\documentclass[11pt]{article}

\usepackage{amsmath}
\usepackage{amssymb}
\usepackage{amsthm}

\usepackage[utf8]{inputenc}
\usepackage[T1]{fontenc}

\usepackage{graphicx} 
\usepackage{tikz}

\usepackage[hidelinks]{hyperref}
\usepackage{cleveref} 
\usepackage{enumitem}
	\setlist[enumerate,itemize]{leftmargin=*}

\usepackage{cite}

\textheight=22.0cm
\textwidth=15.5cm
\hoffset=-1.2cm
\voffset=-1.8cm

\numberwithin{equation}{section}

\theoremstyle{plain}
\newtheorem{theorem}{Theorem}
\newtheorem{proposition}[theorem]{Proposition}
\newtheorem{corollary}[theorem]{Corollary}
\newtheorem{lemma}[theorem]{Lemma}

\theoremstyle{definition}
\newtheorem{definition}[theorem]{Definition}
\newtheorem{remark}[theorem]{Remark}

\numberwithin{theorem}{section}

\usepackage{pgfplots}
\pgfplotsset{compat=1.7}


\def\beq{\begin{equation}}
\def\beql#1{\beq\label{#1}}
\def\eeq{\end{equation}}
\def\beqa{\beq\begin{aligned}}
\def\beqal#1{\beql{#1}\begin{aligned}}
\def\eeqa{\end{aligned}\eeq}
\def\bseq{\begin{subequations}}
\def\bseql#1{\begin{subequations}\label{#1}}
\def\eseq{\end{subequations}}

\def\rb#1{\left(#1\right)}

\def\bigrb#1{\big(#1\big)}

\def\sqb#1{\left[#1\right]}

\def\curb#1{\left\{#1\right\}}
\def\bigcurb#1{\big\{#1\big\}}
\def\Bigcurb#1{\Big\{#1\Big\}}

\def\bigabs#1{\big|#1\big|}

\def\norm#1{\left\|#1\right\|}
\def\bignorm#1{\big\|#1\big\|}

\def\biggnorm#1{\bigg\|#1\bigg\|}
\def\angb#1{\left\langle #1 \right\rangle}
\def\lrang#1{\langle #1 \rangle}

\def\veps{\varepsilon}

\def\G{\Gamma}

\def\lam{\lambda}
\def\Lam{\Lambda}
\def\Om{\Omega}
\def\om{\omega}
\def\Del{\Delta}
\def\del{\delta}
\def\sig{\sigma}
\def\Sig{\Sigma}

\def\ka{\kappa}
\def\vphi{\varphi}

\def\wt{\widetilde}
\def\ol{\overline}

\def\R{\mathbb{R}}

\def\cB{\mathcal{B}}
\def\cC{\mathcal{C}}

\def\cH{\mathcal{H}}
\def\cI{\mathcal{I}}
\def\cJ{\mathcal{J}}
\def\cK{\mathcal{K}}
\def\cL{\mathcal{L}}
\def\cM{\mathcal{M}}

\def\cO{\mathcal{O}}
\def\cP{\mathcal{P}}

\def\cT{\mathcal{T}}

\def\cV{\mathcal{V}}

\def\sbt{\subset}

\def\p{\partial}

\def\what{\widehat}

\def\dvg{\operatorname{\nabla\cdot}}

\def\Span{\operatorname{span}}

\graphicspath{{./figures/}}
\usepackage{caption}
\usepackage{subcaption}
\usepackage{pgfplots}
\usetikzlibrary{plotmarks}
\usepackage{algorithm}
\usepackage[noend]{algorithmic}


\newcommand{\uCoef}{\alpha}
\newcommand{\dist}{\operatorname{dist}}
\newcommand{\supp}{\operatorname{supp}}

\newcommand{\interfaces}{\cM}

\begin{document}

\title{Error Estimates for Adaptive Spectral Decompositions}

\author{%
	Daniel H.\ Baffet\,%
		\footnotemark[1]\, \footnotemark[2]\qquad
	Yannik G.\ Gleichmann\,%
		\footnotemark[1]\, \footnotemark[3]\qquad
	Marcus J.\ Grote\,%
		\footnotemark[1]\, \footnotemark[4]%
}

{%
\renewcommand{\thefootnote}{\fnsymbol{footnote}}
\footnotetext[1]{Department of Mathematics and Computer Science,
		University of Basel, Basel, Switzerland}
\footnotetext[2]{daniel.baffet@unibas.ch}
\footnotetext[3]{yannik.gleichmann@unibas.ch}
\footnotetext[4]{marcus.grote@unibas.ch}
}%

\date{\today}
\maketitle

\begin{abstract}
Adaptive spectral (AS) decompositions associated with a piecewise constant function, $u$, yield small subspaces where the characteristic functions comprising $u$ are well approximated.
When combined with Newton-like optimization methods for the solution of inverse medium problems, AS decompositions have proved remarkably efficient in providing at each nonlinear iteration a low-dimensional search space.
Here, we derive $L^2$-error estimates for the AS decomposition of $u$, truncated after $K$ terms, when $u$ is piecewise constant and consists of $K$ characteristic functions over Lipschitz domains and a background.
Our estimates apply both to the continuous and the discrete Galerkin finite element setting.
Numerical examples illustrate the accuracy of the AS decomposition for media that either do, or do not, satisfy the assumptions of the theory.

\bigskip
\noindent
\textbf{Keywords:} Inverse medium problem, scattering problem, adaptive eigenspace inversion, adaptive spectral decomposition, image segmentation

\end{abstract}

\section{Introduction}
Adaptive spectral (AS) decompositions have been proposed as low-dimensional search spaces during the iterative solution of inverse medium problems \cite{BO2010, BK2013, GKN2017,GN2019,Baffet_2021}.
For piecewise constant media, in particular, AS decompositions have proved remarkably efficient and accurate.
So far, however, their remarkable approximation properties are only supported by numerical evidence.
Here, starting from \cite{Baffet_2021}, we derive $L^2$-error estimates for AS approximations of piecewise constant functions.

In \cite{BO2010}, 
De Buhan and Osses proposed to restrict the search space of an inverse medium problem to the span of a small basis of eigenfunctions of a judicious elliptic operator, repeatedly adapted during the nonlinear iteration.
Their adaptive inversion approach relies on a decomposition
\begin{equation}\label{eq:vExpansion}
	v=\sum_{k=1}^\infty \beta_k\vphi_k ,
\end{equation}
for $v\in W^{1,\infty}_0(\Om)$, with $\Om\sbt\R^d$.
Here each $\vphi_k$ is an eigenfunction of a $v$-dependent, linear, symmetric, and elliptic operator $L_\veps[v]$, i.e.,
\begin{equation}\label{eq:eigenValProb}
	L_\veps[v]\vphi_k =\lam_k \vphi_k
	\quad \text{in $\Omega$,}
	\qquad
	\vphi_k =0
        \quad \text{on $\partial\Omega$,}
\end{equation}
for an eigenvalue $\lam_k\in\R$.
In the sequel we shall in fact apply the AS decomposition to more general
functions in $W^{1,\infty}(\Om)$ by extending their boundary data appropriately into the interior of $\Om$;
here, for simplicity, we suppose $v\in W^{1,\infty}_0(\Om)$.

Clearly, the choice of $L_\veps[v]$ is crucial for obtaining an efficient approximation of $v$ with as few basis functions as possible. Typically, we use
\begin{equation}\label{IntroEq:linear_op}
	L_\veps[v]w=-\dvg\rb{\mu_{\veps}[v]\nabla w} ,
	\qquad
	\mu_{\veps}[v](x)=\frac{1}{\sqrt{|\nabla v(x)|^2+\veps^2}} \, ,
\end{equation}
where $\veps>0$ is a small parameter to avoid division by zero,
but other forms have also been used in the past and are treated by our analysis.

Note that we cannot apply the above AS decomposition directly to piecewise constant $u$, because
$\mu_\veps[u]$ is not in $L^\infty$ and thus $L_\veps[u]$ not well-defined.
Nevertheless, we may still decompose $u$ at the cost of an additional step.
We first approximate $u$ by a more regular approximation, which we denote generically by $u_\del$, where $\del>0$ is a parameter that controls the error and is proportional to the width of the support of $\nabla u_\del$ near the jump discontinuities of $u$.
Then we may expand $u$ (or $u_\del$) in the spectral basis of $L_\veps[u_\del]$ and obtain an approximate decomposition of $u$ (or $u_\del$) by truncating the expansion.
Typically, $u_\del$ corresponds to the standard, continuous, piecewise polynomial FE interpolant of $u$ 
on a regular triangulation with mesh size $\del=h$. Then, the eigenfunctions $\vphi_k$ may correspond either to the (true continuous) eigenfunctions of $L_\veps[u_\del]$ or to their (discrete approximate) Galerkin FE counterparts, 
as our analysis encompasses both the continuous and the discrete setting.

Insight about the AS decomposition approach may be obtained from its connection to the total variation (TV) functional, which is commonly used for image denoising while preserving edges.
In fact, $L_\veps[v]v$, with $L_\veps[v]$ given by \eqref{IntroEq:linear_op}, is the Fréchet derivative of the penalized TV functional -- see \cite[Remark 1]{GKN2017}.
The eigenvalue problem for $L_\veps[v]$ also bears a striking resemblance to nonlinear eigenvalue problems for the TV functional, which have been studied in the more general context of 1-homogeneous functionals for image processing -- see \cite{BGM2016,BCEGM2016,BCN2002} and the references therein.

The AS decomposition has been used as follows in various iterative Newton-like algorithms for the solution of inverse medium problems \cite{BK2013,GKN2017,GN2019}:
Given an approximation of the medium, $u^{(m-1)}$, from the previous iteration, the approximation $u^{(m)}$ at the current iteration is set as the minimizer of the misfit in the space $\Span(\vphi_k)_{k=1}^K$, where $\vphi_k$, $k=1,\ldots,K$, satisfy \eqref{eq:eigenValProb} with $v=u^{(m-1)}$. As the approximation $u^{(m)}$ changes from one iteration to the next, so does the search space used for the subsequent minimization.

By combining the adaptive inversion process with the TRAC (time reversed absorbing condition) approach, de Buhan and Kray~\cite{BK2013} developed an effective solution strategy for time-dependent inverse scattering problems.
In~\cite{GKN2017}, Grote, Kray and Nahum proposed the AEI (adaptive eigenspace inversion) algorithm for inverse scattering problems in the frequency domain.
In \cite{GN2019}, the AEI algorithm was extended to multi-parameter inverse medium problems.
Recently, it was extended to electromagnetic inverse scattering problems at fixed frequency \cite{BD2017} and also to time-dependent inverse scattering problems when the illuminating source is unknown~\cite{GGNA2019}.
In \cite{10.1093/gji/ggaa009}, AS decompositions were used for solving 2-D and 3-D seismic inverse problems for the Helmholtz equation.
First theoretical estimates for AS decompositions together with an approach for adapting the dimension of the search space were derived in \cite{Baffet_2021}.

When $u$ consists of a sum of $K$ characteristic functions $\chi_{A^k}$ of sets $A^k$, each compactly contained in $\Om$, the expansion \eqref{eq:vExpansion} in the spectral basis of $L_\veps[u_\del]$ truncated after $K$ terms has proved remarkably accurate, as it essentially recovers $u$ and in fact decomposes $u$ into the characteristic functions comprising it.
In \cite{Baffet_2021}, it is shown that the gradients of the first $K$ eigenfunctions of $L_\veps[u_\del]$ are small away from the discontinuities of $u$.
Thus, in regions where $u$ is constant, $\vphi_1,\ldots,\vphi_K$ are also nearly constant and we expect that in their span, $\Phi_K^{\veps,\del}=\Span\{\vphi_k\}_{k=1}^K$, $u$ together with each of the characteristic functions comprising it can be well approximated.
Here, our goal is to rigorously prove this proposition, even in the more general situation where $u$ is not necessarily constant near the boundary $\p\Om$.

Starting from \cite{Baffet_2021}, we derive $L^2$ error estimates for the projection of any $v\in u+\Span\{\chi_{A^k}\}$ onto the appropriate affine subspace.
In our main result, given by Theorem \ref{thm:main}, we prove that the $L^2$-projection error of $v$ (in particular of $u$ itself) is bounded by $\cO(\sqrt{\veps+\del})$.
Similarly, we show that any of the $K$ characteristic functions $\chi_{A^k}$ is approximated by its $L^2$-projection on $\Phi_K^{\veps,\del}$ up to $\cO(\sqrt{\veps+\del})$.
Our analysis treats both continuous AS formulations and their discrete Galerkin approximations.
The proof requires a technical result about the level sets of distance functions 
for Lipschitz domains, which is provided in Appendix \ref{sec:level_sets}.
In Corollary \ref{cor:main}, we particularize our estimates for two standard methods for obtaining $u_\del$.
In particular, our results apply when $u_\del$ is a continuous, piecewise polynomial interpolant of $u$ in a FE space $V_h$ with mesh size $h=\del$, and the eigenfunctions $\vphi_k$ are computed numerically by a Galerkin FE approximation in the same subspace.

The remainder of the paper is organized as follows.
In Section \ref{sec:preliminaries}, we describe the class of piecewise constant functions considered, provide definitions and introduce notation.
Section \ref{sec:analysis} contains the analysis and the main results of the paper.
Finally, we present in Section \ref{sec:num_res} various numerical examples which illustrate the accuracy of the AS decomposition for functions that either do, or do not, satisfy the assumptions of our theory.
There we also illustrate the usefulness of the AS decomposition for the solution of a standard linear inverse problem from image deconvolution.

\section{Notation and definitions}\label{sec:preliminaries}

The adaptive spectral (AS) decomposition \eqref{eq:vExpansion} of a function $v$ is based on the spectral decomposition of the $v$-dependent elliptic operator $L_\veps[v]$ given by
\begin{equation}\label{eq:linear_op}
	L_\veps[v]w=-\dvg\rb{\mu_{\veps}[v]\nabla w} .
\end{equation}
Typically, the weight function $\mu_\veps[v]$ has the form of either
\begin{equation}\label{eq:muq}
	\mu_\veps[v](x) =\ \frac{1}{(|\nabla v(x)|^q+\veps^q)^{1/q}} \, ,
\end{equation}
for some $q\in[1,\infty)$, or
\begin{equation}\label{eq:muinf}
	\mu_\veps[v](x) =\ \frac{1}{\max\{|\nabla v(x)|,\, \veps \}} \, .
\end{equation}
For the analysis below, however, we allow for more general $\mu_\veps[v]$.

Here our goal is to study the application of AS decompositions to regular approximations of piecewise constant functions.
Indeed, for a piecewise constant function $u$, $\mu_\veps[u]$ is not in $L^\infty$, and so $L_\veps[u]$ is not well defined.
Nevertheless, we may still decompose $u$ at the cost of an additional step.
We first approximate $u$ by a more regular approximation, which we denote generically by $u_\del$, where $\del$ is a parameter that controls the error and is proportional to the width of the support of $\nabla u_\del$ near the jump discontinuities of $u$.
Then we may expand $u$ (or $u_\del$) in the spectral basis of $L_\veps[u_\del]$, be it finite- or infinite-dimensional, and obtain an approximation by truncating the expansion.
One important example of a method for obtaining $u_\del$ is the standard, continuous, piecewise polynomial interpolant of $u$ in an $H^1$-conforming finite element (FE) space with underlying mesh size $\delta=h$.

To include FE approximations in the analysis, we formulate boundary value problems in closed subspaces $\cV^\del\sbt H^1(\Om)$ and $\cV^\del_0=\cV^\del \cap H^1_0(\Om)$.
Hence, in the continuous setting $\cV^\del = H^1(\Om)$, independently of $\delta$, whereas in the discrete FE setting $\cV^\del \subsetneq H^1(\Om)$ corresponds to the finite-dimensional FE space with underlying mesh size $\delta=h$.
As a consequence, all our results below are valid both for the continuous and the discrete setting and, in particular, for $H^1$-conforming FE approximations.
We let $\angb{\cdot,\cdot}$ and $\|\cdot\|_{L^2(\Om)}$ denote the standard inner product and norm of $L^2(\Om)$, and $|\cdot|$ denote the $\ell^2$-norm.
We use $C$, $C_1,C_2$, etc.\ to denote generic constants which may depend on $u$, but are independent of $\del$ and $\veps$; their values may also vary depending on the context.
We sometimes use the term ``medium'' to refer to functions on the domain of interest $\Om\sbt\R^d$.

In the remainder of this section, we introduce notation, assumptions and definitions needed for our approximation theory in Section \ref{sec:analysis}.
Section \ref{sec:medium} precisely defines the class of piecewise constant functions $u$ to be decomposed.
In Section \ref{sec:reg_approx}, we introduce admissible approximation methods for obtaining $u_\del$ and provide examples of two standard methods which are admissible.
In Section \ref{sec:weight_func}, we state our assumptions on the medium-dependent weight function $\mu_\veps[\cdot]$, and in Section \ref{sec:BVP} we state the boundary-value problems defining the spectral basis of $L_\veps[u_\del]$ and the $L_\veps[u_\del]$-lifting, $\vphi_0$, of the boundary data of $u_\del$ into~$\Om$.

\begin{figure}[t]
\centering
	\includegraphics[width=0.45\linewidth]{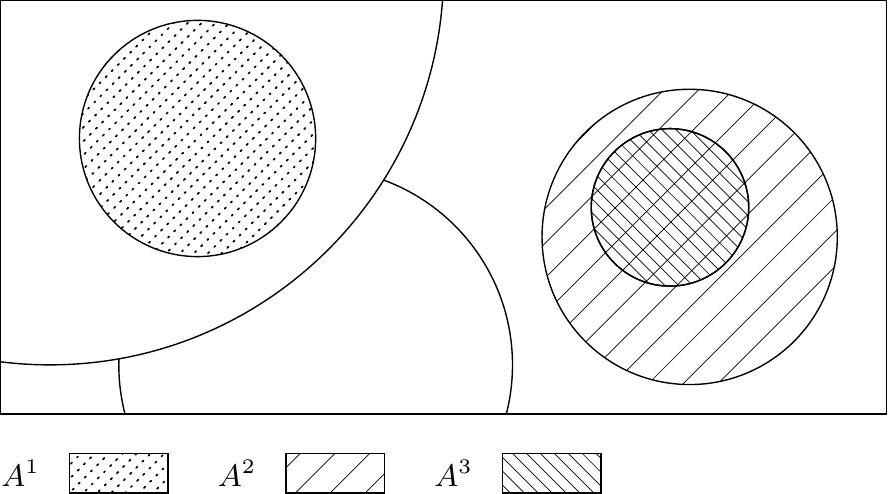} \quad
	\includegraphics[width=0.45\linewidth]{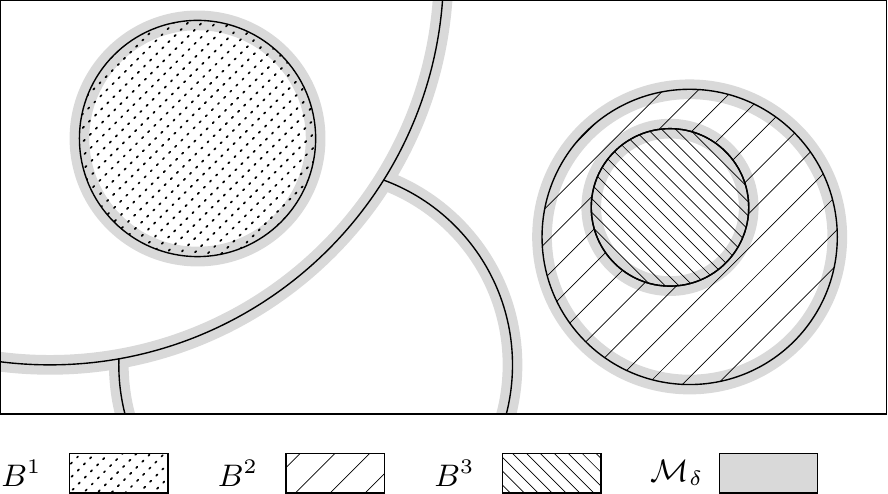}
	\caption{Typical configuration in two dimensions.
		In this example $K=3$ and $M=4$.
		The frame on the left shows the sets $A^1$, $A^2$ and $A^3$, and the frame on the right shows $B^1=A^1$, $B^2=A^2\setminus A^3$, and $B^3=A^3$.}
    \label{fig:2D-Illustration}
\end{figure}

\subsection{Piecewise constant medium}
\label{sec:medium}

Consider $u:\Om\to\R$ piecewise constant, where $\Om\sbt\R^d$, with $d\ge 2$, is a bounded Lipschitz domain.
We assume $u$ has the form
\begin{equation}\label{eq:u_decomp}
	u(x) = u^0(x)+\wt{u}(x) ,
	\qquad
	x\in\Om ,
\end{equation}
where the background $u^0$ and the interior inclusions $\wt u$ are given by
\begin{equation}\label{eq:u0_wtu}
	u^0 = \sum_{m=1}^{M} \om_{m}\chi_{\Om^{m}}\, , \quad \om_{m}\in\R ,
	\qquad
	\wt{u} = \sum_{k=1}^{K} \uCoef_{k}\chi_{A^k}\, , \quad \uCoef_{k}\in\R\setminus\{0\} ,
\end{equation}
with $\chi_{A}$ denoting the characteristic function of a set $A\sbt\R^d$.
In the decomposition \eqref{eq:u_decomp} we distinguish the sets $\Om^m$ connected to the boundary $\p\Om$ from those that are not.
We suppose the sets $\Om^{1},\ldots,\Om^{M}$ characterizing the background $u^0$ are disjoint Lipschitz domains covering $\Om$,
\[
	\ol{\Om} = \bigcup_{m=1}^M \ol{\Om^{m}} ,
\]
and for each $m$, $\p\Om^m\cap\p\Om$ is open in (the relative topology of) $\p\Om$, i.e,
\[
	\Om^m = \Om \cap \wt\Om^m,
	\qquad
	\p\Om \cap \wt\Om^m \ne \emptyset,
\]
for some bounded disjoint Lipschitz domain $\wt\Om^m\sbt\R^d$.
Moreover, we suppose $A^{1},\ldots,A^{K}$ are Lipschitz domains with mutually disjoint boundaries such that for each $k$, the boundary $\p A^{k}$ of $A^{k}$ is connected, and $A^{k}\sbt\sbt \Om^{m}$ for some $m=1,\ldots,M$.
Hence $\Om$ is partitioned into finitely many subdomains $\Om^{m}$ adjacent to its boundary $\p\Om$, while each $\Om^{m}$ may contain one or several inclusions $A^k$ isolated from $\p\Om$;
Figure \ref{fig:2D-Illustration} illustrates a possible configuration in two dimensions.

Note that $u$ given by \eqref{eq:u_decomp} is defined only a.e.\ in $\Om$.
This will be significant only in Section \ref{sec:reg_approx}, where we discuss admissible approximations of $u$; in the rest of the paper this will not cause ambiguity since there we always consider $u$ as an element of $L^2(\Omega)$.

\subsection{Admissible approximation}
\label{sec:reg_approx}

To employ the estimates derived in \cite{Baffet_2021}, we assume $u_\del$ is obtained by an \emph{admissible} method, i.e., by a method satisfying the following.

\begin{definition}\label{def:admiss_approx}
Consider a family of transformations $\cI_\del:L^2(\Om)\to \cV^\del\sbt H^1(\Omega)$, with $\del>0$ in some set of indices.
We say that $\{\cI_\del\}_{\del}$ is an \emph{admissible method}, if for every Lipschitz domain $A\subset \Omega$, the following conditions are satisfied:
\begin{enumerate}
\item
\begin{equation}\label{eq:L2_conv}
	\lim_{\del\to0} \|\cI_\del\chi_{A}-\chi_{A}\|_{L^2(\Om)}=0 .
\end{equation}

\item
\begin{equation}\label{eq:approx_reg}
	\nabla(\cI_\del\chi_{A})\in L^\infty(\Om),
	\quad
	\supp\!\bigrb{\nabla(\cI_\del\chi_{A})} \sbt \ol{U_\del} ,
\end{equation}
where
\[
	U_\del = \bigcurb{x\in\Om\, |\ \dist(x, \p\Om^m\cap\Om)<\del} ,
\]
with $\dist(x,W)$ denoting the distance of $x\in\R^d$ to the set $W\sbt\R^d$.

\item
There exists a constant $C$, such that for every $\del>0$ sufficiently small,
\begin{equation}\label{eq:chi_del_Linf_est}
	\del \|\nabla \cI_\del\chi_A \|_{L^\infty(\Om)}
	\ \le\ C .
\end{equation}

\item
If $\G\sbt\p\Om\setminus\p U_\del$ with positive $(d-1)$-dimensional Hausdorff measure, $\cH^{d-1}(\G)>0$, then the trace of $\chi_A$ on $\G$ coincides with that of $\cI_\del \chi_A$.
\end{enumerate}
\end{definition}

Hence, for convenience, we shall say that $u_\del$ obtained by an admissible method is an admissible approximation of $u$.
By Definition \ref{def:admiss_approx}, we have
\begin{equation}\label{eq:u_del_decomp}
	u_\del \ = \ u^0_\del+\wt{u}_\del ,
	\qquad
	u^0_\del = \cI_\del u^0 \in\cV^\del,
	\quad
	\wt{u}_\del = \cI_\del \wt{u} \in\cV^\del_0 .
\end{equation}
In addition, by \eqref{eq:approx_reg}, we have $\nabla u_\del=0$ in the open complement
\begin{equation}\label{eq:setD}
	D_\del = \Om\setminus \ol{\interfaces_\del} ,
\end{equation}
of the $\del$-wide neighborhood $\interfaces_\del$ of all interfaces,
\begin{equation}\label{eq:setM_del}
	\interfaces_\del = \bigcup_{k=1}^K \bigcurb{x\in\Om :\ \dist(x,\p A^{k})<\del}
		\cup \bigcup_{m=1}^M \bigcurb{x\in\Om :\ \dist(x,\p \Om^{m}\cap\Om)<\del} .
\end{equation}
By \eqref{eq:chi_del_Linf_est}, there exists a constant $C$ (which depends on $u$), such that for every $\del>0$ sufficiently small, $u_\del$ satisfies
\begin{equation}\label{eq:u_del_Linf_est}
	\del \|\nabla u_\del \|_{L^\infty(\Om)} \le C .
\end{equation}

Next we provide two examples \cite[Corollary 6]{Baffet_2021} of standard methods which are admissible.

\begin{figure}[t]
\centering
\includegraphics[width=0.45\linewidth]{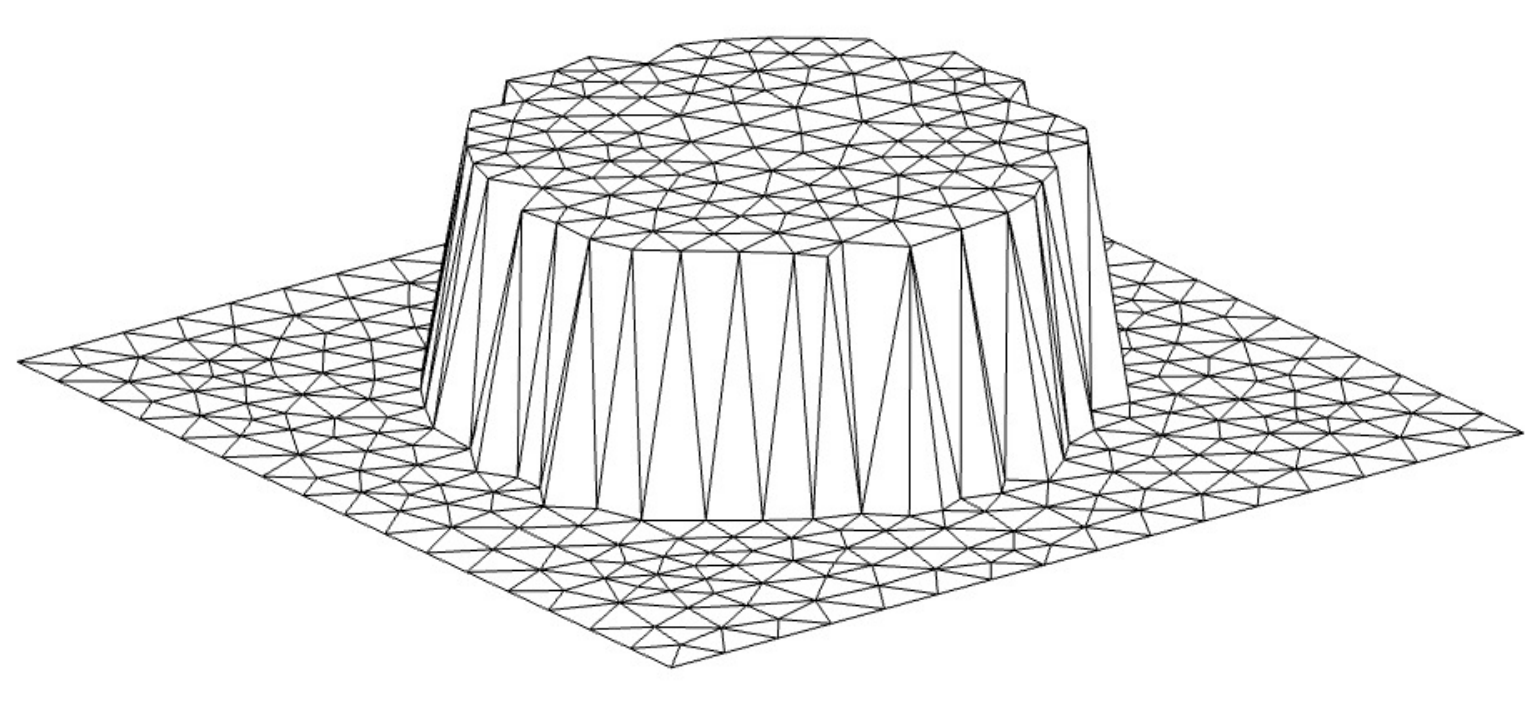}
\hfil\hfil
\includegraphics[width=0.45\linewidth]{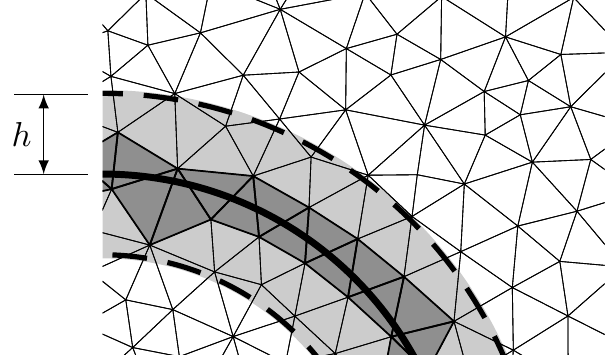}
\caption{The continuous, piecewise linear ${P}^1$-FE interpolant $u_h$ of the characteristic function $u$ for a disk.}
\label{fig:FE_examp}
\end{figure}
\begin{proposition}\label{prop:FE}
Let $u$ be extended to $\ol\Om$ either by assigning to $u$ at any $x$ on the interfaces in $\ol\Om$ one of its values in a neighboring domain $A^k$ or $\Om^m$, or by replacing $A^k$ and $\Om^m$ in \eqref{eq:u0_wtu} by $\ol{A^k}$ and $\ol{\Om^m}$.
For $h>0$, let $V_h$ denote an $H^1$-conforming $\cP^r$-FE space associated with a simplicial mesh $\cT_h$ with mesh size $h$.
If the family of meshes $\{\cT_h\}_{h}$ is regular and quasi-uniform (see, e.g., \cite{Q2008}), then the FE-interpolant $u_h$ of $u$ in $V_h$ is admissible.
\end{proposition}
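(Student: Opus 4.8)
The plan is to verify the four conditions of Definition~\ref{def:admiss_approx} for the family $\cI_h$ of Lagrange (nodal) $\cP^r$-interpolation operators onto $V_h$, with $\del=h$; since $\cI_h$ is linear and $u$ is a finite combination of characteristic functions, it suffices to treat a single $\chi_A$ for an arbitrary Lipschitz domain $A\sbt\Om$. After the pointwise extension of $u$ prescribed in the statement, $\chi_A$ is defined everywhere on $\ol\Om$, so $\cI_h\chi_A\in V_h$ makes sense and all its nodal values lie in $\{0,1\}$. Everything hinges on one elementary local observation: if all Lagrange nodes of a simplex $T\in\cT_h$ carry the same value, then $\cI_h\chi_A$ equals that constant on $T$, so $\cI_h\chi_A=\chi_A$ and $\nabla(\cI_h\chi_A)\equiv 0$ there; while if $T$ carries mixed nodal values, then the segment joining two nodes with different values (contained in $T$ by convexity) passes from $A$ to its complement and hence meets $\p A$, so --- since $\operatorname{diam}T\le h$ --- every point of $T$ lies within distance $h$ of $\p A$. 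Writing $S_h$ for the union of those simplices that meet $\p A$, this gives $\{\cI_h\chi_A\ne\chi_A\}\cup\supp(\nabla\cI_h\chi_A)\sbt S_h$, with $S_h$ contained in the closed $h$-neighbourhood of $\p A$, which is exactly the support statement required by~\eqref{eq:approx_reg}; moreover $\nabla(\cI_h\chi_A)\in L^\infty(\Om)$ since it is piecewise polynomial.

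\textbf{The pointwise and $L^2$ bounds.} For~\eqref{eq:chi_del_Linf_est} I would pass to the reference simplex: on the finite-dimensional space $\cP^r$ the sup-norm and the maximum of the Lagrange nodal values are equivalent norms with constant depending only on $r$ and $d$, hence $\|\cI_h\chi_A\|_{L^\infty(T)}\le C_{r,d}$; combining this with the reference inverse inequality and the affine scaling to $T$ yields $\|\nabla\cI_h\chi_A\|_{L^\infty(T)}\le C\,h_T^{-1}$, and quasi-uniformity ($h_T\ge c\,h$) then gives $h\,\|\nabla\cI_h\chi_A\|_{L^\infty(\Om)}\le C$. For~\eqref{eq:L2_conv}, note $\cI_h\chi_A-\chi_A$ vanishes off $S_h$ and is bounded by $C_{r,d}$ on $S_h$ (using the same norm equivalence and $|\chi_A|\le 1$), so $\|\cI_h\chi_A-\chi_A\|_{L^2(\Om)}^2\le C_{r,d}^2\,|S_h|$. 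It then suffices to show that the volume of the $h$-neighbourhood of $\p A$ tends to $0$ as $h\to 0$; this is classical because $\p A$ is a compact Lipschitz hypersurface and therefore has finite $(d-1)$-dimensional Hausdorff measure, so its $h$-tube has Lebesgue measure $\cO(h)$ --- precisely the type of tubular-neighbourhood bound made quantitative in Appendix~\ref{sec:level_sets}, which I would invoke.

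\textbf{The trace condition, and the main obstacle.} Let $\G\sbt\p\Om\setminus\p U_h$ with $\cH^{d-1}(\G)>0$. Since $\G$ avoids $\p U_h$, it lies in the portion of $\p\Om$ whose distance to $\p A$ is, up to a fixed factor, bounded below by $h$; hence every simplex of $\cT_h$ meeting $\G$ is disjoint from $\p A$, and by the local observation $\cI_h\chi_A=\chi_A$ identically on each such simplex, so their traces coincide on $\G$. (When $A\sbt\sbt\Om$ this is immediate: for $h$ small the closure of $U_h$ does not touch $\p\Om$, so $\G=\p\Om$ and both traces vanish.) The delicate point --- and the step I expect to be the main obstacle --- is the mesh bookkeeping in a one-sided neighbourhood of $\p\Om$ when $\p A$ reaches $\p\Om$: one must calibrate the width of $U_h$ (up to a fixed multiplicative constant, harmless for the estimates) so that ``$T$ meets $\G$ and $\G$ avoids $\p U_h$'' genuinely forces $T\cap\p A=\emptyset$, using $\operatorname{diam}T\le h$ and shape-regularity. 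Everything else is standard finite-element inverse-estimate arithmetic together with the classical vanishing of tubular-neighbourhood volumes; and since the argument used only that the nodal values of $\cI_h\chi_A$ lie in $\{0,1\}$, it applies verbatim to both extensions of $u$ offered in the statement.
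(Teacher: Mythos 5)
Your proposal is correct and follows essentially the same route as the paper: the only condition the paper treats in detail is the scaled $L^\infty$ gradient bound \eqref{eq:chi_del_Linf_est}, which it proves exactly as you do --- by passing to the reference simplex, bounding the interpolant there uniformly (the paper uses that only finitely many $\cP^r$ polynomials take values in $\{0,1\}$ at the reference nodes, where you invoke norm equivalence on $\cP^r$ plus a reference inverse estimate, an inessential difference) and then scaling with $|J_\cK^{-1}|\le Ch^{-1}$ from regularity and quasi-uniformity. The remaining conditions 1, 2 and 4, which you verify via the mixed-nodal-values observation and the vanishing volume of the $h$-tube around $\p A$, are simply declared ``clearly satisfied'' in the paper, so your extra detail (including the trace bookkeeping you flag) goes beyond, rather than diverges from, the published argument.
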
%

\begin{proof}
To prove the proposition, we have to verify the conditions of Definition \ref{def:admiss_approx}.
Most of the conditions, i.e., linearity, and conditions 1,2 and 4 are clearly satisfied.
The only condition which requires careful attention is 3, i.e., \eqref{eq:chi_del_Linf_est}.
The argument of the proof is similar to that of standard inverse inequalities.

Let $u_h$ be the FE-interpolant of the characteristic function $u=\chi_A$ of some set $A\sbt\Om$.
In every element $\cK\in\cT_h$, $u_h$ is the unique polynomial in $\cP^r$ which interpolates the values of $u$ ($0$ or $1$) at the nodes of $\cK$.
By transforming $\cK$ to the (mesh independent) reference element $\hat\cK$, we have
\[
	\nabla u_h(x) = J_\cK^{-T}\nabla P_\cK(F_\cK^{-1}(x))
	\qquad
	x\in\cK ,
\]
where $F_\cK^{-1}:\cK\to\hat\cK$ is the inverse of the affine mapping $F_\cK$ which transforms $\hat\cK$ to $\cK$, $J_\cK\in\R^{d\times d}$ is the Jacobian matrix of $F_\cK$ and $P_\cK\in\cP^r$ is a polynomial taking values of either $0$ or $1$ at the nodes of $\hat\cK$.
Since there is only a finite number of polynomials in $\cP^r$ whose image on the nodes of $\hat\cK$ is a subset of $\{0,1\}$, we can estimate all their gradients in $\hat\cK$ by a single constant independently of the element $\cK$ and mesh size $h$.
In addition, by \cite[Lemma 4.3]{Q2008} and the assumption that the family of meshes $\{\cT_h\}$ is regular and quasi-uniform, we have $\bigabs{J_{\cK}^{-1}} \le C h^{-1}$, where $|\cdot|$ denotes the matrix norm induced on $\R^{d\times d}$ by the $\ell^2$ norm of $\R^d$.
Note that while Lemma 4.3 in \cite{Q2008} is stated and proved in 2-D, its proof extends easily to any dimension.
Hence we obtain \eqref{eq:chi_del_Linf_est} with $\del=h$ which yields that interpolation in $V_h$ is an admissible approximation.
\end{proof}

The main effort in the proof of Proposition \ref{prop:FE} is to show \eqref{eq:chi_del_Linf_est} with $u=\chi_A$ a characteristic function.
Figure \ref{fig:FE_examp} illustrates this situation for the standard interpolant $u_h$ in a $\cP^1$-FE space of the characteristic function $u$ for a disk $A$.
The right frame shows a part of the mesh where the solid black line marks the discontinuity of $u$ along $\p A$.
Outside the dark gray elements, $u$ is constant and therefore so is $u_h$.
In particular, $\nabla u_h=0$ outside the neighborhood of width $\del=h$ (light gray) around $\p A$.

\begin{proposition}\label{prop:conv}
If $u$ is extended to a.e.\ $x\in\R^d$ by
\begin{equation}
	u =\sum_{m=1}^{M} \om_{m}\chi_{\wt{\Om}^{m}}
		+ \sum_{k=1}^{K} \uCoef_{k}\chi_{A^k}
\end{equation}
(compare with \eqref{eq:u_decomp}, \eqref{eq:u0_wtu}), and $u_\del$ is the convolution
\begin{equation}
	u_\del(x) = \zeta_\del* u = \int_{\R^d} \zeta_\del(x-y) u(y)\, d y ,
	\qquad
	\zeta_\del(x)=\del^{-d}\zeta(x/\del)
\end{equation}
with $\zeta$ a standard mollifier (e.g., \cite{EG1992}), then $u_\del$ is admissible.
\end{proposition}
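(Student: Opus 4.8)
The plan is to verify the four conditions of Definition \ref{def:admiss_approx} for the mollification map $\cI_\del = \zeta_\del *(\cdot)$ applied to the characteristic function $\chi_A$ of an arbitrary Lipschitz domain $A\subset\Om$. Throughout, write $r = \sup\{|x| : x\in\supp\zeta\}$, so that $\supp\zeta_\del \subset \overline{B_{r\del}(0)}$; if the standard mollifier is taken supported in the unit ball then $r=1$, but keeping $r$ explicit costs nothing and only rescales $\del$ by a fixed constant, which is harmless for an admissible family. Condition 1 is classical: $\zeta_\del*\chi_A \to \chi_A$ in $L^p_{\mathrm{loc}}$ for every $p<\infty$ as $\del\to0$, and since $A$ is bounded the convergence is in $L^2(\Om)$; this is exactly the approximate-identity property of mollifiers (see \cite{EG1992}). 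Linearity of $\cI_\del$ is immediate from linearity of convolution.

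For condition 2, smoothness of $\zeta_\del*\chi_A$ gives $\nabla(\zeta_\del*\chi_A) = (\nabla\zeta_\del)*\chi_A \in C^\infty \subset L^\infty$. For the support statement, observe that $\nabla(\zeta_\del*\chi_A)(x) = \int \nabla\zeta_\del(x-y)\chi_A(y)\,dy$ vanishes whenever $\chi_A$ is constant on the ball $B_{r\del}(x)$, i.e., whenever $B_{r\del}(x)$ is contained in $A$ or in its complement; hence $\supp(\nabla(\zeta_\del*\chi_A))$ is contained in the set of $x$ with $\dist(x,\p A) \le r\del$. When $\cI_\del$ is applied to the global medium $u$ (extended as in the statement), the gradient is supported within distance $r\del$ of the union of all interfaces $\p A^k$ and $\p\wt\Om^m$; restricted to $\Om$ this lies inside $\overline{U_{r\del}}$ with $U_\del$ as in \eqref{eq:approx_reg}, so condition 2 holds after the harmless rescaling $\del\mapsto r\del$. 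Condition 3 is where the mollifier scaling does the work: $\|\nabla(\zeta_\del*\chi_A)\|_{L^\infty} = \|(\nabla\zeta_\del)*\chi_A\|_{L^\infty} \le \|\nabla\zeta_\del\|_{L^1}\|\chi_A\|_{L^\infty} = \|\nabla\zeta_\del\|_{L^1}$, and a change of variables gives $\|\nabla\zeta_\del\|_{L^1} = \del^{-1}\|\nabla\zeta\|_{L^1}$, so $\del\|\nabla(\zeta_\del*\chi_A)\|_{L^\infty} \le \|\nabla\zeta\|_{L^1} =: C$, independent of $\del$ and of $A$. This is \eqref{eq:chi_del_Linf_est}.

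The one point needing genuine care is condition 4: if $\G\subset\p\Om\setminus\p U_\del$ has $\cH^{d-1}(\G)>0$, then the traces of $\chi_A$ and of $\cI_\del\chi_A$ on $\G$ agree. The idea is that $\G\subset\p\Om\setminus\p U_\del$ forces $\G$ to lie, locally, a positive distance from the interfaces, or more precisely that near $\G$ the extended medium $u$ is constant on a neighborhood of radius $\ge r\del$, so that $\zeta_\del*u = u$ pointwise there and the traces trivially coincide; since the mollified function is continuous up to $\p\Om$ in that region, its trace is just its boundary value. The subtlety is that ``$\G\cap\p U_\del=\emptyset$'' must be turned into ``for a.e. $x$ in a boundary neighborhood of $\G$, $B_{r\del}(x)$ meets no interface'' — this uses that $\p U_\del$ separates the region within distance $\del$ of the interfaces from the region farther away, together with the relative openness of $\p\Om^m\cap\p\Om$ assumed in Section \ref{sec:medium}, which guarantees the relevant boundary pieces are genuine hypersurface portions on which a trace makes sense. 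I expect this trace-matching argument — reconciling the measure-theoretic boundary hypothesis with the pointwise convolution identity — to be the main obstacle; the other three conditions are routine mollifier estimates. One should also note that the extension of $u$ to $\R^d$ in the statement is chosen precisely so that $\zeta_\del*u$ is well-defined and equals $u$ away from a $\del$-neighborhood of the interfaces, which is what makes conditions 2 and 4 go through cleanly.
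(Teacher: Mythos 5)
Your verification is essentially correct, but it is worth noting that the paper does not actually prove Proposition \ref{prop:conv} in-house: its entire proof is a citation of Corollary 6 of \cite{Baffet_2021}. So your self-contained check of the four conditions of Definition \ref{def:admiss_approx} is a genuinely different (more explicit) route, and the individual steps are sound: condition 1 is the standard approximate-identity property; condition 2 follows from $\nabla(\zeta_\del*\chi_A)=(\nabla\zeta_\del)*\chi_A$ together with $\int\nabla\zeta_\del=0$, which kills the gradient wherever $\chi_A$ is constant on the ball $B_{r\del}(x)$; and your Young-inequality scaling $\del\|\nabla(\zeta_\del*\chi_A)\|_{L^\infty(\Om)}\le\|\nabla\zeta\|_{L^1}$ is exactly the right argument for \eqref{eq:chi_del_Linf_est}. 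Two remarks sharpen the places you flagged. First, the support-radius mismatch disappears if you take the standard mollifier supported in the closed unit ball ($r=1$), which is the usual convention in \cite{EG1992}; otherwise your reindexing $\del\mapsto r\del$ is indeed harmless for everything the paper does with admissibility. Second, condition 4 is simpler than you feared: since $U_\del\sbt\Om$ is open and $\G\sbt\p\Om$, a point of $\p\Om$ lies in $\p U_\del$ exactly when it lies in $\ol{U_\del}$, so the hypothesis $\G\sbt\p\Om\setminus\p U_\del$ already forces every point of $\G$ to be at distance at least $\del$ from the interfaces; combined with the extension of $u$ across $\p\Om$ by the $\chi_{\wt\Om^m}$ (which you correctly identified as the reason for the extension in the statement), the extended medium is constant on $B_\del(x)$ for all $x$ near $\G$, so $\zeta_\del*u=u$ pointwise there and the traces coincide with no measure-theoretic subtlety. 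The only caveat to record is the one you implicitly handle: for a Lipschitz domain $A$ whose boundary meets $\p\Om$, mollifying the zero-extension of $\chi_A$ would spoil the trace condition, so the convolution operator must be understood as acting on the prescribed extension (the interior inclusions $A^k\sbt\sbt\Om$ are unproblematic, and the background pieces are extended via $\wt\Om^m$); this is precisely how the proposition, and presumably the cited corollary, intends it.
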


\begin{proof}
See Corollary 6 of \cite{Baffet_2021}.
\end{proof}

For the analysis below it is convenient to partition $D_\del$, given by \eqref{eq:setD}, into its connected components.
Hence, we let the sets $A^1,\ldots,A^K$ be indexed so that if $i>k$, then either $A^i \sbt A^k$ or $A^i \cap A^k=\emptyset$, and let $B^k_\del$ be the connected components of $D_\del$,
\begin{equation}
	B^k_\del = B^k \cap D_\del ,
	\qquad
	B^k = A^k\setminus \bigcup_{i> k} \ol{A^i} ,
	\qquad
	k=1,\ldots,K ;
\end{equation}
see Figure \ref{fig:2D-Illustration}.
Similarly, we define outside the inclusions
\begin{equation}\label{eq:Em_def}
	E^m_\del = E^m \cap D_\del ,
	\qquad
	E^m = \Om^m\setminus \bigcup_{k=1}^K \ol{A^k} ,
	\qquad
	m=1,\ldots,M .
\end{equation}
Here, we assume $\del>0$ sufficiently small so that $B^k_\del$ and $E^m_\del$ are indeed connected and that the $(d-1)$-dimensional Hausdorff measure of $\p E^m_\del\cap\p\Om$ is positive.
Thus, for each $k$ and $\del>0$ small, $B^k$ and $B^k_\del$ are open and connected, and $D_\del$ is given by the disjoint union
\[
	D_\del = E_\del \cup \bigcup_{k=1}^K B^k_\del ,
\]
where $E_\del$ denotes the ``$\del$-exterior'',
\begin{equation}
	E_\del = \bigcup_{m=1}^M E^m_\del .
\end{equation}
Now we may deduce from condition 4 of Definition \ref{def:admiss_approx} that
\begin{equation}\label{eq:u_u_del_E}
	u=u_\del=u^0_\del ,
	\qquad
	\wt u=\wt u_\del=0
	\qquad\text{a.e.\ in $E_\del$.}
\end{equation}

Since we have a finite number of Lipschitz domains, $B^k$ ($k=1,\ldots,K$) and $E^m$ ($m=1,\ldots,M$), we may find a single constant $\Lam>0$ sufficiently large so that each of the sets, near its boundary, locally coincides with the epigraph of a $\Lam$-Lipschitz function.
While the optimal Lipschitz constant for a domain may depend on the scale of the open sets used for covering its boundary, when reducing the scale, the optimal constant cannot increase.
Therefore, if for some scale the Lipschitz constant $\Lam$ is suitable for a domain, for simplicity, we shall say that it is a $\Lambda$-Lipschitz domain.

By Theorem \ref{thm:dist_set}, for every sufficiently small $\del$, each $B^k_\del$ is also a $\Lam$-Lipschitz domain.
Note, however, that since a portion of the boundary of $E^m_\del$ coincides with the boundary of $E^m$ for every $\del$, it does not have the form assumed in Theorem \ref{thm:dist_set}.
As a result, we cannot rely on the same theorem to deduce that $E^m_\del$ is a $\Lam$-Lipschitz domain.
Nevertheless, outside a neighborhood of $\p\Om\cap\p E^m$, the boundary of $E^m_\del$ is a $\Lam$-Lipschitz surface with $E^m_\del$ lying to one of its sides, by Theorem \ref{thm:dist_graph}.
It is therefore possible to modify the definition of $\interfaces_\del$ so that for every $\del$ sufficiently small, $E^m_\del$ given by \eqref{eq:Em_def}, is a $\wt\Lam$-Lipschitz domain, for some $\wt\Lam$ independent of $\del$.
Here, for simplicity, we assume the latter to be true and denote the uniform constant $\max(\Lam,\wt\Lam)$ again by $\Lam$.

\subsection{Medium dependent weight function}
\label{sec:weight_func}

For $\veps>0$ and $v\in H^1(\Om)$, with $\nabla v\in L^{\infty}(\Om)$, we assume the $v$-dependent weight function $\mu_\veps[v]$ has the form
\begin{equation}\label{eq:mu_hatmu}
	\mu_\veps[v](x)=\hat\mu_\veps(|\nabla v(x)|) ,
	\qquad
	x\in\Om ,
\end{equation}
where $\hat\mu_\veps:[0,\infty)\to\R$ is a non-increasing function that satisfies
\begin{equation}\label{eq:mu_cond}
	\hat\mu_\veps(0)=\veps^{-1} ,
	\qquad
	0<\hat\mu_\veps(t) ,
	\quad
	t \hat\mu_\veps(t)\le 1 , \quad t\ge 0 ,
\end{equation}
and
\begin{equation}\label{eq:hat_mu_est}
	\text{$\exists C>0$, s.t.\ for every sufficiently large $t$, $C \le t\hat\mu_\veps(t)$.}
\end{equation}
In particular, for $\hat\mu_\veps(t)=1/(t^q+\veps^q)^{1/q}$ and $\hat\mu_\veps(t)=1/\max(t,\veps)$, as in \eqref{eq:muq} and \eqref{eq:muinf}, respectively, \eqref{eq:mu_cond}-\eqref{eq:hat_mu_est} hold for any $C<1$.
From \eqref{eq:mu_cond}, we immediately conclude that
\begin{equation}\label{eq:REstMu}
	\mu_\veps[v](x)|\nabla v(x)| \le 1 ,
	\qquad
	\text{a.e. $x\in\Om$,}
\end{equation}
and
\begin{equation}\label{eq:LEstMu}
	0 < \hat\mu_\veps(\|\nabla v\|_{L^\infty(\Om)}) \le \mu_\veps[v](x)
	\qquad
	\text{a.e. $x\in\Om$.}
\end{equation}

\subsection{Boundary value problems}
\label{sec:BVP}

Let $\cV^\del$ be a closed subspace of $H^1(\Om)$, possibly equal to $H^1(\Om)$, and $\cV^\del_0= \cV^\del\cap H^1_0(\Om)$.
For sufficiently small and fixed $\del,\veps>0$, the operator $L_\veps[u_\del]$ in \eqref{eq:linear_op} is uniformly elliptic in $\Om$ \cite{Baffet_2021}.
Thus, it admits in $\cV^\del_0$ a (possibly finite) non-decreasing sequence $\{\lam_k\}_{k\ge 1}$ of positive eigenvalues with each repeated according to its multiplicity with corresponding eigenfunctions $\{\vphi_k\}_{k\ge 1}$ which form an $L^2$-orthonormal basis of $\cV^\del_0$.
In addition, we denote by $\vphi_0\in\cV^\del$ the $L_\veps[u_\del]$-lifting of the boundary data of $u_\del$ into $\Om$.
More precisely, we let $\vphi_0\in\cV^\del$ satisfy
\begin{equation}\label{eq:phi0BVP_analysis}
	L_\veps[u_\del]\vphi_0 =0
	\quad \text{in $\Omega$,}
	\qquad
	\vphi_0 = u_\del
	\quad \text{on $\partial\Omega$}
\end{equation}
in $\cV^\del_0$, and for $k\ge1$ we let $\vphi_k\in\cV^\del_0$, $\vphi_k\ne0$ satisfy
\begin{equation}\label{eq:eigenValProb_analysis}
	L_\veps[u_\del]\vphi_k =\lam_k \vphi_k
	\quad \text{in $\Omega$,}
	\qquad
	\vphi_k =0
        \quad \text{on $\partial\Omega$,}
\end{equation}
in $\cV^\del_0$.
Clearly both \eqref{eq:phi0BVP_analysis} and \eqref{eq:eigenValProb_analysis} should be understood in a weak sense with respect to the bilinear form
\begin{equation}\label{eq:eigenValProb_bilinearform}
	B_{\veps,\del}[w,v] = \angb{\mu_{\veps}[u_\del] \nabla w, \nabla v} .
\end{equation}

For instance, if $\cV^\del$ is a (finite-dimensional, $H^1$-conforming) FE space, the eigenvalue problem \eqref{eq:eigenValProb_analysis} is understood as the Galerkin FE formulation: find $\vphi_k\in\cV_0^{\del}$ and $\lam_k\in\R$ such that
\begin{equation}
	B_{\veps,\del}[\vphi_k, \vphi] = \lam_k\angb{\vphi_k,\vphi}
	\qquad
	\forall\, \vphi\in\cV_0^{\del} .
\end{equation}
Thus, the framework above treats both continuous and discrete formulations.

\begin{remark}
Note that $\vphi_k$ ($k\ge0$) and $\lam_k$ ($k\ge1$) always depend on $\veps$ and $u_\del$, and thus on $u$ and $\del$, regardless of any particular finite- or infinite-dimensional choice for $\cV^\del$.
For simplicity of notation, we do not indicate this dependency explicitly.
\end{remark}

\section{Error estimates}\label{sec:analysis}

Given a piecewise constant $u$, we shall now derive our estimates for the AS decomposition of $u_\del$ based on the assumptions and definitions introduced in Section \ref{sec:preliminaries}.
Since $u_\del$ is an admissible approximation of $u$, as defined in Section \ref{sec:reg_approx}, for every $\veps>0$ and every sufficiently small $\del>0$, we have \cite{Baffet_2021}
\begin{equation}\label{eq:est0}
	B_{\veps,\del}[v,v] \le C ,
	\qquad
	v\in\curb{u_\del,\, u^0_\del,\, \wt u_\del,\, \vphi_0,\ldots,\, \vphi_K} .
\end{equation}
Here, and in the rest of the paper, the constants $C,C_1,C_2,\ldots$ may depend on $u$ (i.e., on its values and on the sets $B^k$ and $\Om^m$), but not on $\veps,\del$.
As a consequence of \eqref{eq:est0}, the gradients of $\vphi_k$, with $k=0,\ldots,K$, are small in $D_\del$ \cite[Theorem 5]{Baffet_2021}.
Heuristically, this implies that each $\vphi_k$ is almost constant in regions where $u$ is constant and thus we expect that $u$ be well approximated in $\vphi_0+\Phi_K^{\veps,\del}$, where
\begin{equation}\label{eq:Phi_K_def}
	\Phi_K^{\veps,\del} = \Span\{\vphi_k\}_{k=1}^K .
\end{equation}
Here, our goal is to rigorously prove this proposition.

More precisely, let $\Pi_K^{\veps}[u_\del]$ denote the standard orthogonal projection on $\Phi_K^{\veps,\del}$:
\begin{equation}\label{eq:orthogonal-projector-PK}
	\Pi_K^{\veps}[u_\del]:L^2(\Om)\to\Phi_K^{\veps,\del} ,
	\qquad
	\angb{v-\Pi_K^{\veps}[u_\del]v,\vphi}=0 ,
	\quad
	\forall \, \vphi \in \Phi_K^{\veps,\del} ,
\end{equation}
and let $X_K$ be given by
\begin{equation}\label{eq:X_K_def}
	X_K =\Span\{\chi_{A^k}\}_{k=1}^K = \Span\{\chi_{B^k}\}_{k=1}^K .
\end{equation}
We shall show that every function $v\in u+X_K$ is well approximated in $\vphi_0+\Phi_K^{\veps,\del}$ by its $L^2$-orthogonal projection
\begin{equation}\label{eq:best_L2_affine_QK}
	Q_K^{\veps}[u_\del](v)= \vphi_0+\Pi_K^{\veps}[u_\del](v-\vphi_0) .
\end{equation}
Similarly, we shall show that every $v\in X_K$ is well approximated by its orthogonal projection $\Pi_K^{\veps}[u_\del]v$ on $\Phi_K^{\veps,\del}$.
The main result, given by Theorem \ref{thm:main}, provides estimates of the $L^2$ errors in terms of $\veps$ and~$\del$.


\subsection{Preliminary results}

From \eqref{eq:LEstMu} with $v=u_\del$, the monotonicity of $\hat\mu$, \eqref{eq:u_del_Linf_est} and \eqref{eq:hat_mu_est} we get
\begin{equation}\label{eq:LEstMu_udel}
	0<C\del \le \mu_\veps[u_\del](x)
	\qquad\text{a.e.\ $x\in\Om$}
\end{equation}
for every sufficiently small $\del$, where the constant $C$ may depend on $u$, but is independent of $\del$ and $\veps$.
Since $\nabla u_\del$ vanishes in $D_\del$ by \eqref{eq:approx_reg}, assumptions \eqref{eq:mu_hatmu} and \eqref{eq:mu_cond} on $\hat\mu_\veps$ yield
\begin{equation}\label{eq:mu_eps_D_del}
	\mu_\veps[u_\del](x) = \veps^{-1}
	\qquad\text{a.e.\ $x\in D_\del$.}
\end{equation}
Together with the definition of $B_{\veps,\del}[\cdot,\cdot]$ in \eqref{eq:eigenValProb_bilinearform}, and \eqref{eq:LEstMu_udel} we obtain
\begin{equation}\label{eq:est0.5}
	\veps^{-1}\|\nabla v\|_{L^2(D_\del)}^2 +C_1\del\|\nabla v\|_{L^2(\interfaces_\del)}^2
		\le B_{\veps,\del}[v,v]
\end{equation}
for every $\del>0$ sufficiently small and every $v\in H^1(\Om)$.
By substituting $v=\vphi_k$ in the above and using \eqref{eq:est0} we get
\begin{equation}\label{eq:est1}
	\veps^{-1}\|\nabla\vphi_k\|_{L^2(D_\del)}^2 +C_1\del\|\nabla\vphi_k\|_{L^2(\interfaces_\del)}^2
		\le B_{\veps,\del}[\vphi_k,\vphi_k] \le C .
\end{equation}

Next we employ \eqref{eq:est1} and Poincaré-type inequalities to obtain $L^2$ estimates for $\vphi_k$ in $D_\del$.
To do that we require inequalities with constants independent of $\del$ for the connected components of $D_\del$.
We use Theorems 1 and 2 of \cite{doi:10.1080/03605300600910241} which yield the following:
Let $p\ge 1$ and $\Lam>0$.
There exists a constant $C>0$ such that for every $\Lam$-Lipschitz domain $W\sbt\Om$ and $v\in W^{1,p}(W)$,
\begin{equation}\label{eq:Poin_ineq_avg}
	\|v-\lrang{v}_{W}\|_{L^p(W)} \le C\|\nabla v \|_{L^p(W)} ,
	\qquad
	\forall v\in W^{1,p}(W) ,
\end{equation}
where $\lrang{f}_{W}$ denotes the average of $f$ over $W$,
\begin{equation}
	\lrang{f}_{W} = \frac{1}{\cL(W)} \int_W f(x) dx ,
\end{equation}
with $\cL(W)$ the Lebesgue measure of $W$.
Moreover, if $\G\sbt\ol{\Om}$ has positive $(d-1)$-dimensional Hausdorff measure, then for every $\Lam$-Lipschitz domain $W\sbt\Om$, with $\G\sbt\p W$, and $v\in W^{1,p}(W)$ satisfying $v=0$ on $\G$,
\begin{equation}\label{eq:Poin_ineq0}
	\|v \|_{L^p(W)} \le C\|\nabla v \|_{L^p(W)} .
\end{equation}

\begin{corollary}\label{cor:prelim_est}
There exists a constant $C>0$ such that for every $\veps>0$, $\del>0$ sufficiently small and $1\le j\le K$,
\begin{equation}\label{eq:est_phi_0}
	\|\vphi_0-u^0\|_{L^2(E_\del)}^2 \le C\veps ,
	\qquad
	\|\vphi_0-\lrang{\vphi_0}_{B^j_\del}\|_{L^2(B^j_\del)}^2 \le C\veps
\end{equation}
and
\begin{equation}\label{eq:est_phi_k}
	\|\vphi_k\|_{L^2(E_\del)}^2 \le C\veps ,
	\qquad
	\|\vphi_k-\lrang{\vphi_k}_{B^j_\del}\|_{L^2(B^j_\del)}^2 \le C\veps ,
	\qquad
	k=1,\ldots,K .
\end{equation}
\end{corollary}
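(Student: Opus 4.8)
The plan is to deduce all four bounds from the single gradient estimate obtained by discarding the nonnegative term $C_1\del\|\nabla\vphi_k\|_{L^2(\interfaces_\del)}^2$ in \eqref{eq:est1}, namely $\|\nabla\vphi_k\|_{L^2(D_\del)}^2\le C\veps$ for $k=0,\ldots,K$, and then to apply the Poincaré-type inequalities \eqref{eq:Poin_ineq_avg} and \eqref{eq:Poin_ineq0} separately on each connected component of $D_\del$. Since $D_\del$ is the disjoint union $E_\del\cup\bigcup_{j=1}^K B^j_\del$ and $E_\del=\bigcup_{m=1}^M E^m_\del$, this estimate localizes to $\|\nabla\vphi_k\|_{L^2(B^j_\del)}^2\le C\veps$ and $\|\nabla\vphi_k\|_{L^2(E^m_\del)}^2\le C\veps$, and it then suffices to bound a suitably centered $L^2$-norm of $\vphi_k$ on each piece by its gradient there.

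For the inclusion components, each $B^j_\del$ is a $\Lam$-Lipschitz domain for $\del$ small by Theorem \ref{thm:dist_set}, so \eqref{eq:Poin_ineq_avg} with $p=2$, $W=B^j_\del$, $v=\vphi_k$ gives $\|\vphi_k-\lrang{\vphi_k}_{B^j_\del}\|_{L^2(B^j_\del)}^2\le C\veps$ with $C$ independent of $\del$, since the constant in \eqref{eq:Poin_ineq_avg} is uniform over all $\Lam$-Lipschitz subdomains; this yields the second bound in \eqref{eq:est_phi_0} (for $k=0$) and the second in \eqref{eq:est_phi_k} (for $k\ge1$). For the exterior pieces I would fix a single small $\del_0$ and set $\G^m=\p E^m_{\del_0}\cap\p\Om$, which has positive $\cH^{d-1}$-measure by the standing assumption of Section \ref{sec:reg_approx}; since $E^m_\del$ grows as $\del$ decreases, $\G^m\sbt\p E^m_\del$ for every $\del\le\del_0$, and $E^m_\del$ is itself $\Lam$-Lipschitz for $\del$ small, so \eqref{eq:Poin_ineq0} may be invoked with the \emph{fixed} set $\G^m$, producing a constant uniform in $\del$. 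Applied to $v=\vphi_k$, which vanishes on $\p\Om\supset\G^m$ because $\vphi_k\in\cV^\del_0$, it gives $\|\vphi_k\|_{L^2(E^m_\del)}^2\le C\|\nabla\vphi_k\|_{L^2(E^m_\del)}^2\le C\veps$; summing over $m$ yields the first bound in \eqref{eq:est_phi_k}. For $\vphi_0$ I would use \eqref{eq:u_u_del_E}, which gives $u_\del=u^0=\om_m$ a.e.\ in $E^m_\del$, so that the trace of $u_\del$, hence of $\vphi_0$ (recall $\vphi_0=u_\del$ on $\p\Om$ by \eqref{eq:phi0BVP_analysis}), equals $\om_m$ on $\G^m$; applying \eqref{eq:Poin_ineq0} to $v=\vphi_0-\om_m$ then gives $\|\vphi_0-\om_m\|_{L^2(E^m_\del)}^2\le C\|\nabla\vphi_0\|_{L^2(E^m_\del)}^2\le C\veps$, and summing over $m$, using $u^0=\om_m$ on $E^m_\del$, yields the first bound in \eqref{eq:est_phi_0}.

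The componentwise Poincaré applications are routine; the delicate point, and the one I expect to be the main obstacle, is that $B^j_\del$, $E^m_\del$, and the boundary portion $\p E^m_\del\cap\p\Om$ all vary with $\del$, so one must ensure the Poincaré constants do not degenerate as $\del\to0$. This is precisely why the uniform-in-$\del$ Lipschitz character of $B^j_\del$ and $E^m_\del$ (Theorem \ref{thm:dist_set} and the discussion in Section \ref{sec:reg_approx}) is needed, and why the Dirichlet portion used in \eqref{eq:Poin_ineq0} must be taken as the fixed set $\G^m$ extracted from a single $\del_0$ rather than as the $\del$-dependent set $\p E^m_\del\cap\p\Om$ itself.
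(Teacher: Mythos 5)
Your argument is essentially the paper's own proof: the gradient bound from \eqref{eq:est1}, the uniform-in-$\del$ $\Lam$-Lipschitz character of $B^j_\del$ and $E^m_\del$, the averaged Poincar\'e inequality \eqref{eq:Poin_ineq_avg} on the $B^j_\del$, and the Poincar\'e inequality \eqref{eq:Poin_ineq0} on each $E^m_\del$ with $\eta=\vphi_0-u^0$ (resp.\ $\vphi_k$) vanishing on the boundary portion $\p E^m_\del\cap\p\Om$ via \eqref{eq:u_u_del_E}. Your only deviation -- anchoring the Dirichlet portion to the fixed set $\G^m=\p E^m_{\del_0}\cap\p\Om$ instead of the $\del$-dependent $\p E^m_\del\cap\p\Om$ -- is a harmless (indeed slightly more careful) variant of the same step, so the proposal is correct and follows the paper's route.
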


\begin{proof}
We show \eqref{eq:est_phi_0}; the proof of \eqref{eq:est_phi_k} is similar.
Fix $1\le m\le M$.
Then, for every sufficiently small $\del$, we have $\eta=\vphi_0-u^0 \in H^1(E^m_\del)$ with $\eta=0$ on
\[
	\G^m = \p\Om\cap \p E^m_\del ,
\]
by \eqref{eq:u_u_del_E}.
As $\G^m$ contains an open set in the topology of $\p\Om$, its $(d-1)$-dimensional Hausdorff measure is positive.
Since $E^m_\del$ is $\Lam$-Lipschitz, with $\Lam$ independent of $\del$, by Poincaré \eqref{eq:Poin_ineq0}, there exists $C_1>0$ such that
\begin{equation}
	\|\eta\|_{L^2(E^m_\del)} \le C_1 \| \nabla \eta\|_{L^2(E^m_\del)} .
\end{equation}
Now, we use the above combined with \eqref{eq:est1} and $\nabla u^0=0$ in $E^m_\del$, to obtain
\begin{equation}
	\|\vphi_0-u^0\|_{L^2(E^m_\del)} = \|\eta\|_{L^2(E^m_\del)}
		\le C_1 \| \nabla \vphi_0\|_{L^2(E^m_\del)} \le C_2 \sqrt{\veps} ,
\end{equation}
which proves the first estimate in \eqref{eq:est_phi_0}, since $E_\del$ is the disjoint (finite) union of $E^m_\del$.
The proof of the second estimate in \eqref{eq:est_phi_0} is similar, but relies on \eqref{eq:Poin_ineq_avg} instead of \eqref{eq:Poin_ineq0}; therefore, it is omitted here.
\end{proof}


While Corollary \ref{cor:prelim_est} provides $L^2$ estimates for $\vphi_k$ in the connected components of $D_\del$, the following lemma provides $L^2$ estimates in the neighborhood $\interfaces_\del$ of the discontinuities of $u$.
Especially, it yields that the contribution over $\interfaces_\del$ to the norm of $\vphi_k$ is small.
Note that to deduce this conclusion it is not enough to observe that the volume of $\interfaces_\del$ is small, since the functions $\vphi_k$ themselves depend on $\del$.

\begin{lemma}\label{lem:U_del_est}
There exists a positive constant $C$ such that for every sufficiently small $\veps,\del>0$,
\begin{align}
	\label{eq:U_est_0}
	\|\vphi_0-u^0\|_{L^2(\interfaces_\del)}^2 &\le C\del \\
	\label{eq:U_est_k}
	\|\vphi_k\|_{L^2(\interfaces_\del)}^2 &\le C\del \qquad k=1,\ldots,K.
\end{align}
\end{lemma}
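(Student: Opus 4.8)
The plan is to bound the $L^2$-norm of each $\vphi_k$ (and of $\vphi_0-u^0$) over the interface neighborhood $\interfaces_\del$ by comparing its values there to its average over an adjacent component of $D_\del$, for which Corollary \ref{cor:prelim_est} already controls the deviation, and then to absorb the error incurred in crossing $\interfaces_\del$ using the weighted estimate \eqref{eq:est1}. Concretely, for a fixed component of $\interfaces_\del$ sandwiched between (or adjacent to) components $B^i_\del$, $B^j_\del$, $E^m_\del$ of $D_\del$, I would first enlarge the picture: let $W_\del$ be the union of that piece of $\interfaces_\del$ together with one neighboring component of $D_\del$ (say $B^j_\del$), so that $W_\del$ is again a $\Lam$-Lipschitz domain (using Theorems \ref{thm:dist_set}/\ref{thm:dist_graph} and the uniform Lipschitz constant discussed after \eqref{eq:Em_def}) with $\Lam$ independent of $\del$. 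Apply the Poincaré inequality \eqref{eq:Poin_ineq_avg} on $W_\del$ to $v=\vphi_k$:
\begin{equation*}
	\|\vphi_k-\lrang{\vphi_k}_{W_\del}\|_{L^2(W_\del)}^2 \le C\|\nabla\vphi_k\|_{L^2(W_\del)}^2 = C\bigl(\|\nabla\vphi_k\|_{L^2(\interfaces_\del\cap W_\del)}^2 + \|\nabla\vphi_k\|_{L^2(B^j_\del)}^2\bigr).
\end{equation*}
From \eqref{eq:est1}, the gradient term over $\interfaces_\del$ is $O(\del)$ (via $C_1\del\|\nabla\vphi_k\|_{L^2(\interfaces_\del)}^2\le C$) and the gradient term over $D_\del\supset B^j_\del$ is $O(\veps)$; since $\veps\le \del$ asymptotically is not assumed, I keep both and note the combination is $O(\del)$ once $\del\le 1$ and $\veps$ is small, or more carefully I would track $O(\veps+\del)$ and observe that for the purposes of this lemma (which only claims $O(\del)$) we need $\veps$ small relative to $\del$ — alternatively, and more cleanly, I would carry the estimate as $\|\vphi_k-\lrang{\vphi_k}_{W_\del}\|_{L^2(\interfaces_\del)}^2\le C\del$ using that the right side of \eqref{eq:est1} is a $\del$-independent constant so dividing by $C_1\del$ gives $\|\nabla\vphi_k\|_{L^2(\interfaces_\del)}^2\le C/\del$, hence multiplying back... this is exactly the subtlety below.

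Next I would convert the $\lrang{\vphi_k}_{W_\del}$-centered bound into one involving the constant I actually want. Since $\lrang{\vphi_k}_{W_\del}$ differs from $\lrang{\vphi_k}_{B^j_\del}$ by at most $C\|\vphi_k-\lrang{\vphi_k}_{B^j_\del}\|_{L^2(B^j_\del)}/\sqrt{\cL(B^j_\del)}$, and $\cL(B^j_\del)$ is bounded below uniformly in $\del$ (as $B^j_\del\to B^j$), Corollary \ref{cor:prelim_est} gives $|\lrang{\vphi_k}_{W_\del}-\lrang{\vphi_k}_{B^j_\del}|^2\le C\veps$. For the component of $\interfaces_\del$ touching $\p\Om$, I would instead use the boundary Poincaré inequality \eqref{eq:Poin_ineq0} on the enlarged $\Lam$-Lipschitz set $W_\del=$ (that piece of $\interfaces_\del$) $\cup\, E^m_\del$, where $\vphi_k=0$ on $\G^m=\p\Om\cap\p E^m_\del$ (for $\vphi_0$ one uses $\vphi_0=u^0=u_\del$ on $\G^m$ by \eqref{eq:u_u_del_E}, so $\vphi_0-u^0$ vanishes there), which directly yields $\|\vphi_k\|_{L^2(\interfaces_\del\cap W_\del)}^2\le C(\|\nabla\vphi_k\|_{L^2(\interfaces_\del)}^2+\|\nabla\vphi_k\|_{L^2(E^m_\del)}^2)$. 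Summing over the finitely many components of $\interfaces_\del$ and of $D_\del$ gives \eqref{eq:U_est_0}–\eqref{eq:U_est_k}. The estimate for $\vphi_0-u^0$ is analogous, noting $\nabla u^0=0$ away from $\p\Om^m\cap\Om$ and that $u^0$ is a bounded function so its contribution over $\interfaces_\del$ (which has volume $O(\del)$) is itself $O(\del)$.

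The main obstacle — and the point requiring the most care — is the interplay of the two scales $\veps$ and $\del$ in \eqref{eq:est1}: the inequality reads $\veps^{-1}\|\nabla\vphi_k\|_{L^2(D_\del)}^2+C_1\del\|\nabla\vphi_k\|_{L^2(\interfaces_\del)}^2\le C$, so the gradient over $\interfaces_\del$ is only controlled by $C/\del$, which is \emph{large}. Thus one cannot naively bound $\|\nabla\vphi_k\|_{L^2(\interfaces_\del)}^2$ by a small quantity; instead the $O(\del)$ in the conclusion must come from the \emph{Poincaré inequality on the thin set together with the vanishing/near-constancy of $\vphi_k$ inherited from $D_\del$}. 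The right way to extract it, I expect, is: on the enlarged domain $W_\del$, Poincaré gives $\|\vphi_k-\lrang{\vphi_k}_{W_\del}\|_{L^2(W_\del)}\le C\|\nabla\vphi_k\|_{L^2(W_\del)}$, and then the \emph{key geometric input} is that the Poincaré constant of $W_\del$ stays bounded as $\del\to0$ (uniform Lipschitz character), so that one really is comparing $\|\vphi_k\|_{L^2(\interfaces_\del)}$ against $\|\nabla\vphi_k\|_{L^2(\interfaces_\del)}$ scaled by the width $\del$ of $\interfaces_\del$ — i.e. the correct localized Poincaré inequality on a thin tube of width $\del$ has constant $O(\del)$, giving $\|\vphi_k-\lrang{\vphi_k}\|_{L^2(\interfaces_\del)}^2\le C\del^2\|\nabla\vphi_k\|_{L^2(\interfaces_\del)}^2\le C\del^2\cdot(C/\del)=C\del$. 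Making this thin-tube Poincaré constant rigorous and uniform — presumably again via Appendix \ref{sec:level_sets} and the structure of $\interfaces_\del$ as a $\del$-neighborhood of finitely many Lipschitz surfaces — is the crux of the argument.
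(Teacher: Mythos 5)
You have correctly identified the central difficulty — that \eqref{eq:est1} only gives $\|\nabla\vphi_k\|_{L^2(\interfaces_\del)}^2 \le C/\del$, which is \emph{large}, so any estimate with an $O(1)$ Poincaré constant (e.g., on the enlarged domain $W_\del$) is useless — and you have also correctly guessed the scaling that must save the day: a thin-tube inequality with constant $O(\del^2)$ in front of the gradient term, so that $\del^2 \cdot (C/\del) = C\del$. However, the specific tool you propose (a \emph{mean-zero} Poincaré on the collar $\interfaces_\del$, i.e.\ $\|\vphi_k - \lrang{\vphi_k}_{\interfaces_\del}\|_{L^2(\interfaces_\del)}^2 \le C\del^2\|\nabla\vphi_k\|_{L^2(\interfaces_\del)}^2$) does not close the argument: you would still need to control the term $\cL(\interfaces_\del)\,\lrang{\vphi_k}_{\interfaces_\del}^2$. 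By Cauchy--Schwarz that term is bounded only by $\|\vphi_k\|_{L^2(\interfaces_\del)}^2$ itself, which is circular; and your alternative device (comparing $\lrang{\vphi_k}_{W_\del}$ with $\lrang{\vphi_k}_{B^j_\del}$) again funnels through a Poincaré inequality on a domain of $O(1)$ diameter, whose constant does not shrink with $\del$, and so reintroduces the $C/\del$ you were trying to avoid.

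What the paper does instead is apply Theorem \ref{thm:gen_est_U} (Appendix \ref{sec:thin_domains}), which is a Robin-type, not a mean-zero, estimate: for a $\Lam$-Lipschitz $A$ with one-sided collar $U_\del$ and bulk $A_\del = A\setminus\ol{U_\del}$,
\begin{equation*}
	\|v\|_{L^2(U_\del)}^2 \le C\bigrb{\del^{2}\|\nabla v\|_{L^2(U_\del)}^2 + \del\,\|v\|_{H^1(A_\del)}^2}.
\end{equation*}
This bounds the \emph{full} $L^2$ norm on the thin collar, with the ``boundary datum'' supplied by a trace onto the adjacent bulk $A_\del\subset D_\del$ rather than by an average. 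No average over the collar ever appears, which is what removes the circularity. Plugging $v=\vphi_k$, $W=B^j$ or $\Om^m$, one uses $\|\nabla\vphi_k\|_{L^2(U_\del)}^2\le C/\del$ (from \eqref{eq:est1}) for the first term, and $\|\vphi_k\|_{H^1(D_\del)}^2\le 1 + C\veps$ (from $\|\vphi_k\|_{L^2(\Om)}=1$ and \eqref{eq:est1}) for the second, giving $C\del(1+\veps)\le C\del$; summing over the finite family of $W$'s covers $\interfaces_\del$. Theorem \ref{thm:gen_est_U} in turn rests on Lemma \ref{lem:Poi_f-cyl}, a Robin eigenvalue bound on a flattened cylinder, not on a mean-zero Poincaré. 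Finally, for \eqref{eq:U_est_0} you also need $\|\vphi_0-u^0\|_{L^2(\Om)}$ bounded independently of $\veps,\del$ (since $\vphi_0$ is not $L^2$-normalized); that is Lemma \ref{lem:phi_0_L2_est}, which itself depends on Theorem \ref{thm:lam_1_lower_est} and hence on \eqref{eq:U_est_k}, and the paper explicitly flags this logical ordering. Your sketch does not address this dependence.
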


\begin{proof}
Here we show only \eqref{eq:U_est_k}.
We include estimate \eqref{eq:U_est_0} here only for brevity; its proof is similar, though it requires Lemma \ref{lem:phi_0_L2_est}.
Thus, the correct order of our argument is \eqref{eq:U_est_k}, Lemma \ref{prop:avg_inv}, Theorem \ref{thm:lam_1_lower_est}, Lemma \ref{lem:phi_0_L2_est}, and then \eqref{eq:U_est_0}.
Note that by \eqref{eq:U_est_0} we have that $\vphi_0$ also satisfies \eqref{eq:U_est_k}.

Fix $1\le k\le K$, let $W=B^j$ for some $j=1,\ldots,K$ or $W=\Om^{m}$ for some $m=1,\ldots,M$, and let
\begin{equation}\label{eq:U_est_proof_1_U}
	U_\del =\curb{x\in W :\ \dist(x,\p W)<\del} .
\end{equation}
By Theorem \ref{thm:gen_est_U} we have
\begin{equation}\label{eq:U_est_proof_1}
	\|\vphi_k\|_{L^2(U_\del)}^2
		\le C\rb{\del^{2}\|\nabla \vphi_k\|_{L^2(U_\del)}^2
			+\del\|\vphi_k\|_{H^1(D_\del)}^2} .
\end{equation}
By using $\|\vphi_k\|_{L^2(\Om)}=1$ and \eqref{eq:est1}, we estimate the right hand side of \eqref{eq:U_est_proof_1} and thus for $\del,\veps$ sufficiently small obtain
\begin{equation}
	\|\vphi_k\|_{L^2(U_\del)}^2 \le C_1\del\rb{1+\veps}\le C\del .
\end{equation}
Since $\interfaces_\del$ is a subset of the finite union of all $\ol{U_\del}$, we obtain \eqref{eq:U_est_k} which completes the proof.
\end{proof}


Following Corollary \ref{cor:prelim_est} and Lemma \ref{lem:U_del_est} we know that $\vphi_1,\ldots,\vphi_K$ are approximately piecewise constant, and that the contributions over $\interfaces_\del$ to their $L^2$-norms are small.
This implies that each $\vphi_k$ is close to some function in $X_K$.
As we shall see in Theorem \ref{thm:main}, the converse is also true; i.e., every function in $X_K$ can be well approximated in $\Phi_K^{\veps,\del}$.
Here -- because in each $B^k_\del$, $\vphi_1,\ldots,\vphi_K$ are close to their averages -- this proposition reduces to the invertibility of the matrix of the averages $\lrang{\vphi_j}_{B^k_\del}$.

\begin{lemma}\label{prop:avg_inv}
Let the matrix $\Sig\in\R^{K\times K}$ be given by
\begin{equation}\label{eq:avg_mat}
	\Sig=(\sig_{kj}) ,
	\qquad
	\sig_{kj} =\lrang{\vphi_j}_{B^k_\del} ,
		\quad k,j=1,\ldots,K .
\end{equation}
There exist constants $0<C_1\le C_2$ such that for every sufficiently small $\del$ and $\veps$,
\begin{equation}\label{eq:avg_mat_est}
	C_1|\beta| \le |\Sig\beta| \le C_2|\beta| ,
	\qquad
	\beta\in\R^K .
\end{equation}
\end{lemma}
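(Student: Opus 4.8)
The matrix $\Sig$ is nothing but the matrix that expresses the (approximately piecewise constant) functions $\vphi_1,\dots,\vphi_K$ in the basis $\{\chi_{B^k_\del}\}$, so the point is that the $L^2$-orthonormality of $\{\vphi_j\}$ should force a two-sided bound on $\Sig$, up to the $\cO(\sqrt\veps+\del)$ slack coming from Corollary~\ref{cor:prelim_est} and Lemma~\ref{lem:U_del_est}. Concretely, fix $\beta\in\R^K$ and set $\psi=\sum_{j=1}^K\beta_j\vphi_j$; then $\|\psi\|_{L^2(\Om)}^2=|\beta|^2$ by orthonormality, while $\lrang{\psi}_{B^k_\del}=\sum_{j}\beta_j\lrang{\vphi_j}_{B^k_\del}=(\Sig\beta)_k$. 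The plan is to show that $\psi$ is $\cO(\sqrt\veps+\del)$-close in $L^2(\Om)$ to the piecewise constant function $\sum_k(\Sig\beta)_k\chi_{B^k_\del}$, whose squared norm is $\sum_k|(\Sig\beta)_k|^2\cL(B^k_\del)$, and then to read off \eqref{eq:avg_mat_est} using that the measures $\cL(B^k_\del)$ are bounded between two positive constants uniformly in $\del$.

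First I would record that $\cL(B^k_\del)\to\cL(B^k)>0$ as $\del\to0$: since $\interfaces_\del$ shrinks to a finite union of Lipschitz surfaces, $\cL(\interfaces_\del)\to0$, and $B^k_\del=B^k\setminus(B^k\cap\ol{\interfaces_\del})$, so $\cL(B^k_\del)=\cL(B^k)-\cL(B^k\cap\ol{\interfaces_\del})\to\cL(B^k)$. Hence there are constants $0<\ell\le L$ with $\ell\le\cL(B^k_\del)\le L$ for all $k$ and all sufficiently small $\del$. Next, on each $B^k_\del$, the second estimate of \eqref{eq:est_phi_k} gives $\|\vphi_j-\lrang{\vphi_j}_{B^k_\del}\|_{L^2(B^k_\del)}^2\le C\veps$, so by the triangle inequality $\|\psi-(\Sig\beta)_k\|_{L^2(B^k_\del)}\le\sqrt{K}\,|\beta|\sqrt{C\veps}$; combining this with $\|\psi\|_{L^2(B^k_\del)}\le|\beta|$ yields the a priori bound $|(\Sig\beta)_k|\sqrt{\cL(B^k_\del)}=\|(\Sig\beta)_k\|_{L^2(B^k_\del)}\le 2|\beta|$ for $\veps$ small, and also
\[
	\bigl|\,\|\psi\|_{L^2(B^k_\del)}^2-|(\Sig\beta)_k|^2\cL(B^k_\del)\,\bigr|
	\ \le\ \sqrt{K C\veps}\,|\beta|\bigl(2|(\Sig\beta)_k|\sqrt{\cL(B^k_\del)}+\sqrt{K C\veps}\,|\beta|\bigr).
\]

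Then I would control the complement of $\bigcup_k B^k_\del$ in $\Om$. By the first estimate in \eqref{eq:est_phi_k} and Cauchy--Schwarz, $\|\psi\|_{L^2(E_\del)}^2\le KC\veps\,|\beta|^2$, and by Lemma~\ref{lem:U_del_est}, $\|\psi\|_{L^2(\interfaces_\del)}^2\le KC\del\,|\beta|^2$. Since $\Om$ is, up to a null set, the disjoint union of $E_\del$, $\interfaces_\del$ and the $B^k_\del$, we get $\sum_k\|\psi\|_{L^2(B^k_\del)}^2=|\beta|^2-\|\psi\|_{L^2(E_\del)}^2-\|\psi\|_{L^2(\interfaces_\del)}^2$, hence $\bigl|\sum_k\|\psi\|_{L^2(B^k_\del)}^2-|\beta|^2\bigr|\le KC(\veps+\del)|\beta|^2$. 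Summing the per-component estimate above over $k$ and using $\sum_k|(\Sig\beta)_k|\sqrt{\cL(B^k_\del)}\le 2K|\beta|$ gives $\bigl|\sum_k|(\Sig\beta)_k|^2\cL(B^k_\del)-|\beta|^2\bigr|\le C(\sqrt\veps+\del)|\beta|^2$. For $\veps,\del$ small enough the right-hand side is at most $\tfrac12|\beta|^2$, so $\tfrac12|\beta|^2\le\sum_k|(\Sig\beta)_k|^2\cL(B^k_\del)\le\tfrac32|\beta|^2$; dividing by $\cL(B^k_\del)\in[\ell,L]$ yields $|\Sig\beta|^2\ge\tfrac{1}{2L}|\beta|^2$ and $|\Sig\beta|^2\le\tfrac{3}{2\ell}|\beta|^2$, which is \eqref{eq:avg_mat_est} with suitable $C_1,C_2$.

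\textbf{Main obstacle.} There is no deep difficulty here; the two things that need care are (i) that $\cL(B^k_\del)$ stays bounded away from $0$ uniformly in $\del$ — this rests on $\cL(B^k)>0$ (which follows from $B^k$ being a nonempty open set, using the nesting/disjointness ordering of the $A^i$) and on $\cL(\interfaces_\del)\to0$ — so that the Euclidean norm $|\Sig\beta|$ and the weighted sum $\sum_k|(\Sig\beta)_k|^2\cL(B^k_\del)$ are comparable; and (ii) keeping honest track of the cross terms when passing from $\|\psi\|_{L^2(B^k_\del)}$ to $|(\Sig\beta)_k|^2\cL(B^k_\del)$, so that the accumulated perturbation is genuinely $\cO(\sqrt\veps+\del)$ and can be absorbed. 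Everything else is the triangle inequality together with Corollary~\ref{cor:prelim_est} and Lemma~\ref{lem:U_del_est}.
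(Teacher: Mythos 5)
Your proof is correct and takes essentially the same route as the paper's: both decompose $\|\psi\|_{L^2(\Om)}^2$ over $E_\del$, $\interfaces_\del$, and the $B^k_\del$, invoke Corollary~\ref{cor:prelim_est} and Lemma~\ref{lem:U_del_est}, and compare $\|\psi\|_{L^2(B^k_\del)}^2$ to $\cL(B^k_\del)(\Sig\beta)_k^2$ (the paper via the exact Pythagorean identity for the zero-average part, you via a slightly looser triangle-inequality estimate). You additionally write out the upper bound in \eqref{eq:avg_mat_est}, which the paper dismisses as ``simple'' and omits.
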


\begin{proof}
Since the upper estimate in \eqref{eq:avg_mat_est} is simple, here we only show the lower estimate $C_1|\beta|\le |\Sig\beta|$, for some positive constant $C_1$ independent of $\beta$, $\veps$, and $\del$.
Let $\beta\in\R^K$ with $|\beta|=1$ and $\vphi\in\Phi_K^{\veps,\del}$ be given by
\begin{equation}\label{eq:zero_avgs}
	\vphi =\sum_{j=1}^K \beta_j\vphi_j .
\end{equation}
Then, we have
\begin{equation}\label{eq:avg_mat_phi}
	(\Sig\beta)_k = \lrang{\vphi}_{B^k_\del} ,
	\qquad
	k=1,\ldots,K ,
\end{equation}
where $(\Sig\beta)_k$ denotes the $k$-th entry of $\Sig\beta$.
Since $\vphi_1,\ldots,\vphi_K$ are orthonormal and $|\beta|=1$, we get
\begin{equation}
	\label{eq:avg_phi_norm}
	1=\|\vphi\|_{L^2(\Om)}^2 =\|\vphi\|_{L^2(E_\del)}^2+\|\vphi\|_{L^2(\interfaces_\del)}^2
		+\sum_{k=1}^K \|\vphi\|_{L^2(B^k_\del)}^2 .
\end{equation}
Due to \eqref{eq:avg_mat_phi}, the function $\vphi -(\Sig\beta)_k$ has zero average over $B^k_\del$ and is, therefore, orthogonal to the constant in $L^2(B^k_\del)$.
Thus,
\begin{equation}
	\|\vphi\|_{L^2(B^k_\del)}^2 = \|\vphi -(\Sig\beta)_k\|_{L^2(B^k_\del)}^2 +\cL(B^k_\del)(\Sig\beta)_k^2 ,
	\qquad
	k=1,\ldots, K .
\end{equation}
By Poincaré's inequality \eqref{eq:Poin_ineq_avg},
\begin{equation}
	\|\vphi -(\Sig\beta)_k\|_{L^2(B^k_\del)}^2 \le C\|\nabla \vphi\|_{L^2(B^k_\del)}^2
\end{equation}
and by the triangle inequality and \eqref{eq:est1}, we have
\begin{equation}
	\|\nabla \vphi\|_{L^2(B^k_\del)} \le \sum_{j=1}^K |\beta_j|\|\nabla\vphi_j\|_{L^2(B^k_\del)}
		\le C\sqrt{\veps} .
\end{equation}
We further estimate $\|\vphi\|_{L^2(E_\del)}$ and $\|\vphi\|_{L^2(\interfaces_\del)}$ in \eqref{eq:avg_phi_norm} using \eqref{eq:est_phi_k} and \eqref{eq:U_est_k}, and use that $B^k_\del\sbt B^k$, to obtain
\begin{equation}
	1 \le C(\veps+\del) +\sum_{k=1}^K \cL(B^k_\del)(\Sig\beta)_k^2
		\le C(\veps+\del) +\max_{k} \cL(B^k)\, |\Sig\beta|^2 .
\end{equation}
Thus, for every $\del$ and $\veps$ sufficiently small,
\begin{equation}
	\wt{C} \le \max_{k} \cL(B^k) \, |\Sig\beta|^2
\end{equation}
which completes the proof.
\end{proof}

\subsection{Main results}

Next we show that if $\veps,\del>0$ are sufficiently small, then the first eigenvalue of $L_\veps[u_\del]$ is bounded from below by a constant independent of $\veps,\del$.

\begin{theorem}\label{thm:lam_1_lower_est}
There exists a positive constant $C$ such that for every $\veps,\del>0$ sufficiently small and for every $v\in H^1_0(\Om)$,
\begin{equation}\label{eq:lam_1_lower_est}
	C\|\nabla v\|_{L^1(\Om)} \le \sqrt{B_{\veps,\del}[v,v]} ,
	\qquad
	C\|v\|_{L^2(\Om)}^2 \le B_{\veps,\del}[v,v] .
\end{equation}
In particular, the second estimate yields $\lam_1\ge C>0$.
\end{theorem}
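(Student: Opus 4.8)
The two inequalities in \eqref{eq:lam_1_lower_est} are equivalent up to constants, so I would focus on the $L^1$-gradient bound
\[
	C\|\nabla v\|_{L^1(\Om)} \le \sqrt{B_{\veps,\del}[v,v]}
\]
and then recover the $L^2$ bound by combining it with the Sobolev embedding $W^{1,1}(\Om)\hookrightarrow L^{d/(d-1)}(\Om)\supset L^2(\Om)$ (valid since $d\ge2$) together with the Poincaré inequality \eqref{eq:Poin_ineq0} on $\Om$ itself (which is a $\Lam$-Lipschitz domain); the eigenvalue statement $\lam_1\ge C$ is then immediate by taking $v=\vphi_1$ with $\|\vphi_1\|_{L^2(\Om)}=1$ in the Rayleigh quotient.

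**Main step: the $L^1$ estimate.** Split $\|\nabla v\|_{L^1(\Om)} = \|\nabla v\|_{L^1(D_\del)} + \|\nabla v\|_{L^1(\interfaces_\del)}$. On $D_\del$, where $\mu_\veps[u_\del]=\veps^{-1}$ by \eqref{eq:mu_eps_D_del}, Cauchy--Schwarz gives $\|\nabla v\|_{L^1(D_\del)} \le \cL(D_\del)^{1/2}\|\nabla v\|_{L^2(D_\del)} \le C\,\veps^{1/2}\|\nabla v\|_{L^2(D_\del)} = C(\veps^{-1}\|\nabla v\|_{L^2(D_\del)}^2)^{1/2}\cdot\veps \le C\,\veps\,\sqrt{B_{\veps,\del}[v,v]}$ by \eqref{eq:est0.5}; for $\del,\veps$ small this term is harmless. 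The real work is the contribution over $\interfaces_\del$. Here I would use Cauchy--Schwarz the other way: $\|\nabla v\|_{L^1(\interfaces_\del)} \le \cL(\interfaces_\del)^{1/2}\|\nabla v\|_{L^2(\interfaces_\del)}$. Since $\interfaces_\del$ is a $\del$-neighborhood of finitely many Lipschitz surfaces, $\cL(\interfaces_\del)\le C\del$, and by \eqref{eq:est0.5} we have $\|\nabla v\|_{L^2(\interfaces_\del)}^2 \le (C_1\del)^{-1}B_{\veps,\del}[v,v]$, so $\|\nabla v\|_{L^1(\interfaces_\del)} \le C\del^{1/2}\cdot(C_1\del)^{-1/2}\sqrt{B_{\veps,\del}[v,v]} = C\sqrt{B_{\veps,\del}[v,v]}$, with a constant that does \emph{not} degenerate as $\del\to0$. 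Adding the two pieces yields the claimed $L^1$ inequality for all sufficiently small $\veps,\del$.

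**Obstacle and loose ends.** The one point that needs care is the measure bound $\cL(\interfaces_\del)\le C\del$ with $C$ independent of $\del$: this is where the Lipschitz regularity of the interfaces $\p A^k$ and $\p\Om^m\cap\Om$ (with a uniform Lipschitz constant $\Lam$, as arranged in Section \ref{sec:reg_approx}) enters, essentially via a tubular-neighborhood/coarea estimate — the level-set results of Appendix \ref{sec:level_sets} (Theorems \ref{thm:dist_set}, \ref{thm:dist_graph}) give exactly the structure needed, so I would cite those. A secondary subtlety is that \eqref{eq:est0.5} as stated holds for $v\in H^1(\Om)$, which covers $v\in H^1_0(\Om)$; and passing from $L^{d/(d-1)}$ to $L^2$ on the bounded domain $\Om$ is immediate since $d/(d-1)\le 2$ for $d\ge 2$, using Hölder. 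Everything else is bookkeeping; I do not anticipate any genuinely hard analytic step, since the key mechanism — trading the small volume $\cL(\interfaces_\del)\sim\del$ against the blow-up $\mu_\veps[u_\del]^{-1}\lesssim \del^{-1}$ on $\interfaces_\del$ in the bilinear form — is built into \eqref{eq:est0.5}.
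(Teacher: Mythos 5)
Your treatment of the first inequality is essentially the paper's argument: split $\Om$ into $D_\del$ and $\interfaces_\del$, apply Cauchy--Schwarz using $\cL(\interfaces_\del)\le C\del$ (the paper gets this from Lemma 4 of \cite{Baffet_2021} rather than Appendix \ref{sec:level_sets}) and the boundedness of $\cL(D_\del)$, and absorb both pieces via \eqref{eq:est0.5}. One harmless slip: $\cL(D_\del)^{1/2}$ is bounded by a constant, not by $C\veps^{1/2}$, so the $D_\del$ term is $\le C\sqrt{\veps}\,\sqrt{B_{\veps,\del}[v,v]}$ rather than $C\veps\sqrt{B_{\veps,\del}[v,v]}$; the conclusion is unaffected.

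The genuine gap is in the second inequality. The two estimates in \eqref{eq:lam_1_lower_est} are not equivalent, and your passage from the $L^1$-gradient bound to the $L^2$ bound uses an embedding in the wrong direction. Sobolev gives $\|v\|_{L^{d/(d-1)}(\Om)}\le C\|\nabla v\|_{L^1(\Om)}$, but you then need $\|v\|_{L^2(\Om)}\le C\|v\|_{L^{d/(d-1)}(\Om)}$, i.e.\ the inclusion $L^{d/(d-1)}(\Om)\subset L^2(\Om)$, which on a bounded domain requires $d/(d-1)\ge 2$, i.e.\ $d\le 2$; for $d\ge 3$ Hölder gives exactly the reverse inequality. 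In fact for $d\ge3$ no bound of the form $\|v\|_{L^2(\Om)}\le C\|\nabla v\|_{L^1(\Om)}$ can hold on $H^1_0(\Om)$: an $L^2$-normalized bump supported in a ball of radius $r$ has $\|\nabla v\|_{L^1(\Om)}\sim r^{d/2-1}\to 0$. So outside the special case $d=2$ (the paper assumes $d\ge2$) the second estimate cannot be obtained from the first by soft functional-analytic means; it genuinely requires the structure of $L_\veps[u_\del]$ and its eigenfunctions. The paper's route is: since $\lam_1$ is the minimum of the Rayleigh quotient over $H^1_0(\Om)\setminus\{0\}$, the second estimate is equivalent to the uniform lower bound $\lam_1\ge C$; substituting $v=\vphi_1$ into the first estimate and using the $L^1$ Poincaré inequality \eqref{eq:Poin_ineq0} gives $\lam_1\ge C\|\vphi_1\|_{L^1(\Om)}^2$, and the decisive step is that $\|\vphi_1\|_{L^1(\Om)}$ stays bounded away from zero uniformly in $\veps,\del$: one bounds $\|\vphi_1\|_{L^1(\Om)}\ge\sum_{k}\cL(B^k_\del)\,|\lrang{\vphi_1}_{B^k_\del}|$ and invokes Lemma \ref{prop:avg_inv} (uniform invertibility of the matrix of averages $\lrang{\vphi_j}_{B^k_\del}$), which itself rests on the eigenfunction a priori estimates \eqref{eq:est0}--\eqref{eq:est1}, Corollary \ref{cor:prelim_est} and Lemma \ref{lem:U_del_est}. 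This eigenfunction-specific mechanism is the missing idea in your proposal; your argument, as written, proves the second inequality only when $d=2$.
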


\begin{proof}
We begin by showing the first estimate of \eqref{eq:lam_1_lower_est}.
Let $v\in H^1_0(\Om)$.
Hölder's inequality and Lemma 4 of \cite{Baffet_2021} yield
\begin{equation}
	\del \|\nabla v\|_{L^2(\interfaces_\del)}^2
		\ge \frac{\del}{\cL(\interfaces_\del)} \|\nabla v\|_{L^1(\interfaces_\del)}^2 
		\ge C \|\nabla v\|_{L^1(\interfaces_\del)}^2.
\end{equation}
Similarly we use Hölder's inequality to estimate $\|\nabla v\|_{L^2(D_\del)}^2$ from below by $C\|\nabla v\|_{L^1(D_\del)}^2$ and thus, by \eqref{eq:est0.5}, for $\veps>0$ sufficiently small we get
\begin{equation}\label{eq:proof_lam_1_lower_est_1}
	C_1 \|\nabla v\|_{L^1(\Om)}^2 \le B_{\veps,\del}[v,v] ,
\end{equation}
which is equivalent to the first estimate of \eqref{eq:lam_1_lower_est}.

To verify the second estimate of \eqref{eq:lam_1_lower_est}, we only need to show that the smallest eigenvalue $\lam_1$ of $L_{\veps}[u_\del]$ in $H^1_0(\Omega)$ is bounded from below by a constant $C$ independent of $\veps$ and $\del$. Thus, for the proof we may set $\cV^\del_0=H^1_0(\Om)$ and show that for every $\veps,\del>0$ sufficiently small,
\[
	\lam_1=B_{\veps,\del}[\vphi_1,\vphi_1]\ge C .
\]
Substituting $v=\vphi_1$ into \eqref{eq:proof_lam_1_lower_est_1} yields
\begin{equation}
	\lam_1 = B_{\veps,\del}[\vphi_1,\vphi_1] \ge C_1  \|\nabla \vphi_1\|_{L^1(\Om)}^2 .
\end{equation}
Thus for $\veps,\del>0$ sufficiently small, by Poincaré's inequality \eqref{eq:Poin_ineq0} we get
\begin{equation}
	\lam_1\ge C_1 \|\nabla\vphi_1\|_{L^1(\Om)}^2 \ge C_2  \|\vphi_1\|_{L^1(\Om)}^2.
\end{equation}
Therefore,
\begin{equation}
	\sqrt{\lam_1} \ge \sqrt{C_2} \,
		\sum_{k=1}^K \cL(B^k_\del)|\lrang{\vphi_1}_{B^k_\del}| .
\end{equation}
As a consequence, for every $0<\del\le \del_0$, with $\del_0$ sufficiently small, we have
\begin{equation}
	\sqrt{\lam_1} \ge C_3\min_{k}\cL(B^k_{\del_0})\sum_{k=1}^K |\lrang{\vphi_1}_{B^k_\del}| ,
\end{equation}
where we have used that $B^k_{\del}\supset B^k_{\del_0}$.
Finally, Lemma \ref{prop:avg_inv} yields
\begin{equation}
	\sum_{k=1}^K |\lrang{\vphi_1}_{B^k_\del}| = |\Sig e_1|_{\ell^1}
		\ge |\Sig e_1| \ge C>0 ,
\end{equation}
where $\Sig$ is given by \eqref{eq:avg_mat} and $e_1=(1,0,\ldots,0)^T\in\R^K$, and thus $\lam_1\ge C>0$.
Since $\lam_1$ is the minimum of the Rayleigh quotient in $H^1_0(\Om)\setminus\{0\}$, the last two estimates yield the second inequality in \eqref{eq:lam_1_lower_est}, which completes the proof.
\end{proof}

Recall that we have not yet derived estimate \eqref{eq:U_est_0} of Lemma \ref{lem:U_del_est}.
To do so, we will need to estimate the norm of $\vphi_0-u^0$ (or $\vphi_0$) in $\Om$.

\begin{lemma}\label{lem:phi_0_L2_est}
There exists a positive constant $C$, such that for each positive $\del,\veps>0$ sufficiently small
\begin{equation}\label{eq:lem:phi_0_L2_est}
	\|\vphi_0-u^0\|_{L^2(\Om)} \le C ,
	\qquad
	k=1,\ldots,K .
\end{equation}%
\end{lemma}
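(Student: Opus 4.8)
The plan is to bound $\|\vphi_0 - u^0\|_{L^2(\Om)}$ by splitting $\Om$ into the regions where we already have control. By Corollary~\ref{cor:prelim_est}, $\|\vphi_0 - u^0\|_{L^2(E_\del)}^2 \le C\veps$, and on each $B^j_\del$ we have $\|\vphi_0 - \lrang{\vphi_0}_{B^j_\del}\|_{L^2(B^j_\del)}^2 \le C\veps$. On $\interfaces_\del$ the volume is $\cO(\del)$, but $u^0$ is bounded (it takes finitely many values $\om_m$), so $\|u^0\|_{L^2(\interfaces_\del)}^2 \le C\del$; the genuine difficulty is controlling $\|\vphi_0\|_{L^2(\interfaces_\del)}$. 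Consequently the estimate reduces to two things: (i) showing that the averages $\lrang{\vphi_0}_{B^j_\del}$ stay bounded (independently of $\veps,\del$), and (ii) controlling $\|\vphi_0\|_{L^2(\interfaces_\del)}$ — but since \eqref{eq:U_est_0} is not yet available (and in fact \eqref{eq:U_est_0} will be derived \emph{from} this lemma), this term must instead be handled through Theorem~\ref{thm:gen_est_U}, which bounds $\|\vphi_0\|_{L^2(U_\del)}^2$ by $C(\del^2\|\nabla\vphi_0\|_{L^2(U_\del)}^2 + \del\|\vphi_0\|_{H^1(D_\del)}^2)$. Using \eqref{eq:est1} for the gradient terms, the only remaining uncontrolled quantity in that bound is $\|\vphi_0\|_{L^2(D_\del)}$, i.e. again the size of the averages on the components $B^j_\del$.

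So the crux is step (i): bounding $\max_j |\lrang{\vphi_0}_{B^j_\del}|$ by a constant. I would argue as follows. Write $\vphi_0 = \eta + u^0$ on each $B^j_\del$ is \emph{not} available there (it only holds on $E_\del$), so instead I reason via the lifting property: $\vphi_0 - u^0_\del \in \cV^\del_0$ solves, weakly, $B_{\veps,\del}[\vphi_0 - u^0_\del, w] = -B_{\veps,\del}[u^0_\del, w]$ for all $w \in \cV^\del_0$. Testing with $w = \vphi_0 - u^0_\del$ and using \eqref{eq:est0} (which already bounds $B_{\veps,\del}[v,v]$ for $v\in\{u^0_\del,\vphi_0\}$), I obtain $B_{\veps,\del}[\vphi_0 - u^0_\del, \vphi_0 - u^0_\del] \le C$. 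Then Theorem~\ref{thm:lam_1_lower_est} applied to the \emph{$H^1_0$ function} $\vphi_0 - u^0_\del$ gives $\|\vphi_0 - u^0_\del\|_{L^2(\Om)}^2 \le C\, B_{\veps,\del}[\vphi_0 - u^0_\del, \vphi_0 - u^0_\del] \le C$, hence $\|\vphi_0 - u^0_\del\|_{L^2(\Om)} \le C$. Finally $\|u^0_\del - u^0\|_{L^2(\Om)} = \|\cI_\del u^0 - u^0\|_{L^2(\Om)} \to 0$ by admissibility \eqref{eq:L2_conv} (applied to each $\chi_{\Om^m}$), so it is bounded for $\del$ small, and the triangle inequality yields $\|\vphi_0 - u^0\|_{L^2(\Om)} \le C$.

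In fact this last argument, once one notices $\vphi_0 - u^0_\del \in H^1_0(\Om)$, directly gives the whole lemma in a single stroke, bypassing the region-by-region bookkeeping I described first: the bound $B_{\veps,\del}[\vphi_0 - u^0_\del, \vphi_0-u^0_\del]\le C$ follows from expanding the bilinear form and applying Cauchy--Schwarz together with \eqref{eq:est0}, and then the second inequality of Theorem~\ref{thm:lam_1_lower_est} converts the energy bound into the desired $L^2$ bound. I expect the main obstacle to be verifying cleanly that $B_{\veps,\del}[\vphi_0 - u^0_\del, \vphi_0 - u^0_\del] \le C$ \emph{uniformly} in $\veps$ — one must be careful that the cross term $B_{\veps,\del}[\vphi_0, u^0_\del]$, which a priori carries a factor $\veps^{-1}$ from $\mu_\veps$ on $D_\del$, is in fact harmless because $\nabla u^0_\del = 0$ on $D_\del$, so the integrand is supported on $\interfaces_\del$ where \eqref{eq:est0} already provides the needed control via Cauchy--Schwarz. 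Once that is in hand, the application of Theorem~\ref{thm:lam_1_lower_est} and the admissibility limit \eqref{eq:L2_conv} are routine.
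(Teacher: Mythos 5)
Your proof is correct and, once you strip away the opening detour, it is essentially the paper's argument: define $\eta=\vphi_0-u^0_\del\in\cV^\del_0$, bound $B_{\veps,\del}[\eta,\eta]\le C$ using \eqref{eq:est0}, apply the second inequality of Theorem~\ref{thm:lam_1_lower_est} to the $H^1_0$ function $\eta$, and finish with the triangle inequality and admissibility \eqref{eq:L2_conv}. The only small difference is that the paper exploits the Galerkin orthogonality $B_{\veps,\del}[\vphi_0,\eta]=0$ (since $\eta\in\cV^\del_0$ and $\vphi_0$ solves \eqref{eq:phi0BVP_analysis} weakly) to get $B_{\veps,\del}[\eta,\eta]+B_{\veps,\del}[\vphi_0,\vphi_0]=B_{\veps,\del}[u^0_\del,u^0_\del]$ cleanly, whereas you invoke Cauchy--Schwarz on the cross term; both work, and your worry about the cross term carrying an $\veps^{-1}$ factor is moot once \eqref{eq:est0} gives uniform bounds on $B_{\veps,\del}[\vphi_0,\vphi_0]$ and $B_{\veps,\del}[u^0_\del,u^0_\del]$.
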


\begin{proof}
Let $\eta\in\cV^\del_0$ be given by $\eta=\vphi_0-u^0_\del$, where $u^0_\del$ is the admissible approximation of $u^0$.
Then, \eqref{eq:phi0BVP_analysis} implies
\begin{equation}
	B_{\veps,\del}[\vphi_0,\eta]=0
\end{equation}
and thus using \eqref{eq:est0} we obtain
\begin{equation}
	B_{\veps,\del}[\eta,\eta] \le B_{\veps,\del}[\eta,\eta] +B_{\veps,\del}[\vphi_0,\vphi_0]
		= B_{\veps,\del}[u^0_\del,u^0_\del] \le C .
\end{equation}
Since $\eta=0$ on $\p\Om$, we also have
\begin{equation}
	\lam_1\|\eta\|_{L^2(\Om)}^2 \le B_{\veps,\del}[\eta,\eta] \le C .
\end{equation}
By Theorem \ref{thm:lam_1_lower_est}, $\lam_1$ is bounded from below by a positive constant independent of $\del$ and $\veps$, therefore,
\begin{equation}
	\|\vphi_0-u^0_\del\| _{L^2(\Om)}
		=\|\eta\|_{L^2(\Om)} \le C ,
\end{equation}
which yields \eqref{eq:lem:phi_0_L2_est} by the triangle inequality, \eqref{eq:u_del_decomp} and \eqref{eq:L2_conv} and thus completes the proof.
\end{proof}


We can now prove our main results.
Here, as above, we suppose $u$, given by \eqref{eq:u_decomp}, is approximated by admissible $u_\del$ as defined in Section \ref{sec:reg_approx}, and let $X_K$ be given by \eqref{eq:X_K_def}.
For $\veps,\del$ positive, $L_\veps[\cdot]$ is given by \eqref{eq:linear_op} with $\mu_\veps[\cdot]$ given by \eqref{eq:mu_hatmu}.
Finally, we let $\vphi_0$ satisfy \eqref{eq:phi0BVP_analysis} and $\vphi_1,\ldots,\vphi_K$ satisfy \eqref{eq:eigenValProb_analysis} weakly in $\cV^\del_0$.

\begin{theorem}\label{thm:main}
\begin{enumerate}
\item
Let $\Pi_K^{\veps}[u_\del]$ be the orthogonal projection on $\Phi_K^{\veps,\del}$, given by \eqref{eq:orthogonal-projector-PK}.
There exists a positive constant $C$ such that for every $v\in X_K$ and every $\veps,\del$ sufficiently small,
\begin{equation}\label{eq:main_est_0}
	\norm{v-\Pi_K^{\veps}[u_\del] v }_{L^2(\Om)} \le C\sqrt{\veps+\del}\, \|v\|_{L^2(\Om)}.
\end{equation}
In particular, $v=\wt{u}$ and $v=\chi_{A^k}$ ($k=1,\ldots,K$) satisfy \eqref{eq:main_est_0}.

\item
Let $Q_K^{\veps}[u_\del]$ be the $L^2$-orthogonal projection on $\vphi_0+\Phi_K^{\veps,\del}$, given by \eqref{eq:best_L2_affine_QK}.
There exists a positive constant $C$ such that for every $v\in u^0+X_K$ and every $\veps,\del$ sufficiently small,
\begin{equation}\label{eq:main_est_1}
	\bignorm{v-Q_K^{\veps}[u_\del](v)}_{L^2(\Om)}
		\le C\sqrt{\veps+\del}\, \bigrb{\|v-u^0\|_{L^2(\Om)} + 1} .
\end{equation}
In particular, $v=u$ and $v=u^0$ satisfy \eqref{eq:main_est_1}.
\end{enumerate}
\end{theorem}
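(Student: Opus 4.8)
Here is the plan.

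The plan is to leverage three facts already in hand: (i) each eigenfunction $\vphi_k$ ($1\le k\le K$) is, in $L^2$, within $\cO(\sqrt\veps)$ of a constant on each component $B^j_\del$ and within $\cO(\sqrt\veps)$ of zero on the exterior $E_\del$ (Corollary \ref{cor:prelim_est}); (ii) each $\vphi_k$ has $L^2$-norm $\cO(\sqrt\del)$ on the interface layer $\interfaces_\del$ (Lemma \ref{lem:U_del_est}); and (iii) the matrix $\Sig$ of the averages $\lrang{\vphi_j}_{B^k_\del}$ is invertible uniformly in $\veps,\del$ (Lemma \ref{prop:avg_inv}). Informally, (i)--(iii) say that $\Phi_K^{\veps,\del}$ is, up to $\cO(\sqrt{\veps+\del})$, the $K$-dimensional space of functions that are constant on each $B^k_\del$ and negligible elsewhere, i.e.\ exactly $X_K$. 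Since $\Pi_K^\veps[u_\del]$ and $Q_K^\veps[u_\del]$ are orthogonal projections, in each part it suffices to construct a \emph{single} element of $\Phi_K^{\veps,\del}$ approximating the target to within $\cO(\sqrt{\veps+\del})$.

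For part 1, write $v=\sum_k c_k\chi_{B^k}\in X_K$; since $X_K$ is a fixed finite-dimensional space and the $\chi_{B^k}$ are supported on disjoint sets, $|c|$, $\|v\|_{L^\infty(\Om)}$ and $\|v\|_{L^2(\Om)}$ are all comparable with constants depending only on the $B^k$. Put $\beta=\Sig^{-1}c$ (so $|\beta|\le C|c|$ by Lemma \ref{prop:avg_inv}) and $\vphi=\sum_j\beta_j\vphi_j\in\Phi_K^{\veps,\del}$; then $\lrang{\vphi}_{B^k_\del}=c_k$ by \eqref{eq:avg_mat_phi}. Splitting $\Om$ (up to a null set) into $E_\del$, $\interfaces_\del$ and the $B^k_\del$, one bounds $\|v-\vphi\|_{L^2}$ on each piece: on $B^k_\del$ it is $\|\vphi-\lrang{\vphi}_{B^k_\del}\|_{L^2(B^k_\del)}=\cO(|\beta|\sqrt\veps)$ by (i); on $E_\del$, $v=0$ and $\|\vphi\|_{L^2(E_\del)}=\cO(|\beta|\sqrt\veps)$ by (i); and on $\interfaces_\del$, $\|v\|_{L^2(\interfaces_\del)}=\cO(\|v\|_{L^\infty}\sqrt\del)$ (using $\cL(\interfaces_\del)=\cO(\del)$) while $\|\vphi\|_{L^2(\interfaces_\del)}=\cO(|\beta|\sqrt\del)$ by (ii). Since $|\beta|\le C\|v\|_{L^2}$, summing squares gives $\|v-\vphi\|_{L^2(\Om)}\le C\sqrt{\veps+\del}\,\|v\|_{L^2(\Om)}$, and \eqref{eq:main_est_0} follows by optimality of $\Pi_K^\veps[u_\del]$. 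The cases $v=\wt u$ and $v=\chi_{A^k}$ are instances, since $X_K=\Span\{\chi_{B^k}\}$.

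For part 2, note $v-Q_K^\veps[u_\del](v)=(I-\Pi_K^\veps[u_\del])(v-\vphi_0)$, so it suffices to approximate $g:=v-\vphi_0$ in $\Phi_K^{\veps,\del}$. Writing $v=u^0+w$ with $w=\sum_k c_k\chi_{B^k}\in X_K$, the function $g$ is again approximately piecewise constant: on $B^k_\del$ (let $m$ be such that $A^k\sbt\sbt\Om^m$) it equals $\om_m+c_k-\vphi_0$, which by \eqref{eq:est_phi_0} is within $\cO(\sqrt\veps)$ in $L^2(B^k_\del)$ of the constant $e_k:=\om_m+c_k-\lrang{\vphi_0}_{B^k_\del}$; on $E_\del$, $g=u^0-\vphi_0$ has norm $\cO(\sqrt\veps)$ by \eqref{eq:est_phi_0}; and on $\interfaces_\del$, $\|g\|_{L^2(\interfaces_\del)}\le C(1+\|w\|_{L^2})\sqrt\del$ using $\cL(\interfaces_\del)=\cO(\del)$ and \eqref{eq:U_est_0}. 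Setting $\beta=\Sig^{-1}e$ and $\psi=\sum_j\beta_j\vphi_j$ (so $\lrang{\psi}_{B^k_\del}=e_k$), the same region-by-region bound as in part 1 gives $\|g-\psi\|_{L^2(\Om)}\le C\sqrt{\veps+\del}\,(1+\|w\|_{L^2})$, provided $|\beta|\le C(1+\|w\|_{L^2})$; this last bound holds since $|e_k|\le|\om_m|+|c_k|+|\lrang{\vphi_0}_{B^k_\del}|\le C(1+\|w\|_{L^2})$, where $|\lrang{\vphi_0}_{B^k_\del}|\le C$ follows from $\|\vphi_0\|_{L^2(\Om)}\le C$ (Lemma \ref{lem:phi_0_L2_est}) together with the uniform lower bound $\cL(B^k_\del)\ge\cL(B^k_{\del_0})>0$ for $\del\le\del_0$. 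With $\|w\|_{L^2}=\|v-u^0\|_{L^2}$ this yields \eqref{eq:main_est_1}, and $v=u$, $v=u^0$ are special cases.

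The estimates are otherwise routine; the one point requiring care is that the coefficient vector $\beta$ of the constructed approximant stays bounded independently of $\veps$ and $\del$. In part 1 this is precisely the uniform invertibility of $\Sig$ (Lemma \ref{prop:avg_inv}); in part 2 it additionally needs the $\veps,\del$-uniform bound on $\lrang{\vphi_0}_{B^k_\del}$, which is where Lemma \ref{lem:phi_0_L2_est} and the non-degeneracy $\cL(B^k_\del)\ge c>0$ enter. Everything else is bookkeeping of the two error scales $\sqrt\veps$ (interior oscillation of the $\vphi_k$) and $\sqrt\del$ (the thin layer $\interfaces_\del$), together with the norm equivalences on the fixed finite-dimensional space $X_K$.
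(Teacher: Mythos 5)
Your proposal follows essentially the same route as the paper: you construct the approximant $\vphi=\sum_j\beta_j\vphi_j$ (plus $\vphi_0$ in part 2) by solving $\Sig\beta$ equal to the vector of target averages over $B^k_\del$, invoke Lemma \ref{prop:avg_inv} for the uniform bound on $|\beta|$ and Lemma \ref{lem:phi_0_L2_est} for $\lrang{\vphi_0}_{B^k_\del}$, and then split $\Om$ into $E_\del$, $\interfaces_\del$, and the $B^k_\del$, using Corollary \ref{cor:prelim_est} for the $\cO(\sqrt\veps)$ interior terms and Lemma \ref{lem:U_del_est} together with $\cL(\interfaces_\del)=\cO(\del)$ for the $\cO(\sqrt\del)$ layer term. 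The only differences from the paper's proof are cosmetic bookkeeping (e.g.\ bounding $\|\vphi-\lrang{\vphi}_{B^k_\del}\|_{L^2(B^k_\del)}$ directly via Corollary \ref{cor:prelim_est} rather than re-deriving it through Poincaré and \eqref{eq:est1}, and using the $L^\infty$--$L^2$ equivalence on $X_K$ to handle $\|v\|_{L^2(\interfaces_\del)}$ rather than summing over $\interfaces_\del\cap B^k$); the proof is correct.
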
%

\begin{remark}\label{rem:thm:main}
Theorem \ref{thm:main} estimates the projection error for piecewise constant functions in $X_K$ and $u^0+X_K$.
From these we immediately obtain error estimates for $L^2(\Om)$ functions, e.g, admissible approximations of elements of $X_K$ or $u^0+X_K$.
By the triangle inequality and Theorem \ref{thm:main}, for every $v\in X_K$ and $w\in L^2(\Om)$, we have
\begin{equation}
		\label{eq:main_est_2}
	\bignorm{w-\Pi_K^{\veps}[u_\del] w }_{L^2(\Om)}
		\le C\sqrt{\veps+\del} \, \|w\|_{L^2(\Om)} + \|w-v\|_{L^2(\Om)}\, ,
\end{equation}
and, similarly, if $v\in u^0 +X_K$ and $w \in L^2(\Om)$, then
\begin{equation}
	\label{eq:main_est_3}
	\bignorm{w-Q_K^{\veps}[u_\del](w)}_{L^2(\Om)}
		\le C\sqrt{\veps+\del} \, \bigrb{\|w-u^0\|_{L^2(\Om)} + 1}
			+ \|w-v\|_{L^2(\Om)}\, .
\end{equation}
In particular, \eqref{eq:main_est_2} is satisfied for $v=\wt{u}$ and $w=\wt{u}_\del$, and \eqref{eq:main_est_3} is satisfied for $v=u$ and $w=u_\del$ and for $v=u^0$ and $w=u^0_\del$.
\end{remark}

Similarly to Corollary 6 of \cite{Baffet_2021}, we have the following:

\begin{corollary}\label{cor:main}
\begin{enumerate}
\item
If $u_h$ is the interpolation of $u$ in a FE space $V_h$ as in Proposition \ref{prop:FE}, and either $\cV^h=V_h$ or $\cV^h=H^1(\Om)$, then for every $\veps,h>0$ sufficiently small, estimates \eqref{eq:main_est_0} and \eqref{eq:main_est_1} with $\del=h$ hold true.

\item
If $u_\del$ is the mollification of $u$ as in Proposition \ref{prop:conv} and $\cV^\del=H^1(\Om)$, then for every $\veps,\del>0$ sufficiently small, estimates \eqref{eq:main_est_0} and \eqref{eq:main_est_1} hold true.
\end{enumerate}
\end{corollary}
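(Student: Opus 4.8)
Our plan is to obtain Corollary \ref{cor:main} as an immediate specialization of Theorem \ref{thm:main}, since all the analytic work has already been done. Recall that the sole hypothesis of Theorem \ref{thm:main} is that $u_\del$ be an admissible approximation of $u$ in the sense of Definition \ref{def:admiss_approx}, for \emph{some} closed subspace $\cV^\del\sbt H^1(\Om)$ containing $u_\del$, $u^0_\del$ and $\wt u_\del$, with $\vphi_0,\vphi_1,\ldots,\vphi_K$ solving \eqref{eq:phi0BVP_analysis}--\eqref{eq:eigenValProb_analysis} weakly in $\cV^\del_0$. None of the intermediate results leading to Theorem \ref{thm:main} (Corollary \ref{cor:prelim_est}, Lemmas \ref{lem:U_del_est} and \ref{prop:avg_inv}, Theorem \ref{thm:lam_1_lower_est}, Lemma \ref{lem:phi_0_L2_est}) uses any further structure of $\cV^\del$; in particular they are indifferent to whether $\cV^\del$ is finite- or infinite-dimensional. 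Hence it suffices, in each of the two cases, to verify that the stated construction of $u_\del$ is admissible and that the stated $\cV^\del$ is an admissible ambient space, and then to quote Theorem \ref{thm:main} directly.

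For part 1, I would first fix the extension of $u$ to $\ol{\Om}$ as in Proposition \ref{prop:FE}; this extension is needed only to make nodal interpolation well defined and leaves $u$ unchanged as an element of $L^2(\Om)$, hence does not affect the $L^2$-projection errors appearing in \eqref{eq:main_est_0}--\eqref{eq:main_est_1}. Proposition \ref{prop:FE} then gives that the $\cP^r$-FE interpolant $u_h\in V_h$ is admissible with $\del=h$, for any regular, quasi-uniform mesh family. Since $V_h\sbt H^1(\Om)$, the interpolation operator maps into both $\cV^h=V_h$ and $\cV^h=H^1(\Om)$, so conditions 1--4 of Definition \ref{def:admiss_approx} hold for either choice; with $\cV^h=V_h$ the problems \eqref{eq:phi0BVP_analysis}--\eqref{eq:eigenValProb_analysis} are their Galerkin discretizations, and with $\cV^h=H^1(\Om)$ they are the continuous ones. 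Applying Theorem \ref{thm:main} with $\del=h$ then yields \eqref{eq:main_est_0} and \eqref{eq:main_est_1} for all $\veps,h>0$ sufficiently small; the closing assertions of Theorem \ref{thm:main} specialize these to $v=\wt u$, $v=\chi_{A^k}$, $v=u$ and $v=u^0$.

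For part 2, I would analogously fix the $\R^d$-extension of $u$ as in Proposition \ref{prop:conv}, so that $u_\del=\zeta_\del* u$ is defined; Proposition \ref{prop:conv} gives that $u_\del$ is admissible, and one takes $\cV^\del=H^1(\Om)$, so \eqref{eq:phi0BVP_analysis}--\eqref{eq:eigenValProb_analysis} are the continuous boundary-value problems. Theorem \ref{thm:main} then gives \eqref{eq:main_est_0} and \eqref{eq:main_est_1} for all $\veps,\del>0$ sufficiently small. I do not expect any genuine obstacle here: everything of substance lives in Theorem \ref{thm:main} and in the admissibility statements of Propositions \ref{prop:FE} and \ref{prop:conv}, all of which we may assume. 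The only point deserving a careful sentence is the one already made in the first paragraph, namely that the proof of Theorem \ref{thm:main} does not distinguish finite- from infinite-dimensional $\cV^\del$, which is exactly what lets a single theorem cover the FE interpolant with either $\cV^h=V_h$ or $\cV^h=H^1(\Om)$ as well as the mollification construction.
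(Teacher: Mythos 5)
Your proposal is correct and follows exactly the route the paper takes: the corollary is a direct consequence of Theorem \ref{thm:main} combined with the admissibility statements in Propositions \ref{prop:FE} and \ref{prop:conv}. Your additional remarks (that $V_h\subset H^1(\Om)$ permits either choice of $\cV^h$, and that the proof of Theorem \ref{thm:main} is indifferent to the dimension of $\cV^\del$) merely make explicit what the paper leaves implicit.
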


\begin{proof}
This corollary is a direct result of Theorem \ref{thm:main} and Propositions \ref{prop:FE} and \ref{prop:conv}.
%
\end{proof}

Assertion 1 of Corollary \ref{cor:main} is particularly important for applications, as
it implies that our main estimates are valid not only for the continuous setting, but also for Galerkin FE discretizations as follows:
Let $u_h$ be the interpolant of $u$ in a FE space $V_h$ with mesh size $h$, and let $\vphi_0,\vphi_1,\ldots,\vphi_K$ be the Galerkin FE solutions of \eqref{eq:phi0BVP_analysis}, \eqref{eq:eigenValProb_analysis} with $u_\del=u_h$ in $V_h$.
By Assertion 1 of Corollary \ref{cor:main}, the projections $\Pi_K^{\veps}[u_h], Q_K^{\veps}[u_h]$ defined by \eqref{eq:orthogonal-projector-PK} and \eqref{eq:best_L2_affine_QK} -- using the \emph{computed} FE solutions $\vphi_0,\vphi_1,\ldots,\vphi_K$ -- satisfy \eqref{eq:main_est_0} and \eqref{eq:main_est_1} with $\del=h$.

\begin{proof}[Proof of Theorem \ref{thm:main}]
Here, we only show \eqref{eq:main_est_1}; the proof of \eqref{eq:main_est_0} is similar.
We have
\begin{equation}
	\bignorm{v-Q_K^{\veps}[u_\del](v) }_{L^2(\Om)}
		= \min_{\beta\in\R^K}
			\biggnorm{(v -\vphi_0) -\sum_{k=1}^K \beta_k \vphi_k }_{L^2(\Om)} .
\end{equation}
By Lemma \ref{prop:avg_inv} there exists a unique vector $\beta=(\beta_k)\in\R^K$ such that
\begin{equation}\label{eq:beta_def}
	\sum_{j=1}^K \beta_j \lrang{\vphi_j}_{B^k_\del} = \lrang{v-\vphi_0}_{B^k_\del} ,
	\qquad
	k=1,\ldots,K,
\end{equation}
and, moreover,
\begin{equation}
	|\beta|^2 \le C_1 \sum_{k=1}^K \lrang{v-\vphi_0}_{B^k_\del}^2
\end{equation}
for $C_1>0$ independent of $\veps$, $\del$ and $v$.
Thus, we get
\begin{equation}
	|\beta| \le \sqrt{C_1}
			\sqb{\sum_{k=1}^K \|v-\vphi_0\|_{L^2(B^k_\del)}^2}^{\frac{1}{2}}.
\end{equation}
By Lemma \ref{lem:phi_0_L2_est}, we have
\begin{equation}
	\|v-\vphi_0\|_{L^2(B^k_\del)}
		\le \|v-u^0\|_{L^2(B^k_\del)} + \|u^0-\vphi_0\|_{L^2(B^k_\del)}
		\le \|v-u^0\|_{L^2(B^k_\del)} + C
\end{equation}
which yields
\begin{equation}\label{eq:beta_phi_0_coef_est}
	|\beta| \le C \bigrb{\|v-u^0\|_{L^2(\Om)}+1} ,
\end{equation}
with $C>0$ independent of $\veps$, $\del$ and $v$.
Define
\begin{equation}
	\vphi =\vphi_0 +\wt\vphi ,
	\qquad
	\wt\vphi = \sum_{k=1}^K \beta_k \vphi_k \in \Phi_K^{\veps,\del}.
\end{equation}
Thus, we have
\begin{equation}
	\bignorm{v-Q_K^{\veps}[u_\del](v) }_{L^2(\Om)}
		\le \|v-\vphi\|_{L^2(\Om)} .
\end{equation}
By the triangle inequality, we get
\begin{equation}\label{eq:proof_err_est_0}
\begin{aligned}
	\|v-\vphi\|_{L^2(\Om)} &\le \|v-\vphi\|_{L^2(E_\del)}
			+\|v-u^0\|_{L^2(\interfaces_\del)} +\|u^0- \vphi\|_{L^2(\interfaces_\del)} \\
		&\quad +\sum_{k=1}^K \|v-\vphi\|_{L^2(B^k_\del)} .
\end{aligned}
\end{equation}
Next we estimate each of the terms on the right hand side.
Since $v=u^0$ in $E_\del$, we can estimate the first term as follows:
\begin{equation}
\begin{aligned}
	\|v-\vphi\|_{L^2(E_\del)} & \le \|u^0-\vphi_0\|_{L^2(E_\del)}
			+\|\wt\vphi\|_{L^2(E_\del)} \\
		& \le \|u^0-\vphi_0\|_{L^2(E_\del)}
			+\sum_{k=1}^K |\beta_k| \|\vphi_k\|_{L^2(E_\del)} \\
		& \le C\sqrt{\veps} \, \bigrb{\|v-u^0\|_{L^2(\Om)}+1}
\end{aligned}
\end{equation}
because of \eqref{eq:est_phi_0}, \eqref{eq:est_phi_k} and \eqref{eq:beta_phi_0_coef_est}.
The second term on the right hand side of \eqref{eq:proof_err_est_0} is the $L^2$ norm of the piecewise constant function $w=v-u^0$ in $\interfaces_\del$.
Since $w=0$ a.e.\ in $\Omega\setminus\ol{\bigcup_{k=1}^K B^k}$, we have
\begin{equation}
	\|v-u^0\|_{L^2(\interfaces_\del)}^2 = \int_{\interfaces_\del} w^2
		=\sum_{k=1}^K \int_{\interfaces_\del\cap B^k} w^2 .
\end{equation}
We now use that $w^2$ is constant in each $B^k$ and that $\cL(\interfaces_\del\cap B^k) = \cO(\del)$ \cite[Lemma 4]{Baffet_2021} to get
\begin{equation}
	\int_{\interfaces_\del\cap B^k} w^2
		= \cL(\interfaces_\del\cap B^k)\, w^2|_{B^k}
		\le C \del\, \cL(B^k)\, w^2|_{B^k}
		= C\del \int_{B^k} w^2
\end{equation}
which yields
\begin{equation}
	\|v-u^0\|_{L^2(\interfaces_\del)} \le C\sqrt{\del} \, \|v-u^0\|_{L^2(\Om)} .
\end{equation}
To estimate the third term we use Lemma \ref{lem:U_del_est} and \eqref{eq:beta_phi_0_coef_est} to obtain
\begin{equation}
\begin{aligned}
	\|u^0- \vphi\|_{L^2(\interfaces_\del)}
		& \le \|u^0-\vphi_0\|_{L^2(\interfaces_\del)}
			+\sum_{k=1}^K |\beta_k|\|\vphi_k\|_{L^2(\interfaces_\del)} \\
		& \le C\sqrt{\del} \, \bigrb{\|v-u^0\|_{L^2(\Om)}+1} .
\end{aligned}
\end{equation}
For each $k=1,\ldots,K$, we estimate $\|v-\vphi\|_{L^2(B^k_\del)}$ as follows:
Since $\beta$ solves \eqref{eq:beta_def}, we have $\lrang{v-\vphi}_{B^k_\del}=0$, which by the Poincaré inequality \eqref{eq:Poin_ineq_avg} yields
\begin{equation}
	\|v-\vphi\|_{L^2(B^k_\del)} \le C\|\nabla(v-\vphi)\|_{L^2(B^k_\del)} .
\end{equation}
Since $\nabla v=0$ in $B^k_\del$, estimates \eqref{eq:est1} and \eqref{eq:beta_phi_0_coef_est} yield
\begin{equation}
\begin{aligned}
	\|v-\vphi\|_{L^2(B^k_\del)} &\le C_1 \|\nabla\vphi\|_{L^2(B^k_\del)}
		\le C_1 \rb{\|\nabla\vphi_0\|_{L^2(B^k_\del)}
			+ \sum_{j=1}^K |\beta_j| \|\nabla\vphi_j\|_{L^2(B^k_\del)}} \\
		& \le C_2\sqrt{\veps} \bigrb{\|v-u^0\|_{L^2(\Om)}+1} .
\end{aligned}
\end{equation}
Finally, by combining the above, we obtain
\begin{equation}
	\bignorm{v-Q_K^{\veps}[u_\del](v) }_{L^2(\Om)}
		\le \|v-\vphi\|_{L^2(\Om)} \le C\sqrt{\veps+\del} \, \bigrb{\|v-u^0\|_{L^2(\Om)}+1}
\end{equation}
which completes the proof.
\end{proof}

\section{Numerical examples}\label{sec:num_res}


Here we present numerical examples which illustrate the main results of our analysis and, in particular, the remarkable accuracy of AS decompositions for piecewise constant media\footnote{We will use the term \emph{medium} for functions from $\Omega\subset\mathbb{R}^{2}$ into $\mathbb{R}$.}.
First, we consider media comprised of a constant background $u^0$ and a single characteristic function.
Secondly we consider a medium which consists of an inhomogeneous background comprised of five sets $\Omega^{m}$, $m=1,\ldots,5$, and four interior inclusions $A^{k}$, $k=1,\ldots,4$ (see Section \ref{sec:medium}).
In the third example, we consider a medium which consists of four adjacent squares in a constant background.
Since the boundaries of the squares are not mutually disjoint, this example is not covered by our theory.
Next we apply the AS decomposition to two more complex examples that are not covered by our theory: a polygonal approximation of the map of Switzerland with its 26 cantons and the well-known Marmousi model from seismic imaging.
Finally, we devise a simple iterative inversion algorithm based on AS decompositions to solve a standard deconvolution inverse problem from optical imaging \cite{bertero2021inverse}.

In all examples the domain $\Omega \subset \mathbb{R}^{2}$ is rectangular and we use a regular, uniform triangular mesh $\mathcal{T}_{h}$ whose vertices lie on an equidistant Cartesian grid of size $h>0$.
We let $\mathcal{V}^{\delta} \subset H^{1}(\Omega)$, with $\delta = h$, be the standard $\mathcal{P}^{1}$ FE space of continuous piecewise linear functions and set $\mathcal{V}^{\delta}_{0} = \mathcal{V}^{\delta} \cap H^{1}_{0}$.
For piecewise constant $u$, we let $u_\delta$ denote the $H^{1}$-conforming (continuous) interpolation of $u$ in the FE space $\cV^\delta$.

We consider decompositions associated with $L_{\varepsilon}[u_{\delta}]$ given by \eqref{eq:linear_op} with $\mu_\veps[\cdot]$ of the form \eqref{eq:muq} with $q = 2$.
We compute the approximation of the background $\varphi_{0}$ and the first few eigenfunctions $\varphi_{k}$ of $L_{\varepsilon}[u_{\delta}]$ by numerically solving \eqref{eq:phi0BVP_analysis} and \eqref{eq:eigenValProb_analysis} using the Galerkin FE method.
The discretization of \eqref{eq:eigenValProb_analysis} leads to a generalized eigenvalue problem
\begin{align}\label{eq:num_gen_eig_val_prob}
	\mathrm{A}\varphi_{k} = \lambda_{k}\mathrm{M}\varphi_{k} \quad\text{for}\quad k = 1, \ldots, K,
\end{align}
where the stiffness matrix $\mathrm{A}$ corresponds to the discretization of $L_{\varepsilon}[u_{\delta}]$ and $\textrm{M}$ is the mass matrix.
We solve \eqref{eq:num_gen_eig_val_prob} numerically using the MATLAB function \texttt{eigs}.

Once we have obtained $\varphi_{0} \in \mathcal{V}^{\delta}$ and $\varphi_{k} \in \mathcal{V}^{\delta}_{0}$ for $k=1,\ldots,K$, we can compute the projections $\Pi_K^{\veps}[u_\del]$ and $Q_K^{\veps}[u_\del]$ given by \eqref{eq:orthogonal-projector-PK} and \eqref{eq:best_L2_affine_QK}.
Since $\{\varphi_{k}\}_{k=1}^K$ are computed numerically, they satisfy $\lrang{\vphi_k,\vphi_j}=\del_{kj}$ only up to a small error.
This slight loss of orthonormality causes small errors when computing the projection $\Pi_K^{\veps}[u_\del]$ directly from the Fourier expansion 
\begin{align*}
	\Pi_{K}^{\varepsilon}[u_{\delta}]v = \sum_{k=1}^{K} \langle \varphi_{k},v \rangle\, \varphi_{k} .
\end{align*}
To avoid these errors, we instead compute $\Pi_K^{\veps}[u_\del] v$ by solving the $K$-dimensional least squares problem 
\begin{align*}
	\Pi_{K}^{\varepsilon}[u_{\delta}]v = \operatorname*{argmin}_{w \in \Phi_{K}^{\varepsilon,\delta}}\|v-w\|_{L^2(\Om)},
	\qquad \Phi_{K}^{\varepsilon,\delta} = \operatorname{span}\{\varphi_{k}\}_{k=1}^{K} .
\end{align*}

When validating the conclusion of Theorem \ref{thm:main} and its corollary in Remark \ref{rem:thm:main}, we shall focus on two types of errors
\begin{equation}\label{eq:NumEx.ApproxError}
	\| u - Q_{K}^{\varepsilon}[u_{\delta}](u) \|_{L^2(\Om)}
		\qquad\text{and} \qquad
	\| u_{\delta} - Q_{K}^{\varepsilon}[u_{\delta}](u_{\delta}) \|_{L^2(\Om)} ;
\end{equation}
the first measures the misfit to the true medium $u$ whereas the second measures the misfit to the continuous interpolant $u_{\delta}$. Note that in both cases the same AS basis is used.
Computing these expressions requires the evaluation of $L^2$ inner products.
As the functions participating in the expression on the right lie in the FE space $\cV^\del$, we can evaluate the needed integrals exactly.
In contrast, the expression on the left includes inner products involving a piecewise constant function whose discontinuities are, in general, not aligned with the mesh.
Thus, to evaluate the integrals for the error on the left in \eqref{eq:NumEx.ApproxError}, we use a numerical quadrature rule from ACM TOMS algorithm $\#584$ \cite{laurie1982algorithm} with degree of precision of $8$ and $19$ quadrature points.

In principle, $\varepsilon > 0$ should be as small as possible, while sufficiently large so that the matrix $\mathrm{A}$ is well-conditioned.
Unless specified otherwise, we always use ${\varepsilon = 10^{-8}}$.

\subsection{Four simple shapes}

\begin{figure}[t]
\centering
\begin{subfigure}[c]{0.24\textwidth}
\centering
\includegraphics[width=0.9\linewidth]{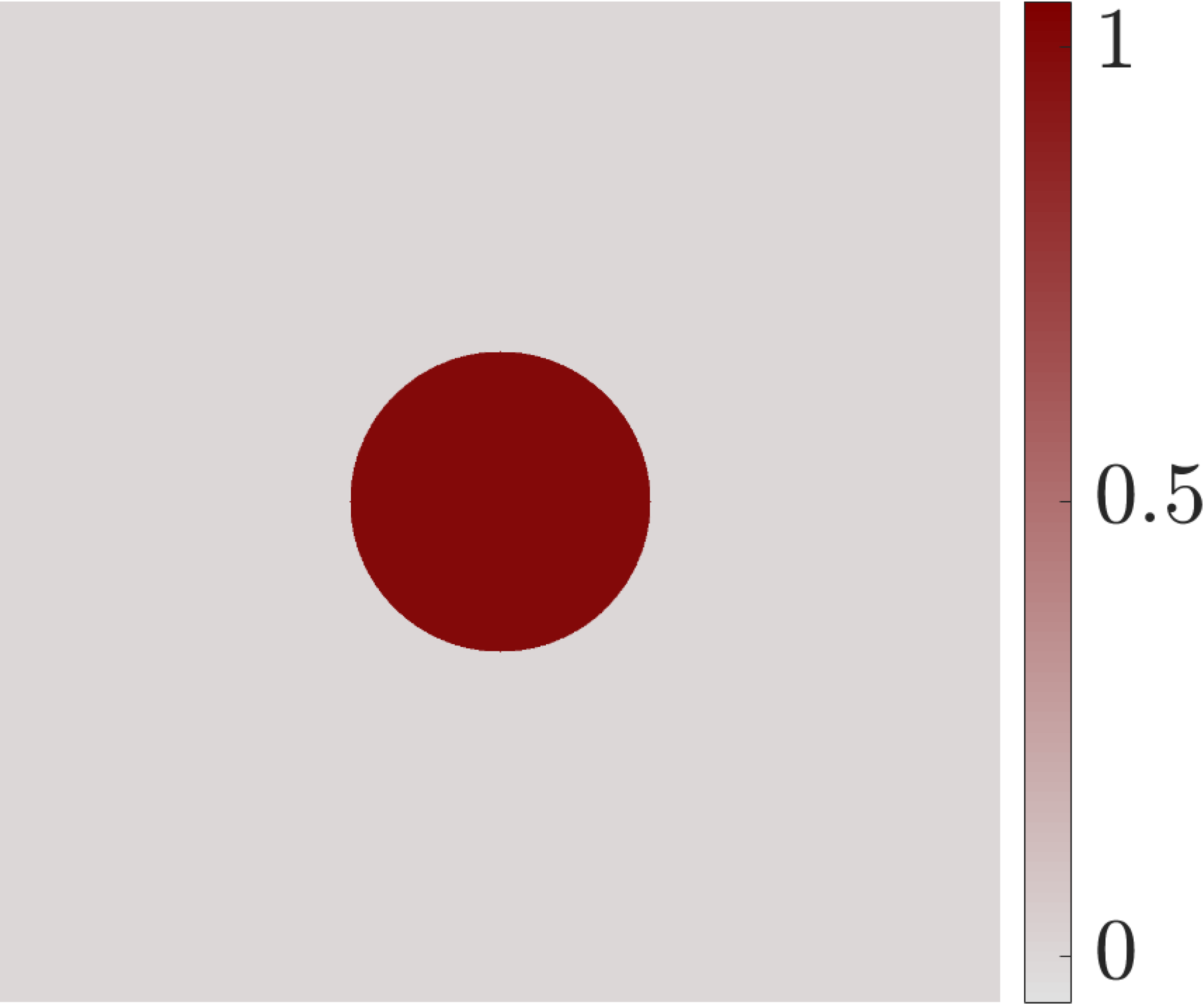}
\caption{disc}
\end{subfigure}\hfill
\begin{subfigure}[c]{0.24\textwidth}
\centering
\includegraphics[width=0.9\linewidth]{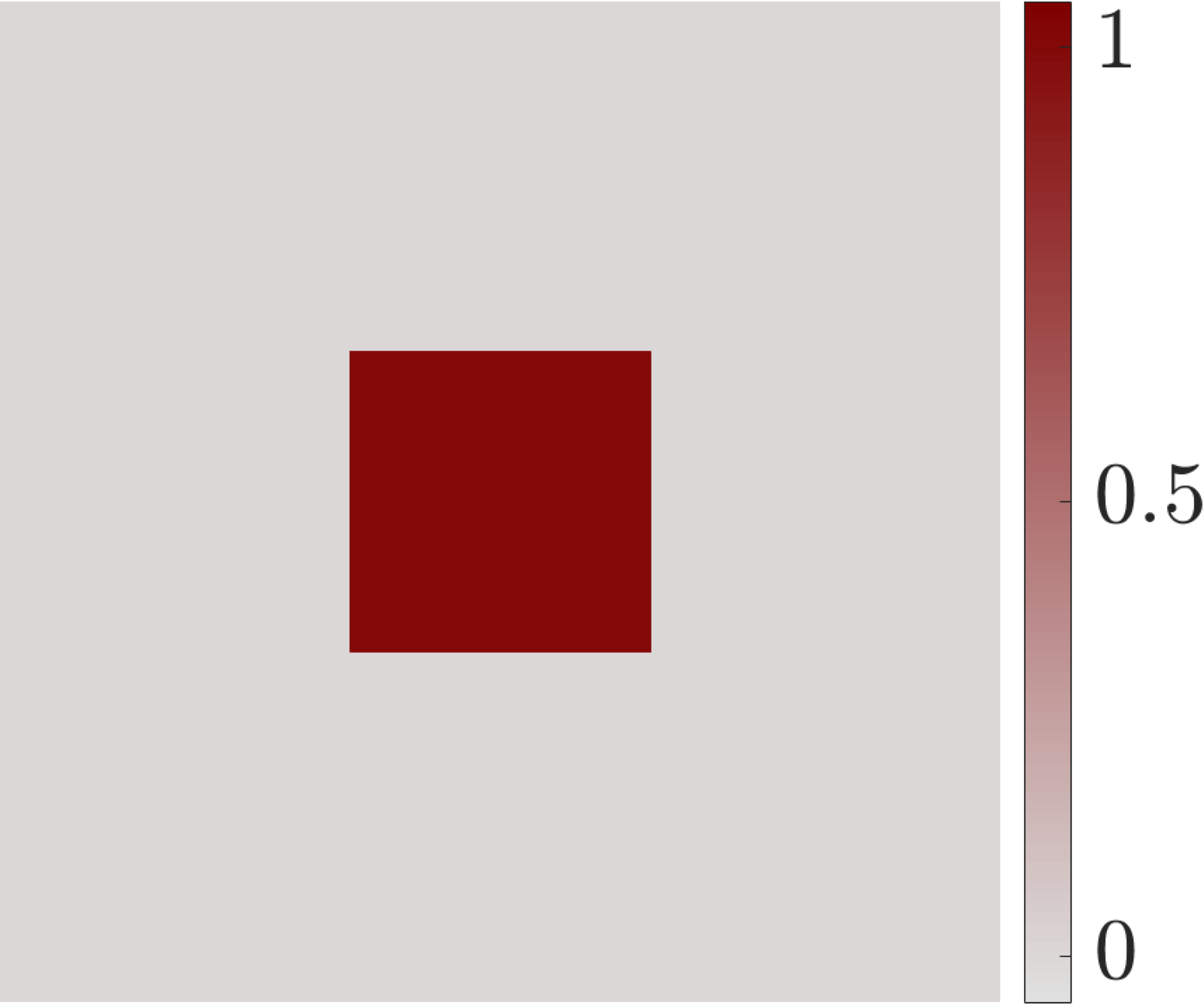}
\caption{square}
\end{subfigure}\hfill
\begin{subfigure}[c]{0.24\textwidth}
\centering
\includegraphics[width=0.9\linewidth]{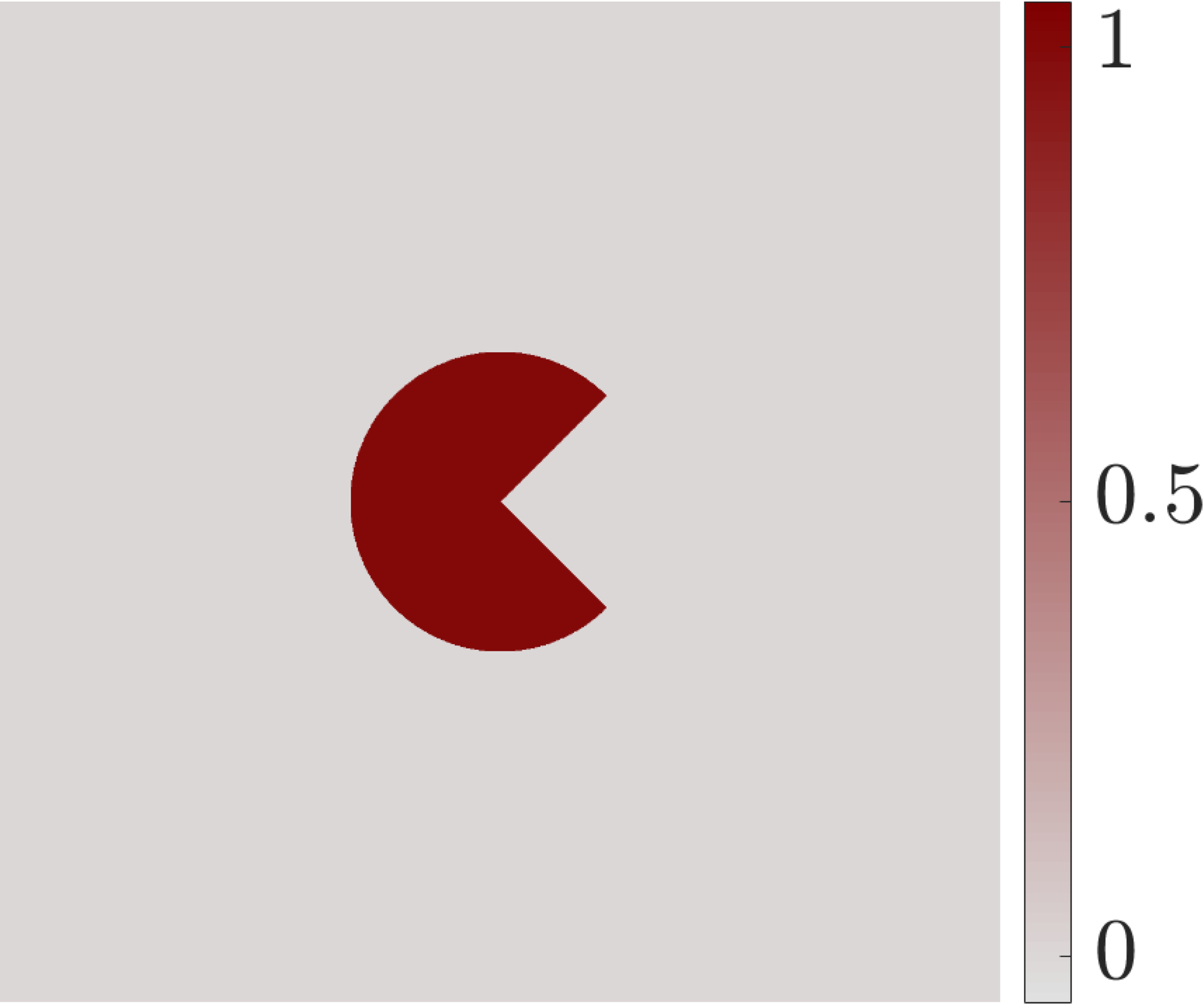}
\caption{Pac-Man}
\end{subfigure}\hfill
\begin{subfigure}[c]{0.24\textwidth}
\centering
\includegraphics[width=0.9\linewidth]{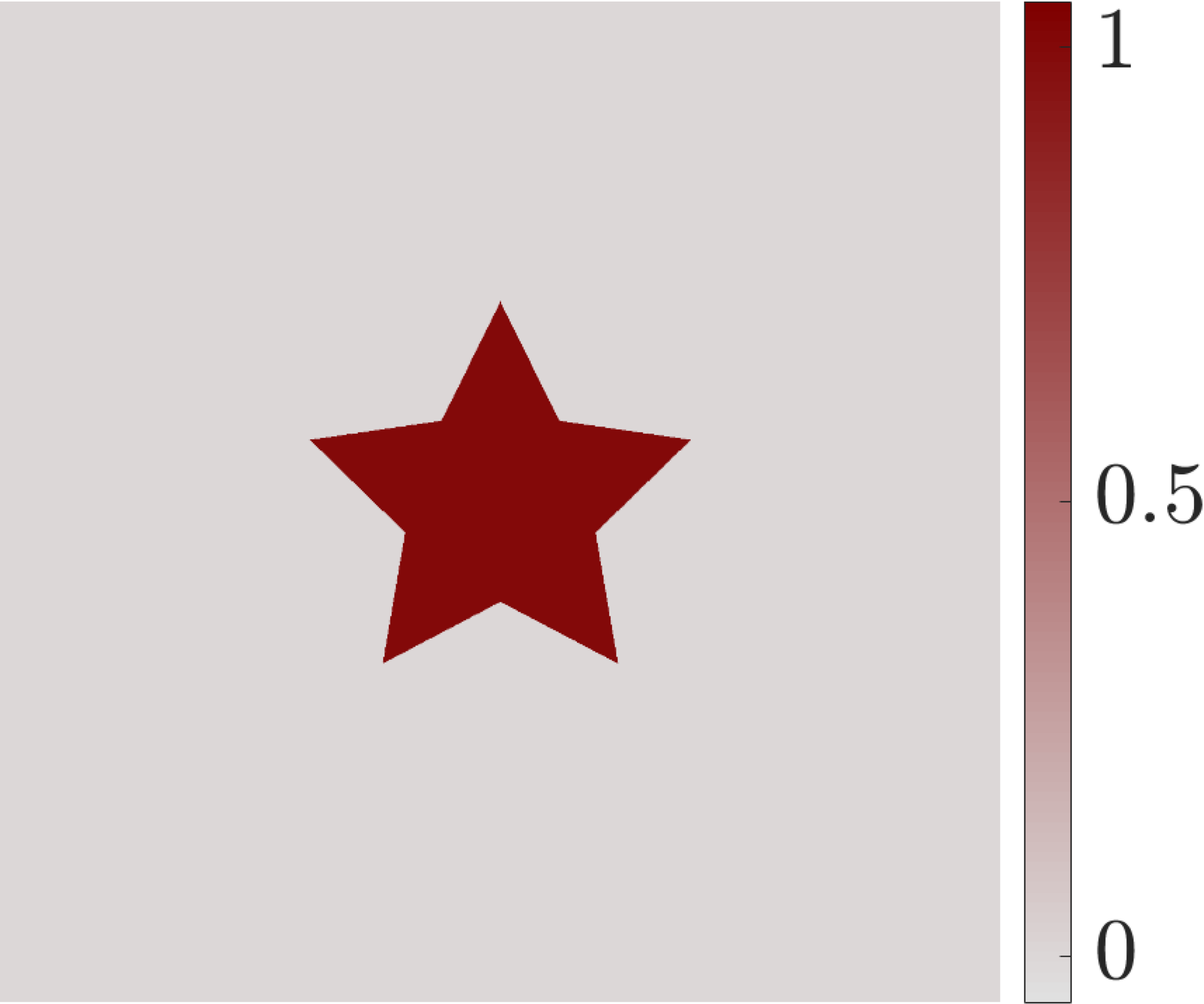}
\caption{star}
\end{subfigure}
\caption{Four simple shapes. The exact medium $u$ (or $u_{\delta}$) consists of a single characteristic function $\chi_{A^{1}}$ and vanishing $u^{0}$.}
\label{fig:NumEx.SimpleShapes}
\end{figure}

We consider the four 2-dimensional piecewise constant media ${u:\Omega\to\mathbb{R}}$, in $\Omega = (0,1)^{2}$, shown in Fig.\ \ref{fig:NumEx.SimpleShapes}.
All four vanish on the boundary $\partial\Omega$ and correspond to the characteristic function
\begin{align}
	u(x) = \widetilde{u}(x) = \chi_{A^{1}}(x), \quad x \in \Omega
\end{align}
of a Lipschitz domain and are therefore covered by our analysis.
The sets are chosen purposely with different geometric properties:
the disc is convex with a smooth boundary; the square is convex, but its boundary is only piecewise smooth; the Pac-Man and the star are both non-convex with piecewise smooth boundaries.

\begin{figure}[t]
\centering
\begin{subfigure}{0.49\textwidth}
\centering
%
%
\definecolor{mycolor1}{rgb}{0.00000,0.44700,0.74100}%
\definecolor{mycolor2}{rgb}{0.85000,0.32500,0.09800}%
\definecolor{mycolor3}{rgb}{0.92900,0.69400,0.12500}%
\definecolor{mycolor4}{rgb}{0.49400,0.18400,0.55600}%
\begin{tikzpicture}[scale=0.8]

\begin{axis}[%
width=1\textwidth,
height=0.8\textwidth,
at={(1.054in,0.642in)},
scale only axis,
xmode=log,
xmin=0.000620568933378345,
xmax=0.0314731352948542,
xminorticks=true,
xlabel style={font=\color{white!15!black}},
xlabel={$\delta$},
ymode=log,
ymin=0.00943987526480663,
ymax=0.10792590738485,
yminorticks=true,
ylabel style={font=\color{white!15!black}},
ylabel={Error},
axis background/.style={fill=white},
xmajorgrids,
ymajorgrids,
legend style={at={(0.03,0.97)}, anchor=north west, legend cell align=left, align=left, draw=white!15!black}
]
\addplot [color=mycolor1, mark size=5.0pt, mark=o, mark options={solid, mycolor1}]
  table[row sep=crcr]{%
0.025	0.0605230826642486\\
0.0125	0.0435884369710871\\
0.00625	0.0305945029200308\\
0.003125	0.0215258543953637\\
0.0015625	0.0149163299791692\\
0.00078125	0.0105917142534217\\
};
\addlegendentry{disc}

\addplot [color=mycolor2, mark size=3pt, mark=square, mark options={solid, mycolor2}]
  table[row sep=crcr]{%
0.025	0.0961890662056829\\
0.0125	0.069331826894853\\
0.00625	0.0495067500871785\\
0.003125	0.035179914904505\\
0.0015625	0.0249377938795556\\
0.00078125	0.0176556436289523\\
};
\addlegendentry{square}

\addplot [color=mycolor3, mark size=3pt, mark=triangle, mark options={solid, mycolor3}]
  table[row sep=crcr]{%
0.025	0.0678603006176516\\
0.0125	0.0477742286279695\\
0.00625	0.0332789151784551\\
0.003125	0.0234451834958269\\
0.0015625	0.0163458864657808\\
0.00078125	0.0115886009971929\\
};
\addlegendentry{Pac-Man}

\addplot [color=mycolor4, mark size=3pt, mark=triangle, mark options={solid, rotate=270, mycolor4}]
  table[row sep=crcr]{%
0.025	0.0743249327917898\\
0.0125	0.0503018826907414\\
0.00625	0.0346352918756755\\
0.003125	0.0247304042501057\\
0.0015625	0.0176509001470463\\
0.00078125	0.0123601274880543\\
};
\addlegendentry{star}

\addplot [color=black, dashed]
  table[row sep=crcr]{%
0.025	0.05\\
0.0125	0.0353553390593274\\
0.00625	0.025\\
0.003125	0.0176776695296637\\
0.0015625	0.0125\\
0.00078125	0.00883883476483184\\
};
\addlegendentry{$\sqrt{\delta}$}

\end{axis}
\end{tikzpicture}%
\end{subfigure}
\hfill
\begin{subfigure}{0.49\textwidth}
\centering
%
%
\definecolor{mycolor1}{rgb}{0.00000,0.44700,0.74100}%
\definecolor{mycolor2}{rgb}{0.85000,0.32500,0.09800}%
\definecolor{mycolor3}{rgb}{0.92900,0.69400,0.12500}%
\definecolor{mycolor4}{rgb}{0.49400,0.18400,0.55600}%
\begin{tikzpicture}[scale=0.8]

\begin{axis}[%
width=1\textwidth,
height=0.8\textwidth,
at={(1.054in,0.642in)},
scale only axis,
xmode=log,
xmin=3.16227766016838e-09,
xmax=3.16227766016838,
xminorticks=true,
xtick={1e-08,1e-06,0.0001,0.01,1},
xticklabels={{$10^{-8}$},{$10^{-6}$},{$10^{-4}$},{$10^{-2}$},{$10^0$}},
xlabel style={font=\color{white!15!black}},
xlabel={$\varepsilon$},
ymode=log,
ymin=0.000105917142534217,
ymax=0.180082212610157,
yminorticks=true,
ylabel style={font=\color{white!15!black}},
ylabel={Error},
axis background/.style={fill=white},
xmajorgrids,
ymajorgrids,
legend style={at={(0.03,0.03)}, anchor=south west, legend cell align=left, align=left, draw=white!15!black}
]
\addplot [color=mycolor1, mark size=5.0pt, mark=o, mark options={solid, mycolor1}]
  table[row sep=crcr]{%
1	0.0971752281469363\\
0.1	0.0135520043889688\\
0.01	0.0106246758602478\\
0.001	0.0105921503832557\\
0.0001	0.0105917293924796\\
1e-05	0.0105917154816832\\
1e-06	0.0105917143722937\\
1e-07	0.0105917142642025\\
1e-08	0.0105917142534217\\
};
\addlegendentry{disc}

\addplot [color=mycolor2, mark size=3pt, mark=square, mark options={solid, mycolor2}]
  table[row sep=crcr]{%
1	0.0924643394546158\\
0.1	0.0196838515662359\\
0.01	0.0176856385498676\\
0.001	0.0176568695929709\\
0.0001	0.0176557485299788\\
1e-05	0.0176556539329666\\
1e-06	0.0176556446482563\\
1e-07	0.0176556437216046\\
1e-08	0.0176556436289523\\
};
\addlegendentry{square}

\addplot [color=mycolor3, mark size=3pt, mark=triangle, mark options={solid, mycolor3}]
  table[row sep=crcr]{%
1	0.124732111302158\\
0.1	0.0157054254290873\\
0.01	0.011640810677246\\
0.001	0.0115899409553064\\
0.0001	0.0115886964802611\\
1e-05	0.011588610152612\\
1e-06	0.011588601900646\\
1e-07	0.0115886010792897\\
1e-08	0.0115886009971929\\
};
\addlegendentry{Pac-Man}

\addplot [color=mycolor4, mark size=3pt, mark=triangle, mark options={solid, rotate=270, mycolor4}]
  table[row sep=crcr]{%
1	0.143044386047869\\
0.1	0.0178665255082266\\
0.01	0.0124255557227186\\
0.001	0.0123613081363377\\
0.0001	0.0123601924227082\\
1e-05	0.012360133439398\\
1e-06	0.012360128065749\\
1e-07	0.0123601275325634\\
1e-08	0.0123601274880543\\
};
\addlegendentry{star}

\addplot [color=black, dashed]
  table[row sep=crcr]{%
1	0.05\\
0.316227766016838	0.0158113883008419\\
0.1	0.005\\
0.0316227766016838	0.00158113883008419\\
0.01	0.0005\\
0.00316227766016838	0.000158113883008419\\
0.001	5e-05\\
0.000316227766016838	1.58113883008419e-05\\
0.0001	5e-06\\
3.16227766016838e-05	1.58113883008419e-06\\
1e-05	5e-07\\
3.16227766016838e-06	1.58113883008419e-07\\
1e-06	5e-08\\
3.16227766016838e-07	1.58113883008419e-08\\
1e-07	5e-09\\
3.16227766016838e-08	1.58113883008419e-09\\
1e-08	5e-10\\
};
\addlegendentry{$\varepsilon$}

\end{axis}
\end{tikzpicture}%
\end{subfigure}
\caption{Four simple shapes. The error $\|u - \Pi_{1}^{\varepsilon}[u_\delta](u)\|_{L^2(\Om)}$.
Left: the error as a function of $\delta$ for fixed $\varepsilon = 10^{-8}$.
Right: the error as a function of $\varepsilon$ for fixed mesh-size $\delta = 0.05/2^{6}$.}
\label{fig:NumEx.SimpleShapes.ErrorDelta}
\end{figure}
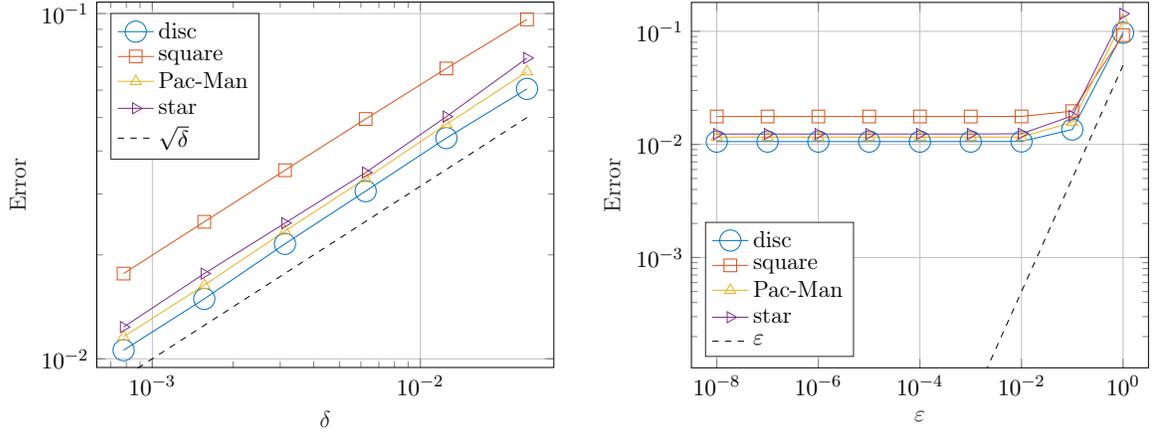

\begin{figure}[t]
\centering
\begin{subfigure}{0.49\textwidth}
\centering
%
%
\definecolor{mycolor1}{rgb}{0.00000,0.44700,0.74100}%
\definecolor{mycolor2}{rgb}{0.85000,0.32500,0.09800}%
\definecolor{mycolor3}{rgb}{0.92900,0.69400,0.12500}%
\definecolor{mycolor4}{rgb}{0.49400,0.18400,0.55600}%
\begin{tikzpicture}[scale=0.8]

\begin{axis}[%
width=1\textwidth,
height=0.8\textwidth,
at={(1.054in,0.642in)},
scale only axis,
xmode=log,
xmin=0.000620568933378345,
xmax=0.0314731352948542,
xminorticks=true,
xlabel style={font=\color{white!15!black}},
xlabel={$\delta$},
ymode=log,
ymin=4.6008533266987e-10,
ymax=0.0490728823757276,
ytick={ 1e-08,  1e-06, 0.0001,   0.01},
yminorticks=true,
ylabel style={font=\color{white!15!black}},
ylabel={Error},
axis background/.style={fill=white},
xmajorgrids,
ymajorgrids,
legend style={at={(0.03,0.5)}, anchor=west, legend cell align=left, align=left, draw=white!15!black}
]
\addplot [color=mycolor1, mark size=5.0pt, mark=o, mark options={solid, mycolor1}]
  table[row sep=crcr]{%
0.025	9.09609857423813e-10\\
0.0125	9.75069621726222e-10\\
0.00625	8.6216356166285e-10\\
0.003125	8.25717635762351e-10\\
0.0015625	8.20637695860758e-10\\
0.00078125	8.18160273947738e-10\\
};
\addlegendentry{disc}

\addplot [color=mycolor2, mark size=3pt, mark=square, mark options={solid, mycolor2}]
  table[row sep=crcr]{%
0.025	8.73139481869383e-10\\
0.0125	8.75527452324404e-10\\
0.00625	8.54612575532191e-10\\
0.003125	8.39056426825817e-10\\
0.0015625	8.36365150278098e-10\\
0.00078125	8.35001211497116e-10\\
};
\addlegendentry{square}

\addplot [color=mycolor3, mark size=3pt, mark=triangle, mark options={solid, mycolor3}]
  table[row sep=crcr]{%
0.025	1.04860553067218e-09\\
0.0125	1.10257567166789e-09\\
0.00625	1.00587449450966e-09\\
0.003125	9.91722754009408e-10\\
0.0015625	9.94185353794202e-10\\
0.00078125	9.95407844059642e-10\\
};
\addlegendentry{Pac-Man}

\addplot [color=mycolor4, mark size=3pt, mark=triangle, mark options={solid, rotate=270, mycolor4}]
  table[row sep=crcr]{%
0.025	0.0275957097060768\\
0.0125	0.0110391735548951\\
0.00625	0.0064216039382446\\
0.003125	0.00322758983854714\\
0.0015625	0.00224875326277236\\
0.00078125	0.000810082710426935\\
};
\addlegendentry{star}

\addplot [color=black, dashed]
  table[row sep=crcr]{%
0.025	0.0125\\
0.0125	0.00625\\
0.00625	0.003125\\
0.003125	0.0015625\\
0.0015625	0.00078125\\
0.00078125	0.000390625\\
};
\addlegendentry{${\delta}$}

\end{axis}
\end{tikzpicture}%
\end{subfigure}
\hfill
\begin{subfigure}{0.49\textwidth}
\centering
%
%
\definecolor{mycolor1}{rgb}{0.00000,0.44700,0.74100}%
\definecolor{mycolor2}{rgb}{0.85000,0.32500,0.09800}%
\definecolor{mycolor3}{rgb}{0.92900,0.69400,0.12500}%
\definecolor{mycolor4}{rgb}{0.49400,0.18400,0.55600}%
\begin{tikzpicture}[scale=0.8]

\begin{axis}[%
width=1\textwidth,
height=0.8\textwidth,
at={(1.054in,0.642in)},
scale only axis,
xmode=log,
xmin=3.16227766016838e-09,
xmax=3.16227766016838,
xminorticks=true,
xtick={1e-08,1e-06,0.0001,0.01,1},
xticklabels={{$10^{-8}$},{$10^{-6}$},{$10^{-4}$},{$10^{-2}$},{$10^0$}},
xlabel style={font=\color{white!15!black}},
xlabel={$\varepsilon$},
ymode=log,
ymin=4.60085332672989e-10,
ymax=0.253285988540525,
ytick={ 1e-08,  1e-06, 0.0001,   0.01},
yminorticks=true,
ylabel style={font=\color{white!15!black}},
ylabel={Error},
axis background/.style={fill=white},
xmajorgrids,
ymajorgrids,
legend style={at={(0.97,0.03)}, anchor=south east, legend cell align=left, align=left, draw=white!15!black}
]
\addplot [color=mycolor1, mark size=5.0pt, mark=o, mark options={solid, mycolor1}]
  table[row sep=crcr]{%
1	0.0965598452936105\\
0.1	0.00843698705229274\\
0.01	0.000820738138194139\\
0.001	8.18418785893827e-05\\
0.0001	8.18186692276003e-06\\
1e-05	8.18163483485796e-07\\
1e-06	8.18161229112806e-08\\
1e-07	8.18161145815149e-09\\
1e-08	8.18160273953284e-10\\
};
\addlegendentry{disc}

\addplot [color=mycolor2, mark size=3pt, mark=square, mark options={solid, mycolor2}]
  table[row sep=crcr]{%
1	0.0908444024158343\\
0.1	0.00851279541804094\\
0.01	0.000836680665757519\\
0.001	8.35170440690219e-05\\
0.0001	8.35018960968788e-06\\
1e-05	8.35003810574725e-07\\
1e-06	8.35002338801173e-08\\
1e-07	8.35002277087482e-09\\
1e-08	8.35001211500054e-10\\
};
\addlegendentry{square}

\addplot [color=mycolor3, mark size=3pt, mark=triangle, mark options={solid, mycolor3}]
  table[row sep=crcr]{%
1	0.124174262827624\\
0.1	0.0105018271114606\\
0.01	0.00100082110707407\\
0.001	9.95950642183895e-05\\
0.0001	9.95464488739122e-06\\
1e-05	9.95415877973437e-07\\
1e-06	9.95410902413201e-08\\
1e-07	9.95411296108619e-09\\
1e-08	9.95407844059642e-10\\
};
\addlegendentry{Pac-Man}

\addplot [color=mycolor4, mark size=3pt, mark=triangle, mark options={solid, rotate=270, mycolor4}]
  table[row sep=crcr]{%
1	0.142433178448027\\
0.1	0.0128630846835854\\
0.01	0.00145789508268221\\
0.001	0.000818707159623978\\
0.0001	0.000810139656040533\\
1e-05	0.000810080137327749\\
1e-06	0.00081008217761613\\
1e-07	0.000810082429227165\\
1e-08	0.000810082710426935\\
};
\addlegendentry{star}

\addplot [color=black, dashed]
  table[row sep=crcr]{%
1	0.05\\
0.316227766016838	0.0158113883008419\\
0.1	0.005\\
0.0316227766016838	0.00158113883008419\\
0.01	0.0005\\
0.00316227766016838	0.000158113883008419\\
0.001	5e-05\\
0.000316227766016838	1.58113883008419e-05\\
0.0001	5e-06\\
3.16227766016838e-05	1.58113883008419e-06\\
1e-05	5e-07\\
3.16227766016838e-06	1.58113883008419e-07\\
1e-06	5e-08\\
3.16227766016838e-07	1.58113883008419e-08\\
1e-07	5e-09\\
3.16227766016838e-08	1.58113883008419e-09\\
1e-08	5e-10\\
};
\addlegendentry{$\varepsilon$}

\end{axis}
\end{tikzpicture}%
\end{subfigure}
\caption{Four simple shapes.
The error $\|u_\delta - \Pi_{1}^{\varepsilon}[u_\delta](u_\delta)\|_{L^2(\Om)}$.
Left: the error as a function of $\delta$ for fixed $\varepsilon = 10^{-8}$.
Right: the error as a function of $\varepsilon$ for fixed mesh-size $\delta = 0.05/2^{6}$.}
\label{fig:NumEx.SimpleShapes.ErrorEpsilon}
\end{figure}

In Figure \ref{fig:NumEx.SimpleShapes.ErrorDelta}, we show the error ${\|u - \Pi_{1}^{\varepsilon}[u_\delta](u)\|_{L^2(\Om)}}$.
The left frame shows the error for varying mesh-size $\delta$ but fixed $\varepsilon = 10^{-8}$.
For all four shapes, the error decays as $\mathcal{O}(\sqrt{\delta})$, as proved in Theorem \ref{thm:main}.
The right frame of Figure \ref{fig:NumEx.SimpleShapes.ErrorDelta} shows the error $\|u - \Pi_{1}^{\varepsilon}[u_\delta](u)\|_{L^2(\Om)}$ for varying $\varepsilon$ on the fixed finest mesh, i.e., with smallest $\del$.
The error initially decreases with $\varepsilon$ but then levels off at about $10^{-2}$, at which point it can only be improved by further refining the mesh.

To eliminate the interpolation error and thereby illustrate the estimates of Remark \ref{rem:thm:main}, we show in Figure \ref{fig:NumEx.SimpleShapes.ErrorEpsilon} the projection error $\|u_{\delta} - \Pi_{1}^{\varepsilon}[u_\delta](u_{\delta})\|_{L^2(\Om)}$.
On the left, we show the approximation error for varying $\delta$, with $\varepsilon = 10^{-8}$ fixed:
The projections of the disc, the square, and the Pac-Man in the AS basis are remarkably good, with errors at about $10^{-9}$.
For these cases, the projection of each $u_\delta$ (hence the first eigenfunction $\varphi_1$ of $L_\veps[u_\delta]$) essentially coincides with $u_\delta$ itself.
In contrast, the error for the star is larger, though it decays at a rate of $\mathcal{O}(\delta)$, still faster than the upper estimate of $\mathcal{O}(\sqrt{\delta})$ in Remark \ref{rem:thm:main}.
In all cases, the errors here are significantly smaller than those in the left frame of Figure \ref{fig:NumEx.SimpleShapes.ErrorDelta}, indicating that the errors in Figure \ref{fig:NumEx.SimpleShapes.ErrorDelta} are mainly due to interpolating $u$ in $\cV^\delta$.

The error $\|u_{\delta} - \Pi_{1}^{\varepsilon}[u_\delta](u_{\delta})\|_{L^2(\Om)}$ for varying $\varepsilon$ and fixed $\delta$ is shown in the right frame of Figure \ref{fig:NumEx.SimpleShapes.ErrorEpsilon}.
Here we observe a decay rate of $\mathcal{O}(\varepsilon)$, which is also faster than the upper estimate in Remark \ref{rem:thm:main}.
Here, for all shapes but the star, the error decreases with $\varepsilon$ down to about $10^{-9}$.
In contrast, the error for the star levels off at about~$10^{-3}$.

The significant difference in the behavior of the error for the star compared to the other shapes, shown in Figure \ref{fig:NumEx.SimpleShapes.ErrorEpsilon}, is due to the geometry of the discontinuities in the media and the mesh.
Indeed, if we repeat the experiment for the star but with a locally adapted mesh aligned with the star's geometry, as shown in Figure \ref{fig:NumEx.SimpleShapes.Star}, the error $\|u_{\delta} - \Pi_{1}^{\varepsilon}[u_\delta](u_{\delta})\|_{L^2(\Om)}$ also drops below~$10^{-8}$.
Note that while $\del$ is smaller in this test than it is in the tests shown in Figure \ref{fig:NumEx.SimpleShapes.ErrorEpsilon}, this reduction by itself is not sufficient to explain the difference in the errors between figures \ref{fig:NumEx.SimpleShapes.ErrorEpsilon} and \ref{fig:NumEx.SimpleShapes.Star}, which is of about 6 orders of magnitude.
\begin{figure}[t]
\centering
\begin{subfigure}{0.49\textwidth}
\centering
\includegraphics[width=0.75\linewidth]{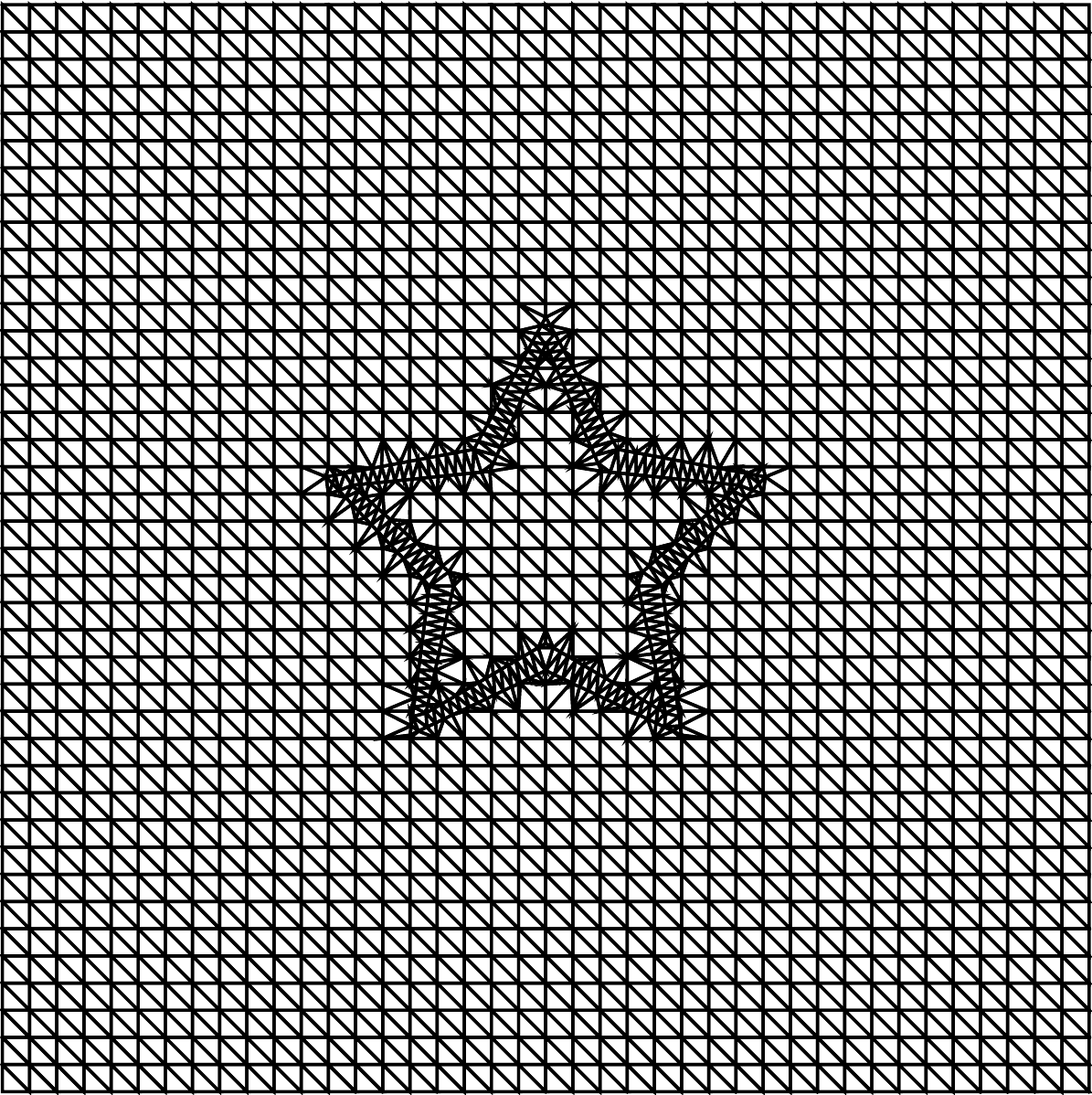}
\end{subfigure}\hfill
\begin{subfigure}{0.49\textwidth}
\centering
%
%
\definecolor{mycolor1}{rgb}{0.00000,0.44700,0.74100}%
\definecolor{mycolor2}{rgb}{0.85000,0.32500,0.09800}%
\definecolor{mycolor3}{rgb}{0.92900,0.69400,0.12500}%
\definecolor{mycolor4}{rgb}{0.49400,0.18400,0.55600}%
\begin{tikzpicture}[scale=0.8]

\begin{axis}[%
width=1\textwidth,
height=0.8\textwidth,
at={(1.054in,0.642in)},
scale only axis,
xmode=log,
xmin=0.000620568933378345,
xmax=0.0314731352948542,
xminorticks=true,
xlabel style={font=\color{white!15!black}},
xlabel={$\delta$},
ymode=log,
ymin=6.03873970818284e-10,
ymax=0.0444569852509731,
ytick={ 1e-08,  1e-06, 0.0001,   0.01},
yminorticks=true,
ylabel style={font=\color{white!15!black}},
ylabel={Error},
axis background/.style={fill=white},
xmajorgrids,
ymajorgrids,
legend style={at={(0.03,0.5)}, anchor=west, legend cell align=left, align=left, draw=white!15!black}
]
\addplot [color=mycolor4, mark size=3pt, mark=triangle, mark options={solid, rotate=270, mycolor4}]
  table[row sep=crcr]{%
0.025	8.47283812397147e-09\\
0.0125	4.28200132087559e-09\\
0.00625	3.00091097617177e-09\\
0.003125	1.45946913320421e-09\\
0.0015625	1.20034704880217e-09\\
0.00078125	1.08771791047698e-09\\
};
\addlegendentry{star}

\addplot [color=black, dashed]
  table[row sep=crcr]{%
0.025	0.025\\
0.0125	0.0125\\
0.00625	0.00625\\
0.003125	0.003125\\
0.0015625	0.0015625\\
0.00078125	0.00078125\\
};
\addlegendentry{${\delta}$}

\end{axis}
\end{tikzpicture}%
\end{subfigure}
\caption{Four simple shapes. Left: The aligned mesh for the star-shaped medium with $\delta = 0.05/2^{2}$.
Right: the error $\|u_{\delta} - \Pi_{1}^{\varepsilon}[u_\delta](u_{\delta})\|_{L^2(\Om)}$ for mesh-sizes $\delta = 0.05/2^{m}$, $m = 1, \ldots, 6$, and fixed $\varepsilon = 10^{-8}$.}
\label{fig:NumEx.SimpleShapes.Star}
\end{figure}

\subsection{Nonuniform background}

\begin{figure}[t]
\centering
\begin{subfigure}[c]{0.32\textwidth}
\centering
\includegraphics[height=0.9\linewidth]{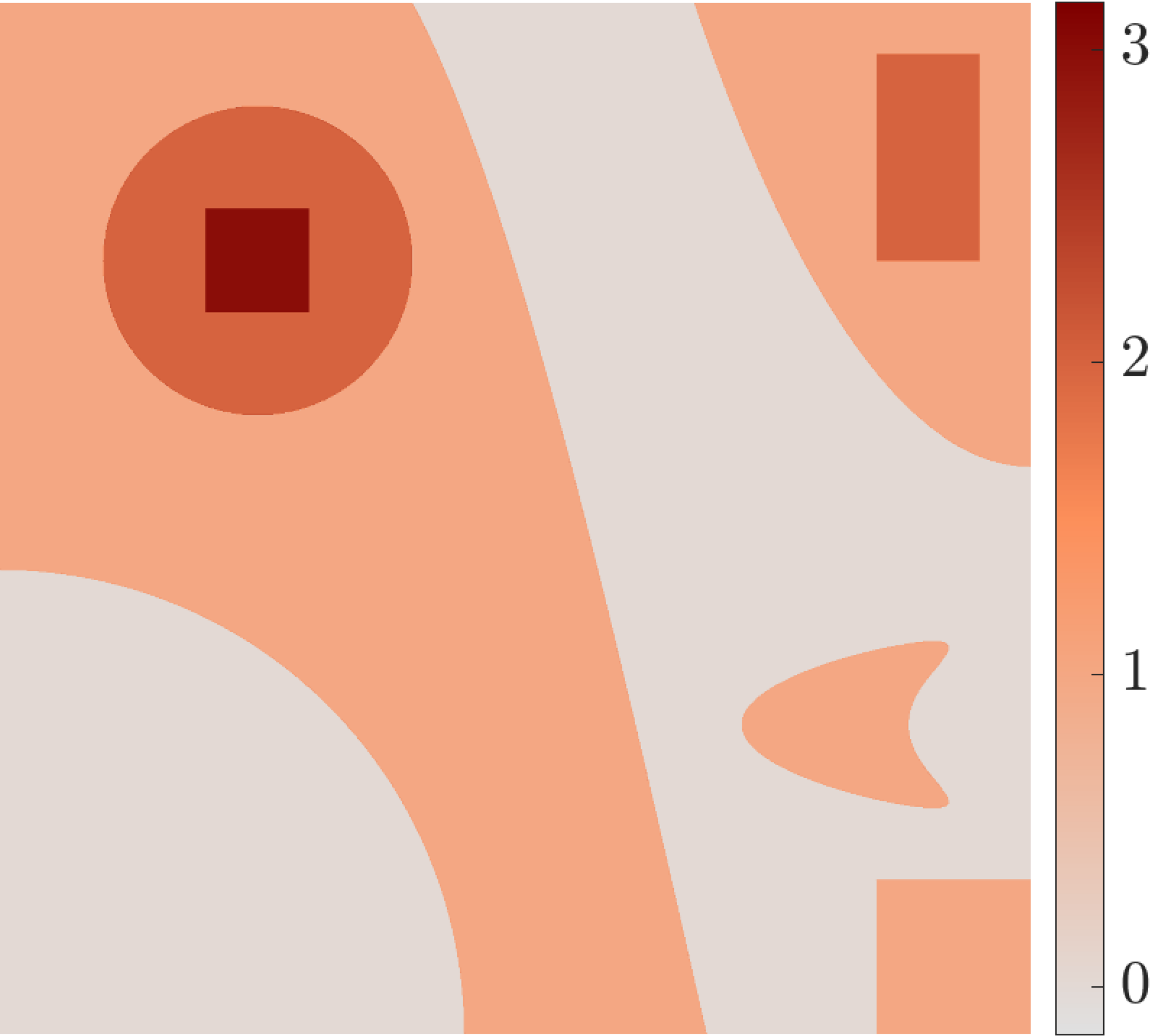}
\caption{The medium $u$ (or $u_{\delta}$)}
\label{subfig:NumEx.ComplexShape.Exact}
\end{subfigure}\hfill
\begin{subfigure}[c]{0.32\textwidth}
\centering
\includegraphics[height=0.9\linewidth]{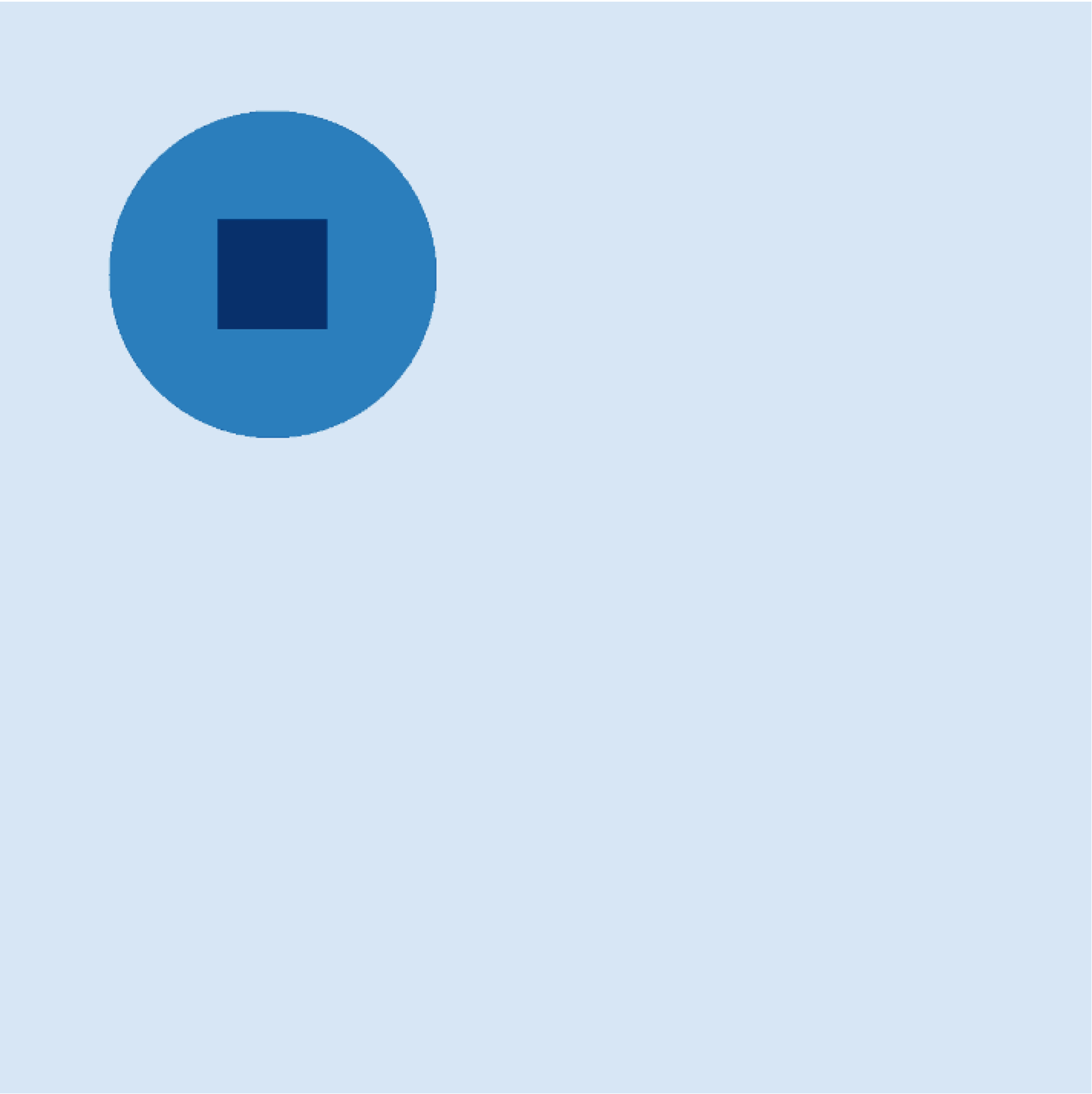}
\caption{$\varphi_{1}$ with $\lambda_{1} \approx 14.37$}
\end{subfigure}\hfill
\begin{subfigure}[c]{0.32\textwidth}
\centering
\includegraphics[height=0.9\linewidth]{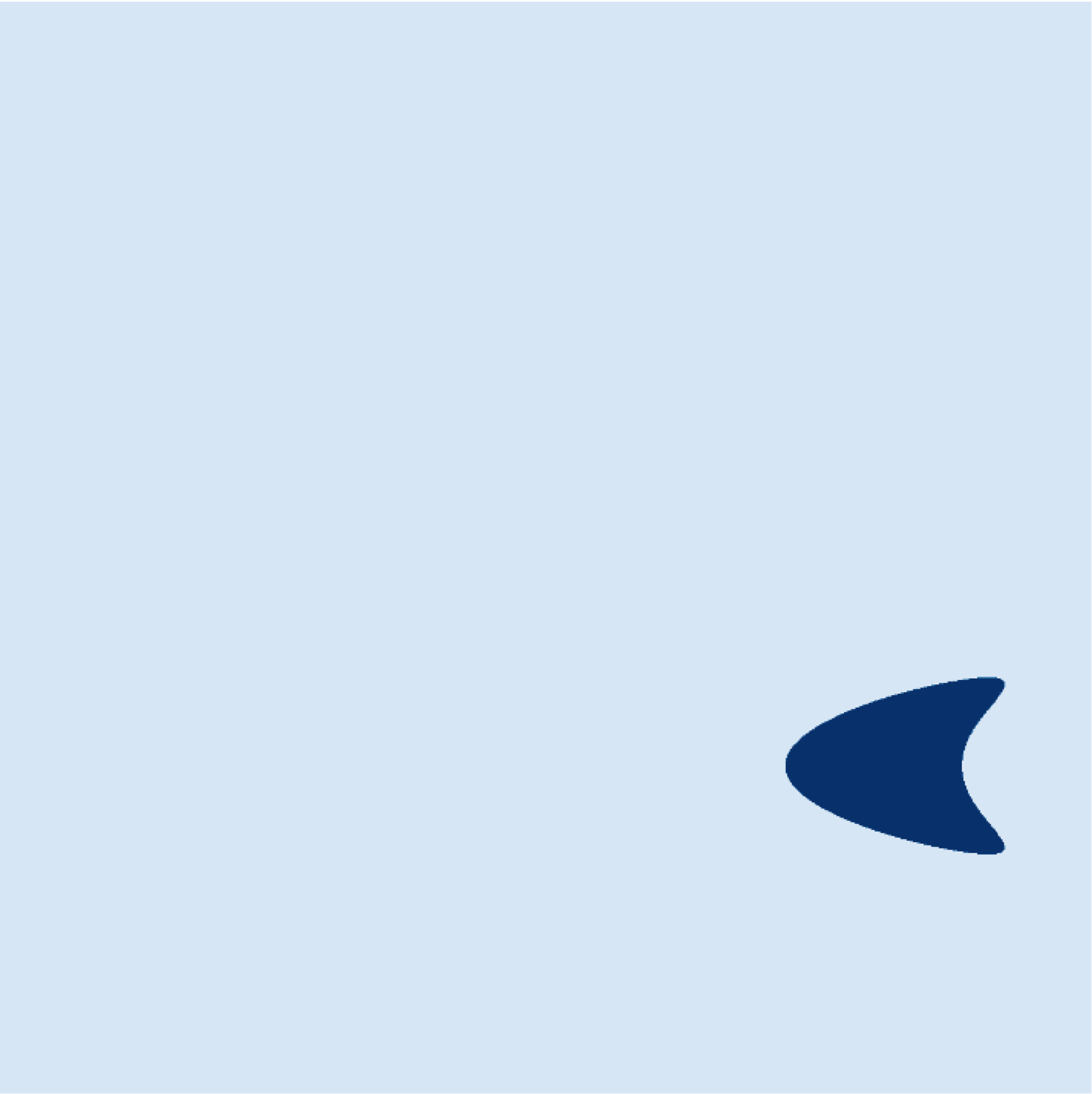}
\caption{$\varphi_{3}$ with $\lambda_{3} \approx 36.04$}
\end{subfigure}
\\[0.7em]
\begin{subfigure}[c]{0.32\textwidth}
\centering
\includegraphics[height=0.9\linewidth]{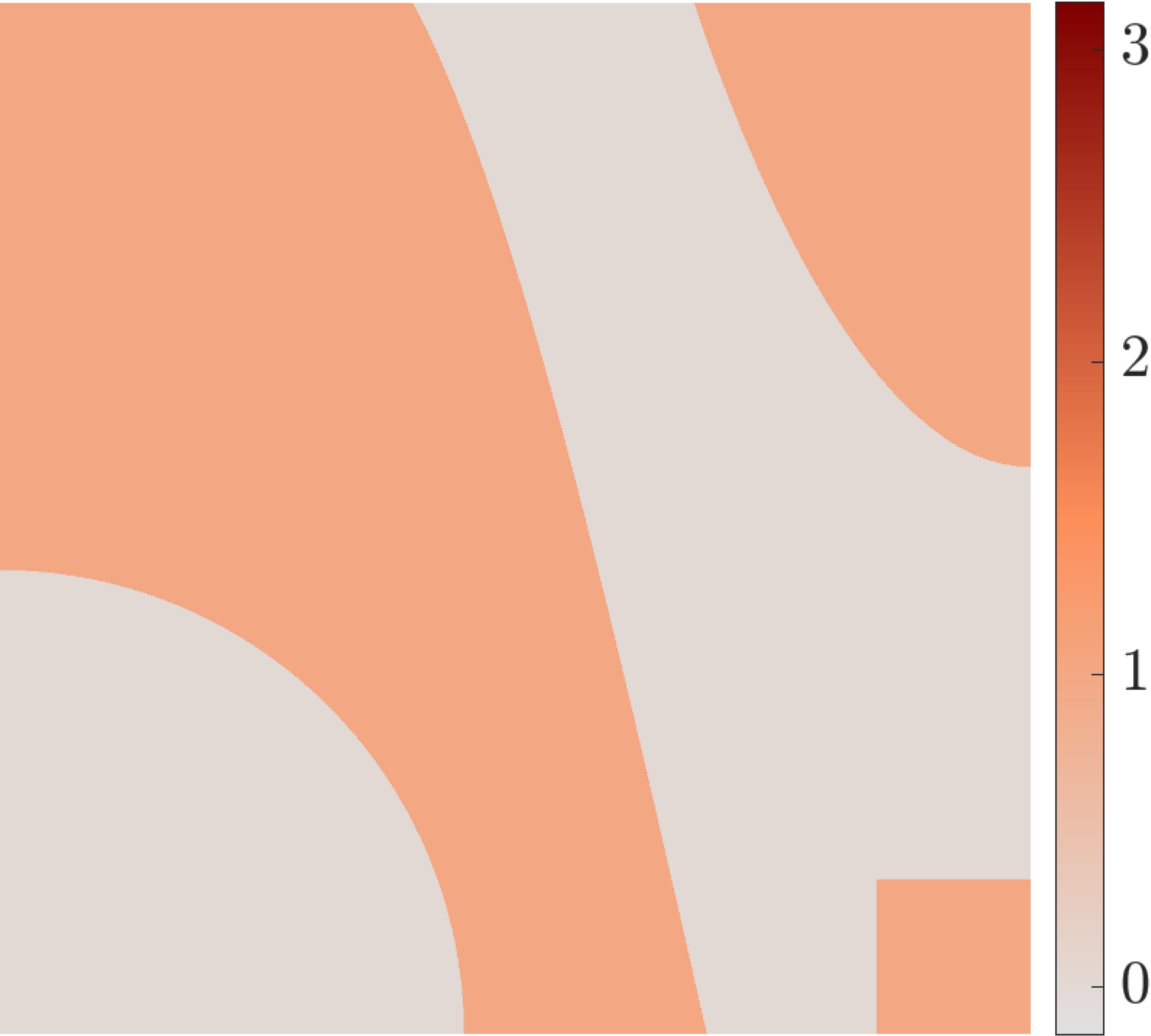}
\caption{$\varphi_{0}$}
\end{subfigure}\hfill
\begin{subfigure}[c]{0.32\textwidth}
\centering
\includegraphics[height=0.9\linewidth]{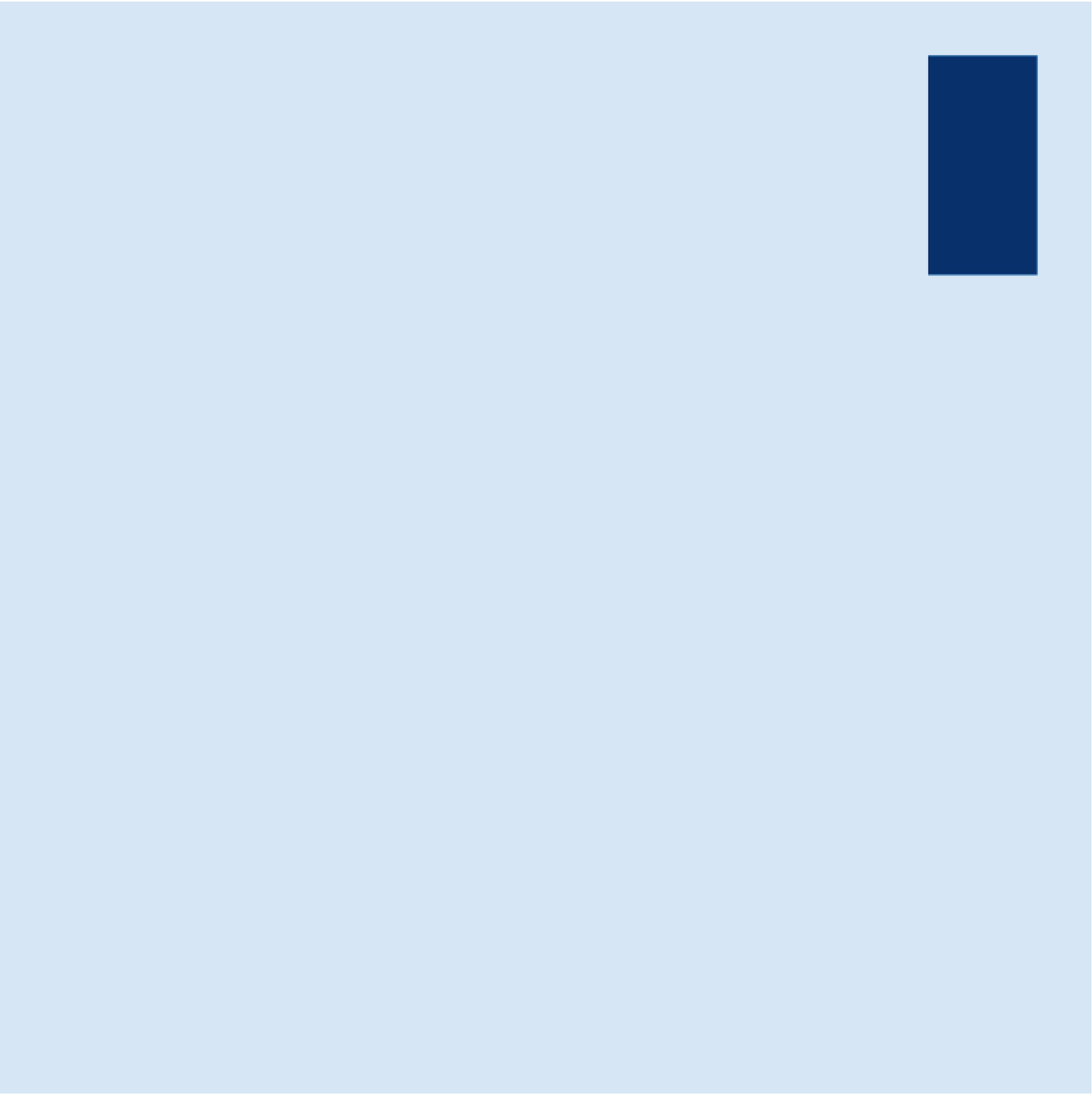}
\caption{$\varphi_{2}$ with $\lambda_{2} \approx 29.88$}
\end{subfigure}\hfill
\begin{subfigure}[c]{0.32\textwidth}
\centering
\includegraphics[height=0.9\linewidth]{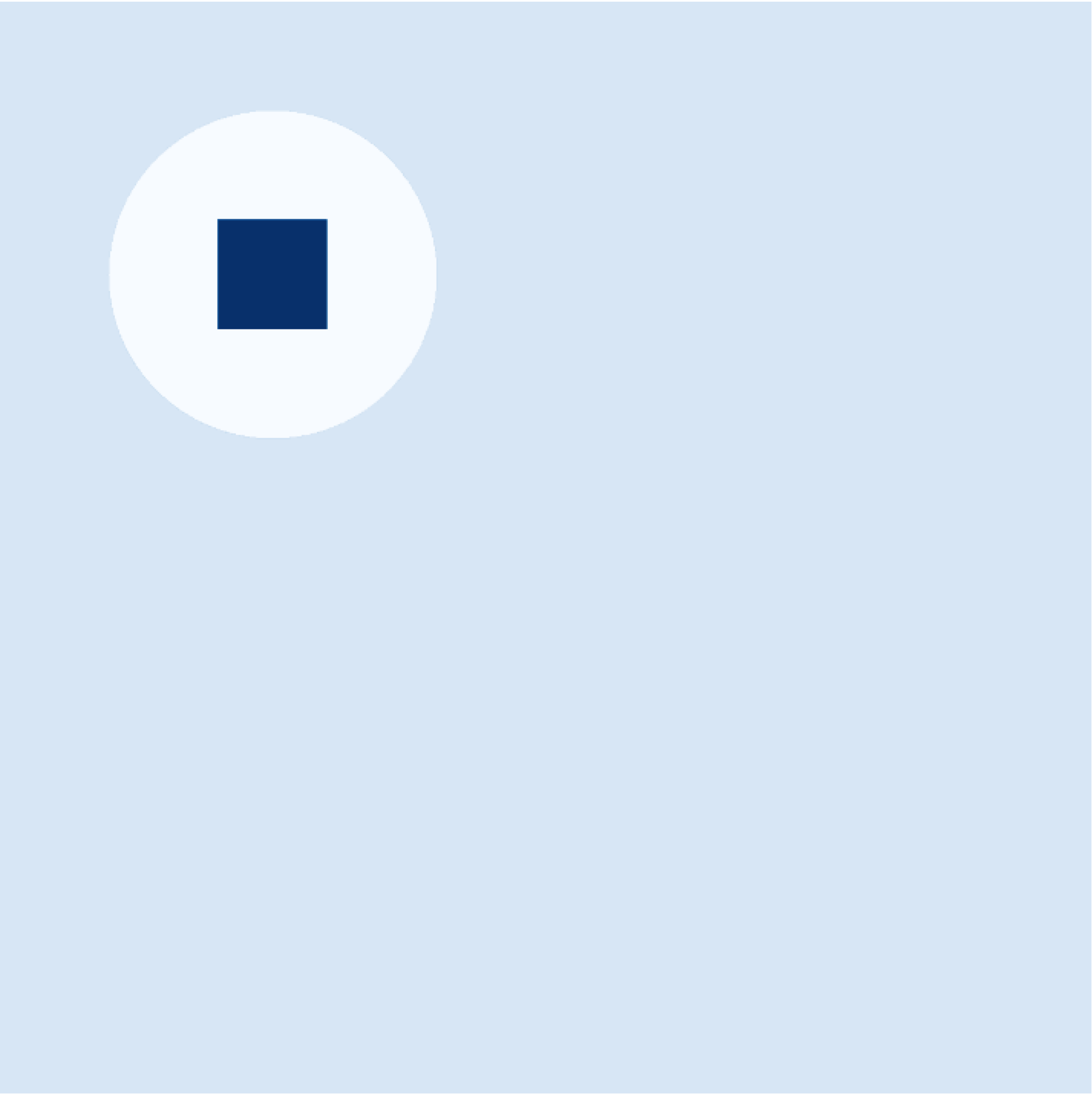}
\caption{$\varphi_{4}$ with $\lambda_{4} \approx 50.48$}
\end{subfigure}
\caption{Nonuniform background. The exact medium u with its background $\varphi_{0}$ and first four eigenpairs $(\lambda_{i},\varphi_{i})$, $i=1,\ldots,4$.}
\label{fig:NumEx.ComplexShape}
\end{figure}

Next we consider a medium $u$ with non-constant background $u^{0}$.
We let ${u:\Omega\to\mathbb{R}}$ be the medium shown in frame (a) of Fig.\ \ref{fig:NumEx.ComplexShape}, and $\Omega = (0,1)^{2}$.
Here $u$ admits a decomposition \eqref{eq:u_decomp}, \eqref{eq:u0_wtu} with ${M = 5}$ and ${K = 4}$.
Figure \ref{fig:NumEx.ComplexShape} also shows the approximation $\varphi_{0}$ of the background and the first four eigenfunctions $\vphi_1,\ldots,\vphi_4$ of $L_\veps[u_\delta]$.

Figure \ref{fig:NumEx.ComplexShape.Error} (left) shows the error $\|u - Q_{K}^{\varepsilon}[u_\delta](u)\|_{L^2(\Om)}$ with $K=4$, for six different meshes with $\delta = 0.05/2^{m}$, $m=1,\ldots,6$.
Here we observe an error decay of $\cO(\sqrt{\del})$, consistent with our theoretical estimates.
The right frame of Figure \ref{fig:NumEx.ComplexShape.Error} shows the error $\|u_{\delta} - Q_{K}^{\varepsilon}[u_\delta](u_{\delta})\|_{L^2(\Om)}$ with $K=4$, as a function of $\veps$ with fixed $\delta = 0.05/2^{6}$. Again, we observe a convergence rate of $\mathcal{O}(\varepsilon)$, faster than the $\mathcal{O}(\sqrt{\varepsilon})$ rate proved in Remark \ref{rem:thm:main}.

\begin{figure}[t]
\centering
\begin{subfigure}{0.49\textwidth}
\centering
%
%
\definecolor{mycolor1}{rgb}{0.00000,0.44700,0.74100}%
\begin{tikzpicture}[scale=0.8]

\begin{axis}[%
width=1\textwidth,
height=0.8\textwidth,
at={(1.054in,0.642in)},
scale only axis,
xmode=log,
xmin=0.000696289795416989,
xmax=0.0280504613575491,
xminorticks=true,
xlabel style={font=\color{white!15!black}},
xlabel={$\delta$},
ymode=log,
ymin=0.0243880596088015,
ymax=0.1710195442934,
ytick={0.01,0.0316227766016838,0.1},
yticklabels={{$10^{-2}$},{$10^{-1.5}$},{$10^{-1}$}},
yminorticks=true,
ylabel style={font=\color{white!15!black}},
ylabel={Error},
axis background/.style={fill=white},
xmajorgrids,
ymajorgrids,
legend style={at={(0.03,0.97)}, anchor=north west, legend cell align=left, align=left, draw=white!15!black}
]
\addplot [color=mycolor1, mark size=4.0pt, mark=o, mark options={solid, mycolor1}]
  table[row sep=crcr]{%
0.025	0.152421329290698\\
0.0125	0.108839935066193\\
0.00625	0.0779047793743842\\
0.003125	0.054856772214692\\
0.0015625	0.038750266508334\\
0.00078125	0.0273638529456916\\
};
\addlegendentry{medium}

\addplot [color=black, dashed]
  table[row sep=crcr]{%
0.025	0.142302494707577\\
0.0125	0.100623058987491\\
0.00625	0.0711512473537885\\
0.003125	0.0503115294937453\\
0.0015625	0.0355756236768943\\
0.00078125	0.0251557647468726\\
};
\addlegendentry{$\sqrt{\delta}$}

\end{axis}
\end{tikzpicture}%
\end{subfigure}\hfill
\begin{subfigure}{0.49\textwidth}
\centering
%
%
\definecolor{mycolor1}{rgb}{0.00000,0.44700,0.74100}%
\begin{tikzpicture}[scale=0.8]

\begin{axis}[%
width=1\textwidth,
height=0.8\textwidth,
at={(1.054in,0.642in)},
scale only axis,
xmode=log,
xmin=5.62341325190349e-09,
xmax=1.77827941003892,
xtick={1e-08,1e-06,0.0001,0.01,1},
xticklabels={{$10^{-8}$},{$10^{-6}$},{$10^{-4}$},{$10^{-2}$},{$10^0$}},
xminorticks=true,
xlabel style={font=\color{white!15!black}},
xlabel={$\varepsilon$},
ymode=log,
ymin=2.11135529485285e-09,
ymax=0.345605515179625,
yminorticks=true,
ylabel style={font=\color{white!15!black}},
ylabel={Error},
axis background/.style={fill=white},
xmajorgrids,
ymajorgrids,
legend style={at={(0.03,0.97)}, anchor=north west, legend cell align=left, align=left, draw=white!15!black}
]
\addplot [color=mycolor1, mark size=4.0pt, mark=o, mark options={solid, mycolor1}]
  table[row sep=crcr]{%
1	0.194348263399203\\
0.1	0.0318492760078309\\
0.01	0.0034110353619469\\
0.001	0.000344025619626833\\
0.0001	3.44383889473692e-05\\
1e-05	3.47888573932799e-06\\
1e-06	3.63739216694582e-07\\
1e-07	3.73367424979791e-08\\
1e-08	3.75457964811348e-09\\
};
\addlegendentry{medium}

\addplot [color=black, dashed]
  table[row sep=crcr]{%
1	1\\
0.1	0.1\\
0.01	0.01\\
0.001	0.001\\
0.0001	0.0001\\
1e-05	1e-05\\
1e-06	1e-06\\
1e-07	1e-07\\
1e-08	1e-08\\
};
\addlegendentry{$\varepsilon$}

\end{axis}
\end{tikzpicture}%
\end{subfigure}
\caption{Nonuniform background. Left: the error $\|u - Q_{4}^{\varepsilon}[u_\delta](u)\|_{L^2(\Om)}$ for mesh-sizes $\delta = 0.05/2^{m}$, $m = 1, \ldots, 6$, and fixed $\varepsilon = 10^{-8}$.
Right: the error $\|u_{\delta} - Q_{4}^{\varepsilon}[u_\delta](u_{\delta})\|_{L^2(\Om)}$ for $\varepsilon = 10^{-m}$, $m = 0,\ldots,8$, and fixed mesh-size $\delta = 0.05/2^6$.}
\label{fig:NumEx.ComplexShape.Error}
\end{figure}
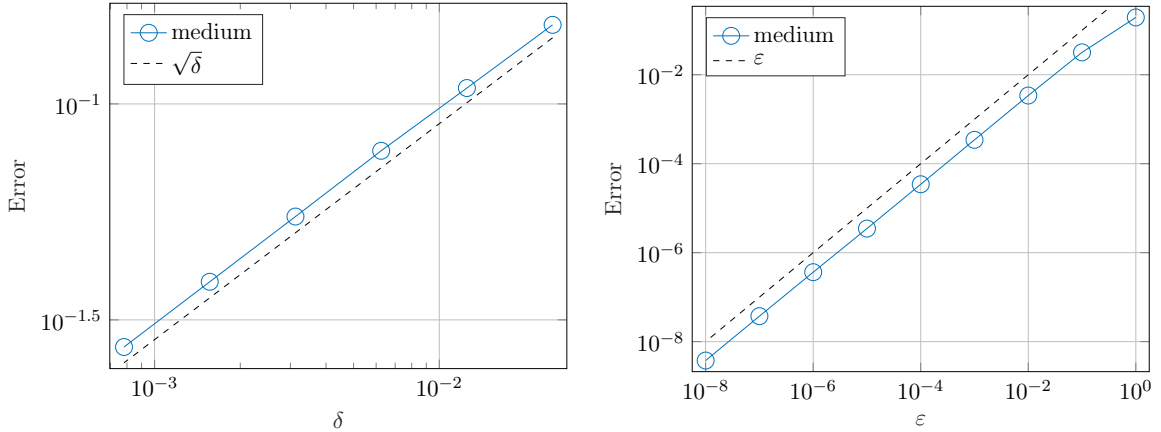

\subsection{Four adjacent Squares}

\begin{figure}[t]
\centering
\begin{subfigure}[c]{0.32\textwidth}
\centering
\includegraphics[height=0.9\linewidth]{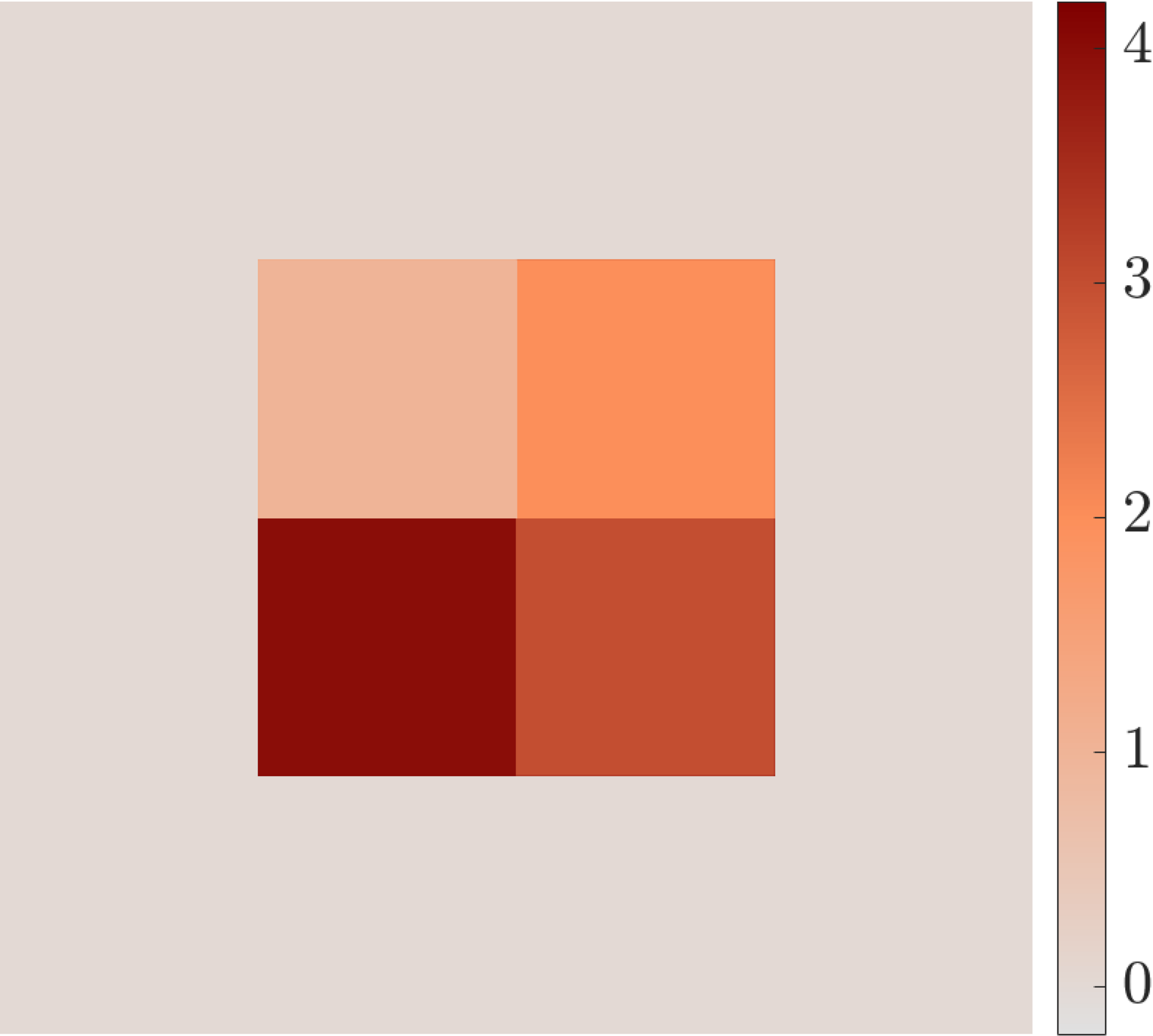}
\caption{The medium $u$ (or $u_{\delta}$)} \label{subfig:NumEx.FourSquares.Medium}
\end{subfigure}\hfill
\begin{subfigure}[c]{0.32\textwidth}
\centering
\includegraphics[height=0.9\linewidth]{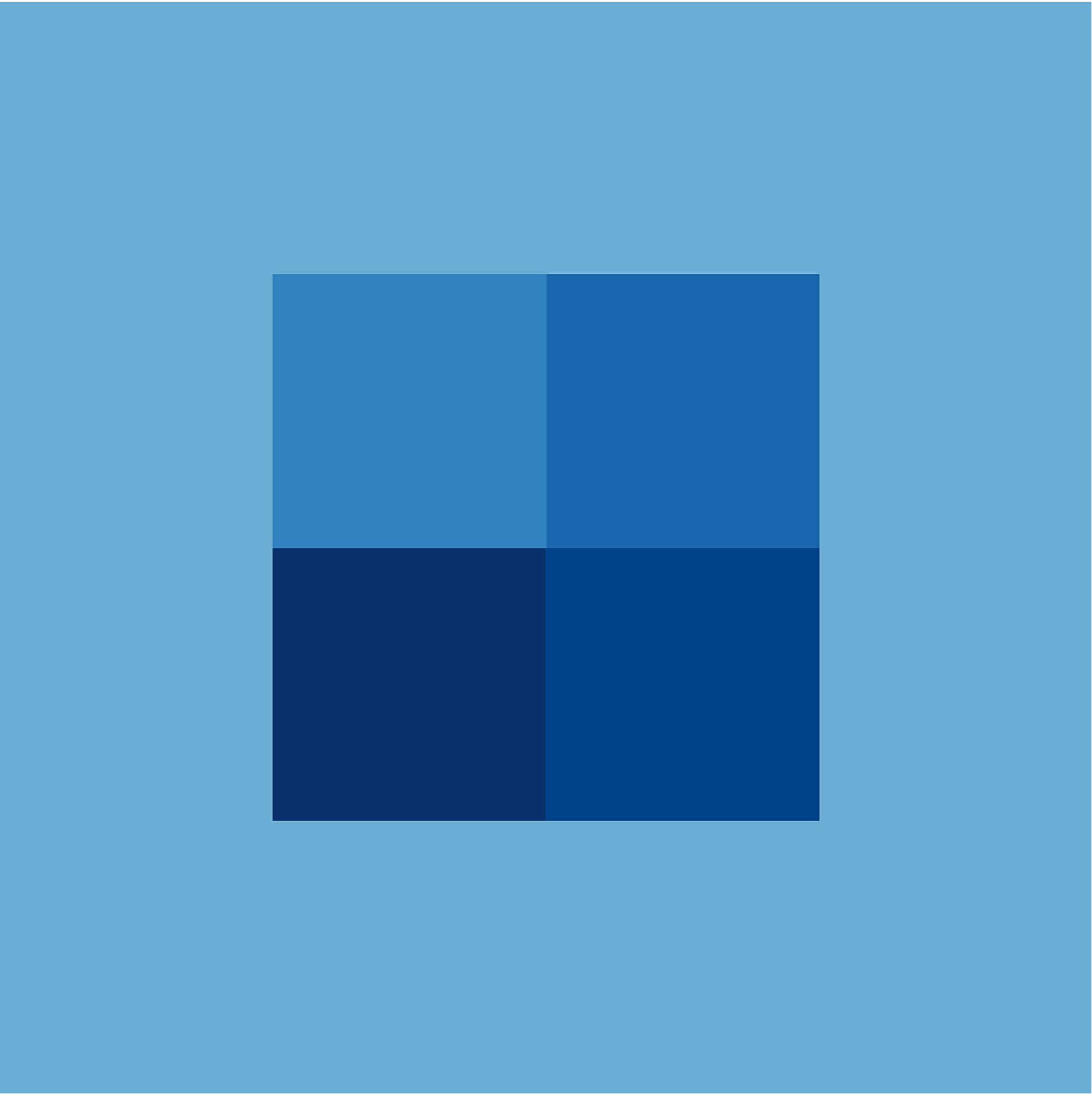}
\caption{$\varphi_{1}$ with $\lambda_{1} \approx 3.3$}
\end{subfigure}\hfill
\begin{subfigure}[c]{0.32\textwidth}
\centering
\includegraphics[height=0.9\linewidth]{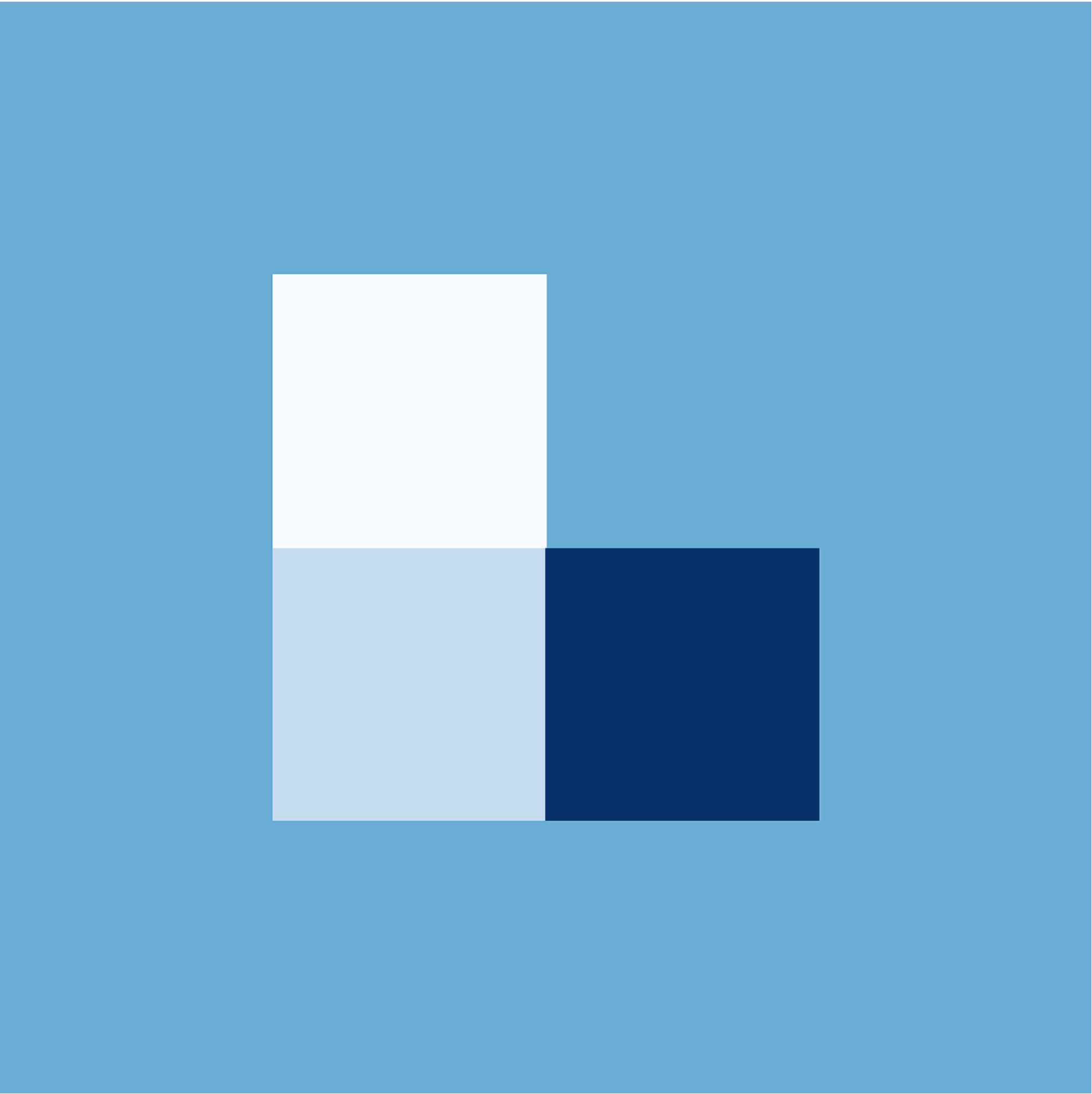}
\caption{$\varphi_{3}$ with $\lambda_{3} \approx 12.65$}
\end{subfigure}
\\[0.7em]
\begin{subfigure}[c]{0.32\textwidth}
\centering
\includegraphics[height=0.9\linewidth]{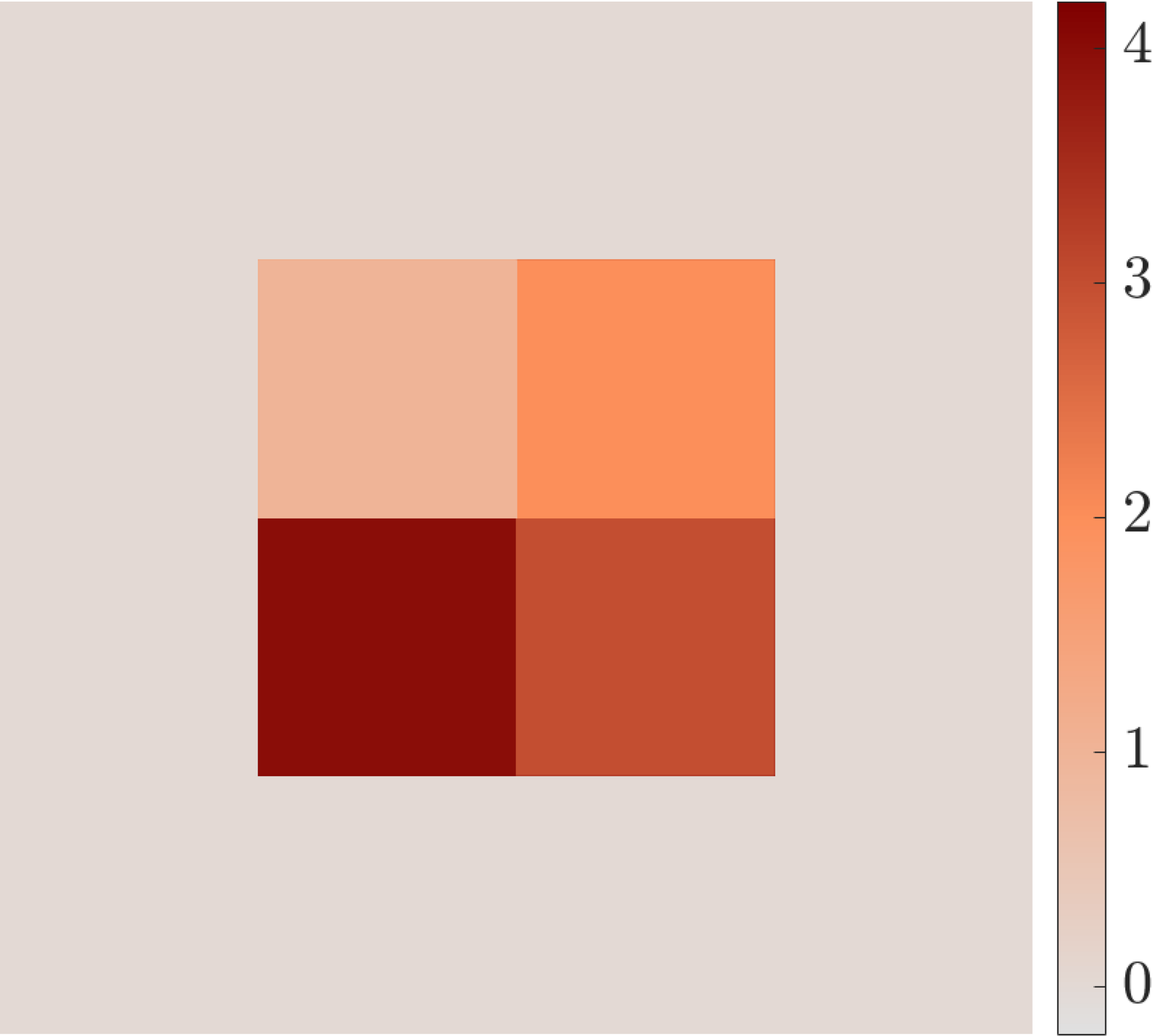}
\caption{$\Pi_{4}^{\varepsilon}[u_\delta]u_{\delta}$} \label{subfig:NumEx.FourSquares.ASdecomposition}
\end{subfigure}\hfill
\begin{subfigure}[c]{0.32\textwidth}
\centering
\includegraphics[height=0.9\linewidth]{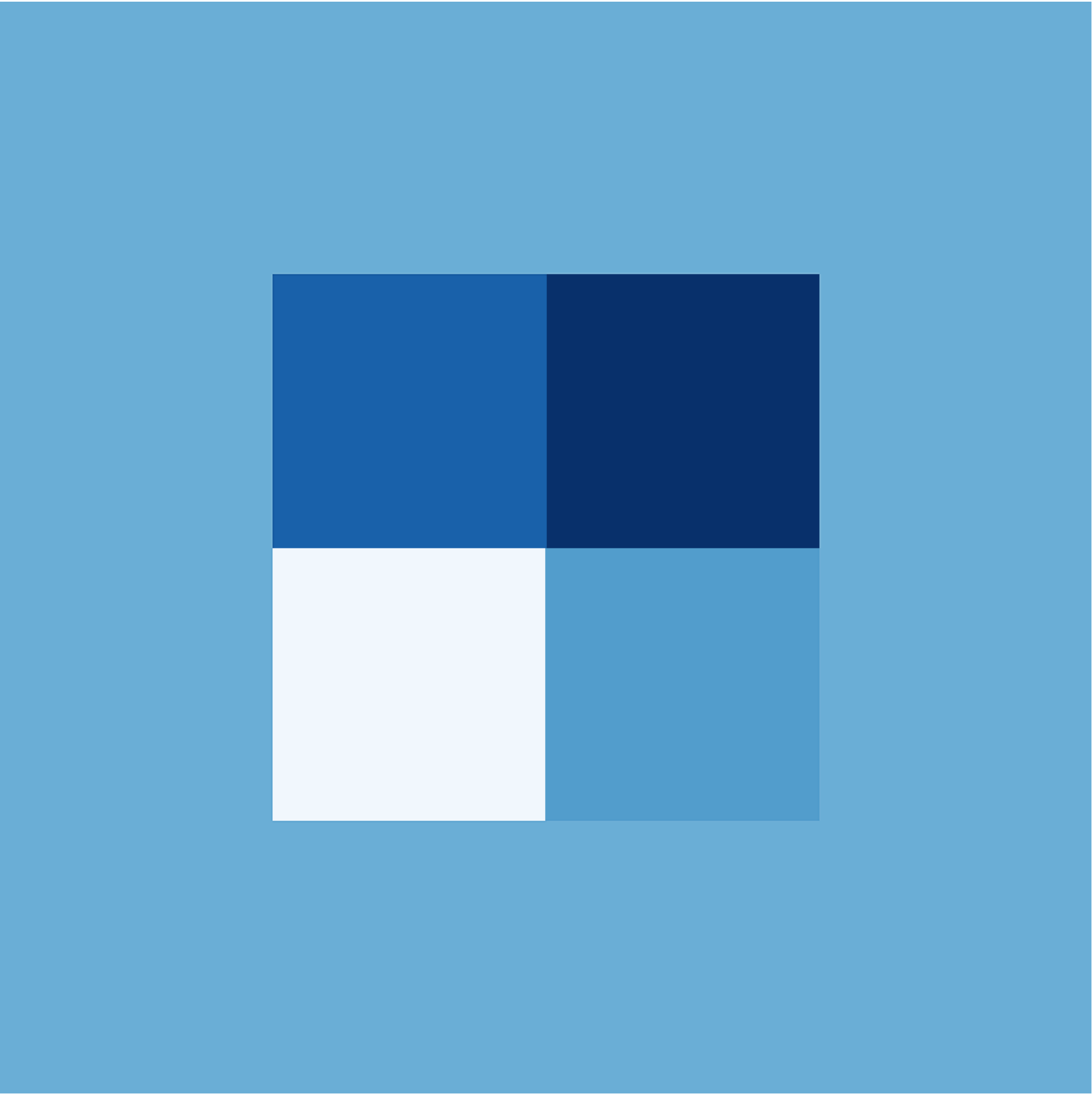}
\caption{$\varphi_{2}$ with $\lambda_{2} \approx 8.88$}
\end{subfigure}\hfill
\begin{subfigure}[c]{0.32\textwidth}
\centering
\includegraphics[height=0.9\linewidth]{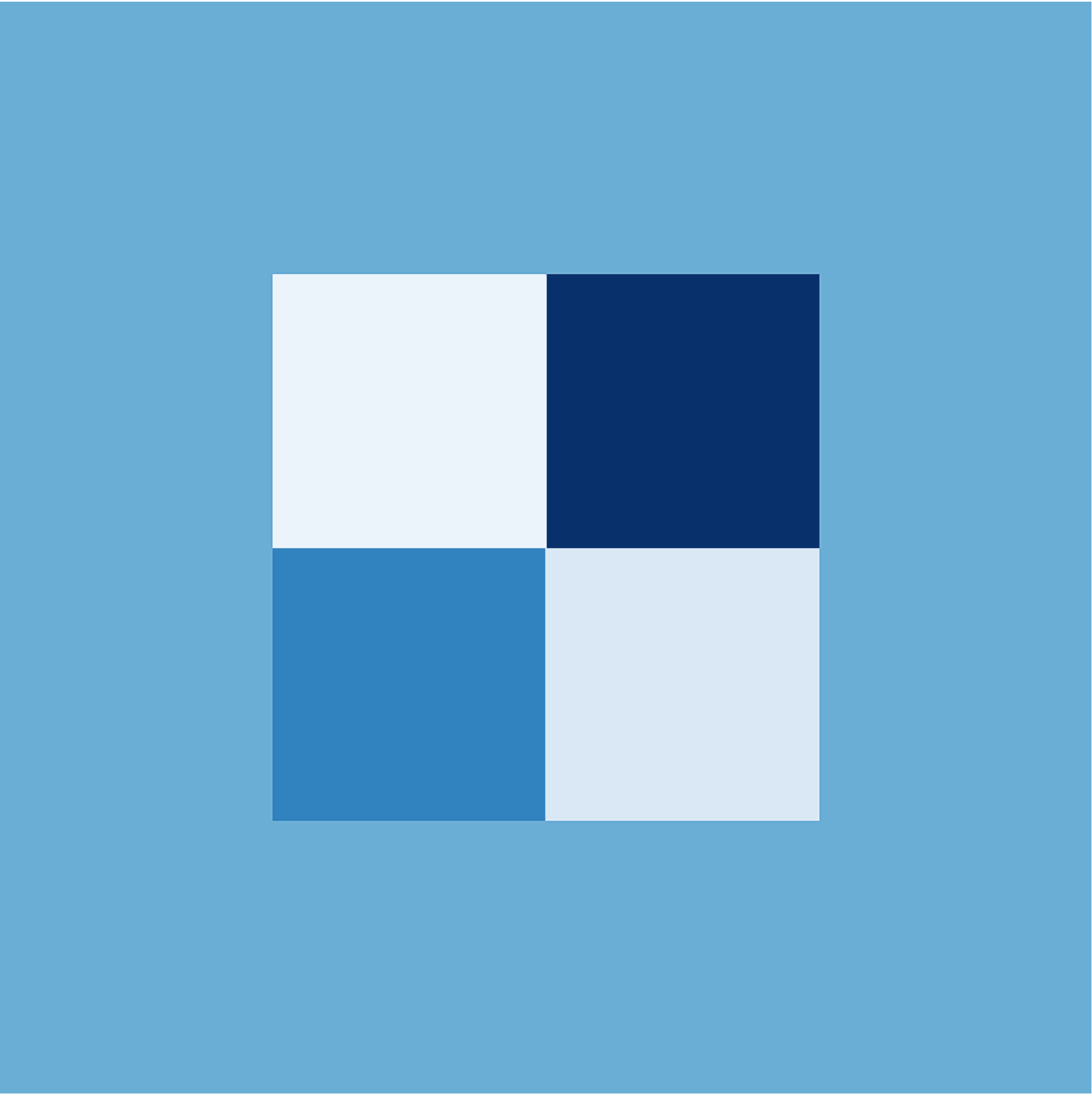}
\caption{$\varphi_{4}$ with $\lambda_{4} \approx 18.37$}
\end{subfigure}
\caption{Adjacent squares. The medium $u$ and the first four eigenfunctions $\varphi_{k}$, $k=1,\ldots,4$, of the operator $L_{\varepsilon}[u_{\delta}]$, together with its AS decomposition $\Pi_{4}^{\varepsilon}[u_\delta](u_{\delta})$ computed on a mesh with $\delta = 0.05/2^{6}$.}
\label{fig:NumEx.FourSquares}
\end{figure}

Let $\Om$ be the unit square $\Omega = (0,1)^{2}$ and
\begin{align}
	u(x) = \sum_{k=1}^{4}\alpha_{k}\chi_{A^{k}}(x), \quad x \in \Omega,
\end{align}
with $\alpha_{k} = k$, for $k = 1, \ldots, 4$, the piecewise constant medium shown in Fig.\ \ref{fig:NumEx.FourSquares}.
Since the boundaries $\partial A^{k}$ of the squares $A^{k}$ are not mutually disjoint, this example is not covered by our analysis.
However, we may still compute the AS approximation and measure the approximation error.

In Figure \ref{fig:NumEx.FourSquares.Error} we still observe errors of $\cO(\sqrt{\del})$, consistent with our theoretical estimates.
Again, the error with respect to $\varepsilon$ decays with a rate of $\mathcal{O}(\varepsilon)$, as seen in Figure~\ref{fig:NumEx.FourSquares.Error}.

\begin{figure}[t]
\centering
\begin{subfigure}{0.49\textwidth}
\centering
%
%
\definecolor{mycolor1}{rgb}{0.00000,0.44700,0.74100}%
\begin{tikzpicture}[scale=0.8]

\begin{axis}[%
width=1\textwidth,
height=0.8\textwidth,
at={(1.054in,0.642in)},
scale only axis,
xmode=log,
xmin=0.000696289795416989,
xmax=0.0280504613575491,
xminorticks=true,
xlabel style={font=\color{white!15!black}},
xlabel={$\delta$},
ymode=log,
ymin=0.0609897642467857,
ymax=0.427687835052746,
ytick={0.1,0.316227766016838},
yticklabels={{$10^{-1}$},{$10^{-0.5}$}},
yminorticks=true,
ylabel style={font=\color{white!15!black}},
ylabel={Error},
axis background/.style={fill=white},
xmajorgrids,
ymajorgrids,
legend style={at={(0.03,0.97)}, anchor=north west, legend cell align=left, align=left, draw=white!15!black}
]
\addplot [color=mycolor1, mark size=4.0pt, mark=o, mark options={solid, mycolor1}]
  table[row sep=crcr]{%
0.025	0.381177184219151\\
0.0125	0.271703379326638\\
0.00625	0.192886848511477\\
0.003125	0.136661182666816\\
0.0015625	0.096729323522213\\
0.00078125	0.0684316410084196\\
};
\addlegendentry{medium}

\addplot [color=black, dashed]
  table[row sep=crcr]{%
0.025	0.316227766016838\\
0.0125	0.223606797749979\\
0.00625	0.158113883008419\\
0.003125	0.111803398874989\\
0.0015625	0.0790569415042095\\
0.00078125	0.0559016994374947\\
};
\addlegendentry{$\sqrt{\delta}$}

\end{axis}
\end{tikzpicture}%
\end{subfigure}\hfill
\begin{subfigure}{0.49\textwidth}
\centering
%
%
\definecolor{mycolor1}{rgb}{0.00000,0.44700,0.74100}%
\begin{tikzpicture}[scale=0.8]

\begin{axis}[%
width=1\textwidth,
height=0.8\textwidth,
at={(1.054in,0.642in)},
scale only axis,
xmode=log,
xmin=5.62341325190349e-09,
xmax=1.77827941003892,
xtick={1e-08,1e-06,0.0001,0.01,1},
xticklabels={{$10^{-8}$},{$10^{-6}$},{$10^{-4}$},{$10^{-2}$},{$10^0$}},
xminorticks=true,
xlabel style={font=\color{white!15!black}},
xlabel={$\varepsilon$},
ymode=log,
ymin=6.52486170217346e-10,
ymax=0.14386692722579,
ytick={1e-08,1e-06,0.0001,0.01},
yticklabels={{$10^{-8}$},{$10^{-6}$},{$10^{-4}$},{$10^{-2}$}},
yminorticks=true,
ylabel style={font=\color{white!15!black}},
ylabel={Error},
axis background/.style={fill=white},
xmajorgrids,
ymajorgrids,
legend style={at={(0.03,0.97)}, anchor=north west, legend cell align=left, align=left, draw=white!15!black}
]
\addplot [color=mycolor1, mark size=4.0pt, mark=o, mark options={solid, mycolor1}]
  table[row sep=crcr]{%
1	0.0809023185072145\\
0.1	0.00892306050068951\\
0.01	0.000901338785228069\\
0.001	9.02268005380683e-05\\
0.0001	9.04166017680455e-06\\
1e-05	9.49848264477137e-07\\
1e-06	1.04414510850724e-07\\
1e-07	1.08933642131198e-08\\
1e-08	1.16030272183266e-09\\
};
\addlegendentry{medium}

\addplot [color=black, dashed]
  table[row sep=crcr]{%
1	1\\
0.1	0.1\\
0.01	0.01\\
0.001	0.001\\
0.0001	0.0001\\
1e-05	1e-05\\
1e-06	1e-06\\
1e-07	1e-07\\
1e-08	1e-08\\
};
\addlegendentry{$\varepsilon$}

\end{axis}
\end{tikzpicture}%
\end{subfigure}
\caption{Adjacent squares. Left: the error $\|u - \Pi_{4}^{\varepsilon}[u_\delta](u)\|_{L^2(\Om)}$ for mesh-sizes $\delta = 0.05/2^{m}$, $m = 1, \ldots, 6$, and fixed $\varepsilon = 10^{-8}$.
Right: the error $\|u_{\delta} - \Pi_{4}^{\varepsilon}[u_\delta](u_{\delta})\|_{L^2(\Om)}$ for $\varepsilon = 10^{-m}$, $m = 0,\ldots,8$, and fixed mesh-size $\delta = 0.05/2^6$;}
\label{fig:NumEx.FourSquares.Error}
\end{figure}
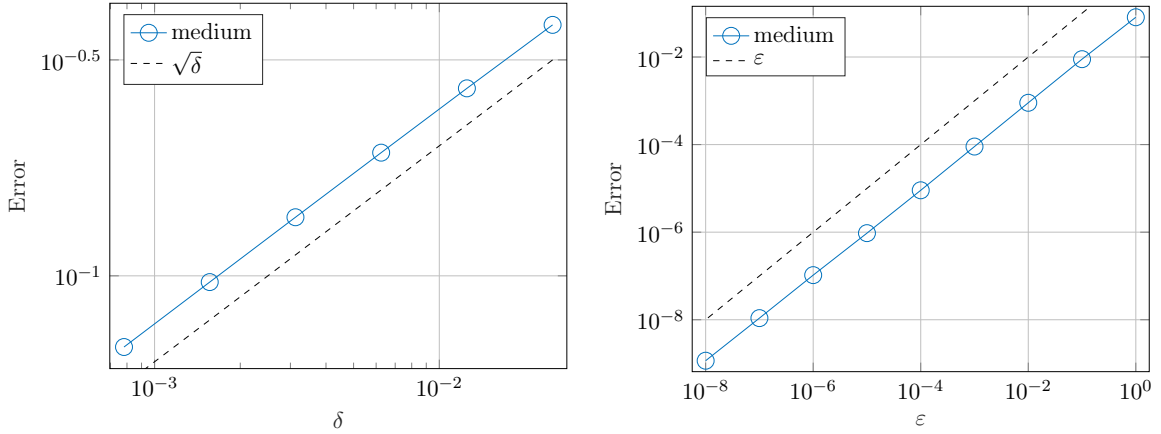

\subsection{Map of Switzerland}


\begin{figure}[t]
\centering
\subcaptionbox{Polygonal Switzerland%
		\label{subfig:NumEx.Switzerland.Polygonal}}%
	{\includegraphics[width=0.50\linewidth]
		{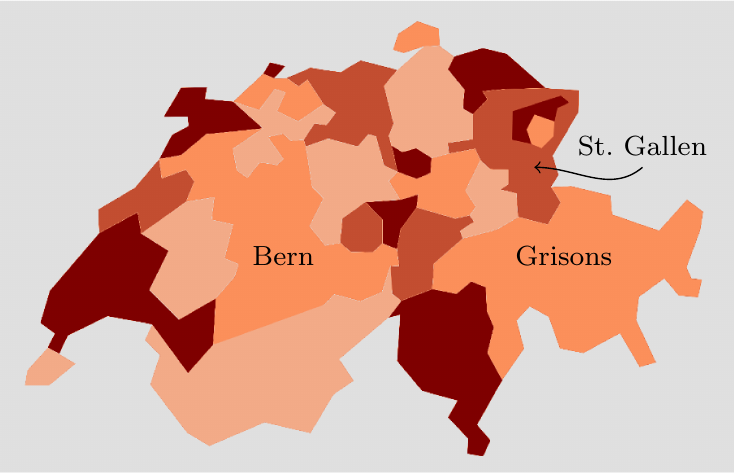}}
\hfill
\subcaptionbox{3D-view of $\varphi_{5}$}%
	{\includegraphics[width=0.4\linewidth]
		{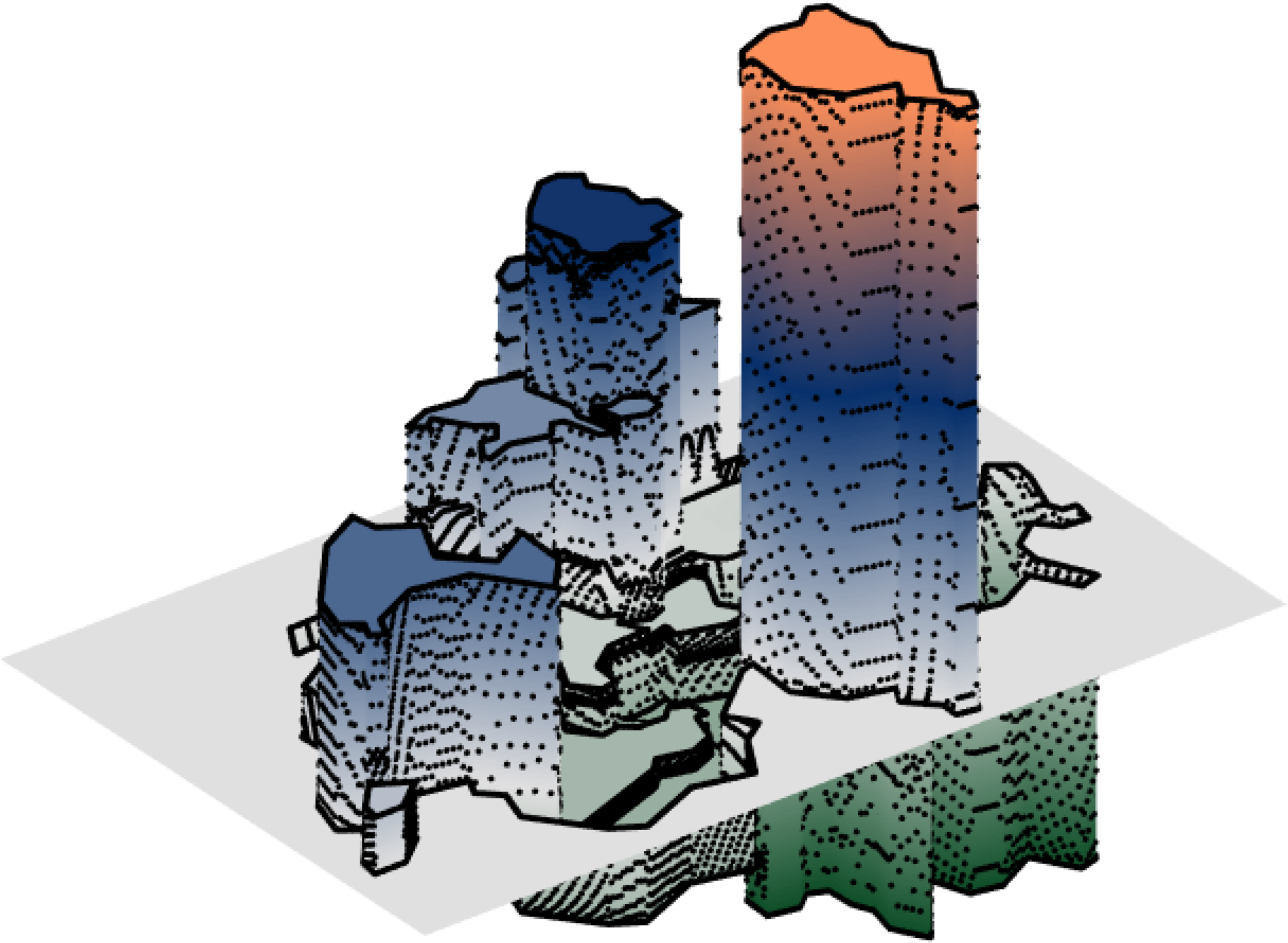}}
\\[0.7em]
\begin{subfigure}[c]{0.32\textwidth}
\centering
\includegraphics[width=0.9\linewidth]{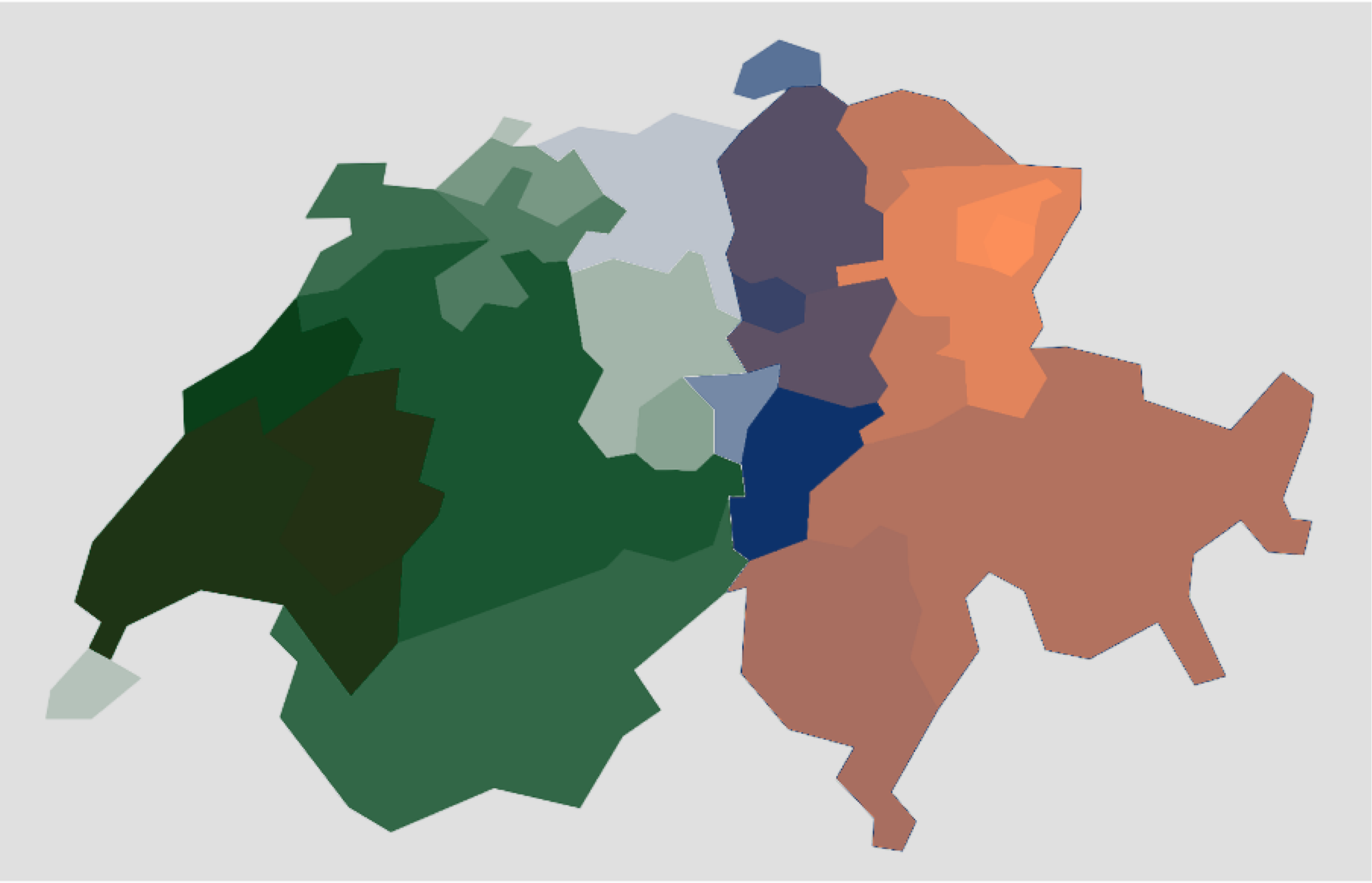}
\caption{$\varphi_{2}$ with $\lambda_{2} \approx 28.75$}
\end{subfigure}
\hfill
\begin{subfigure}[c]{0.32\textwidth}
\centering
\includegraphics[width=0.9\linewidth]{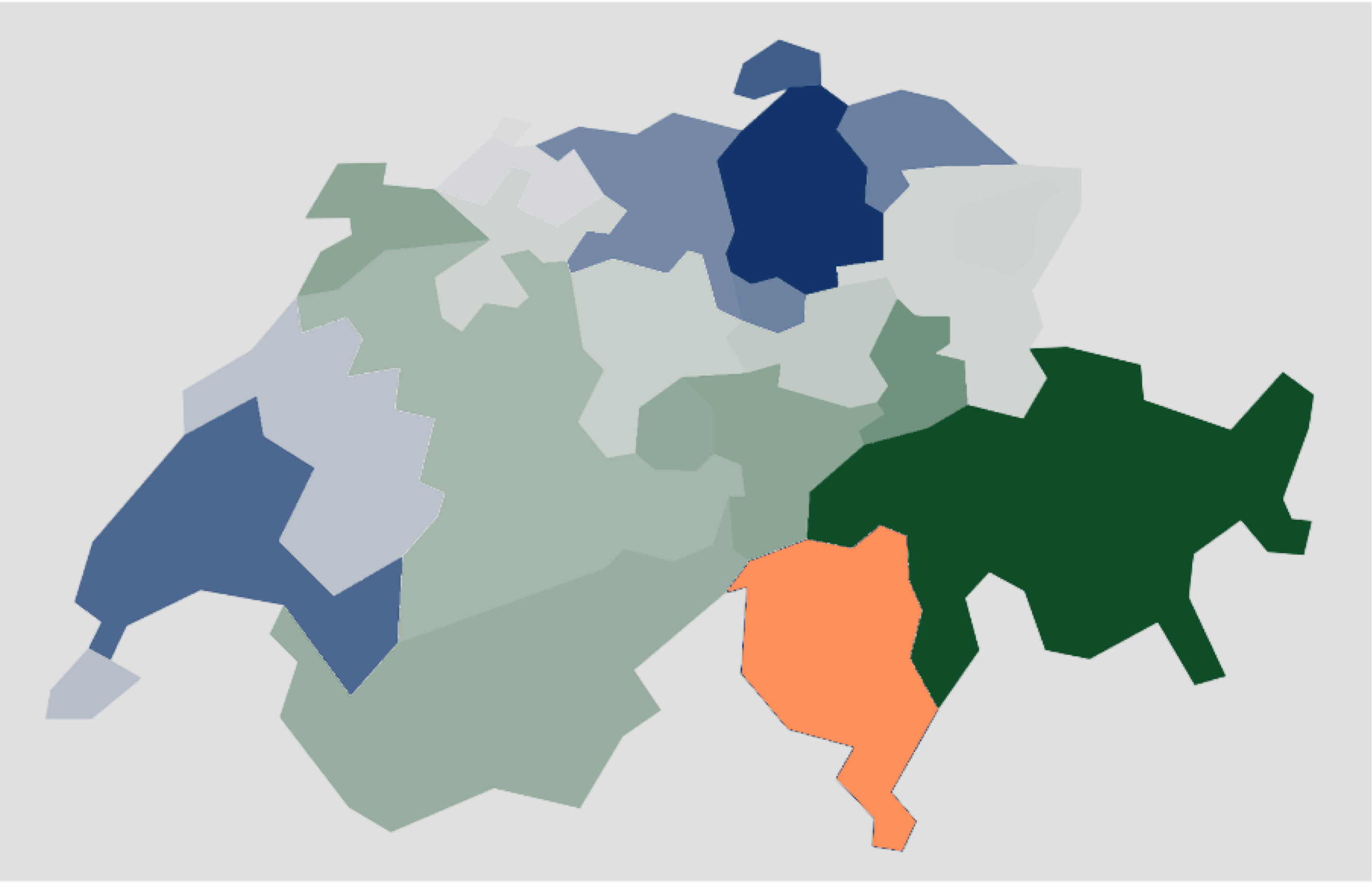}
\caption{$\varphi_{5}$ with $\lambda_{5} \approx 85.03$}
\end{subfigure}
\hfill
\begin{subfigure}[c]{0.32\textwidth}
\centering
\includegraphics[width=0.9\linewidth]{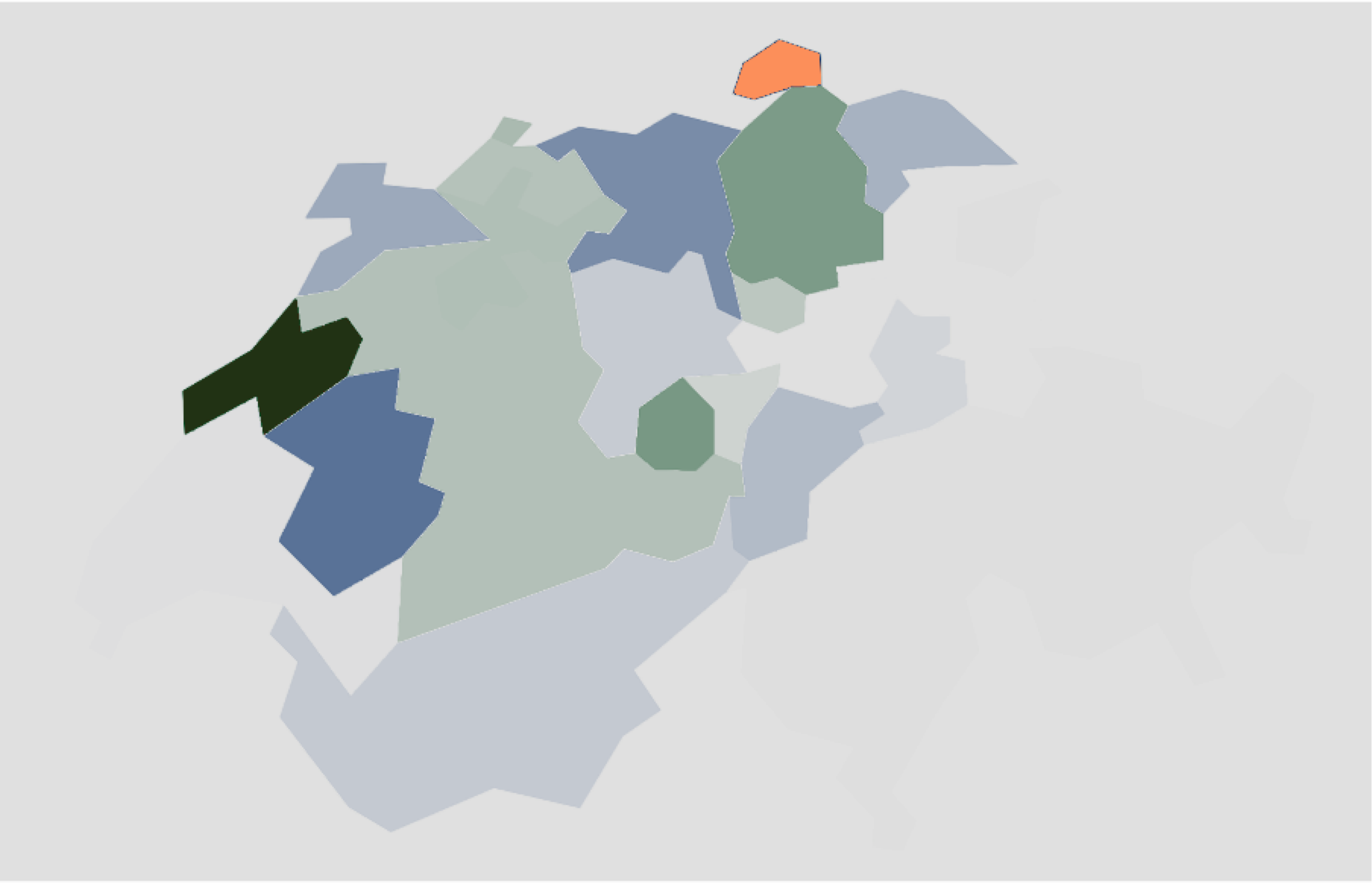}
\caption{$\varphi_{15}$ with $\lambda_{15} \approx 217.59$}
\end{subfigure}
\caption{
Polygonal approximation of the map of Switzerland $u_{\delta}$ and its $26$ cantons (top left), together with three eigenfunctions $\varphi_{k}$, $k=2,5,15$, of the operator $L_{\varepsilon}[u_{\delta}]$.}
\label{fig:NumEx.Switzerland}
\end{figure}

Here we consider the polygonal approximation of the map of Switzerland with its $K = 26$ cantons, shown in frame (a) of Figure \ref{fig:NumEx.Switzerland}, where each canton admits a constant value.
The data of the map are given on a discrete rectangular pixel based $1563\,\mathrm{px} \times 1002\,\mathrm{px}$ grid with grid-size $\delta = 1 \,\mathrm{px}$.
We interpolate the data to obtain $u_{\delta} \in \mathcal{V}^{\delta}_{0}$, and compute the first $K=26$ eigenfunctions, $\vphi_1,\ldots,\vphi_K$ of $L_\veps[u_\del]$;
frames (c), (d) and (e) of Figure \ref{fig:NumEx.Switzerland} show three of the eigenfunctions.

Although a single eigenfunction does not necessarily correspond to any particular canton, we may still represent each canton in $\Phi_{26}^{\veps,\del}=\Span\{\vphi_k\}_{k=1}^{26}$.
If $u^{\mathrm{c}}$ is the characteristic function for a canton shown in the map in Figure \ref{fig:NumEx.Switzerland}, and $u^{\mathrm{c}}_\del$ is its continuous (piecewise linear) interpolant in $\mathcal{V}^{\delta}$, we can use the AS basis $\{\varphi_{k}\}_{k=1}^{K}$ to approximate $u_{\delta}^{\mathrm{c}}$ as
\begin{align*}
	u_{\delta}^{\mathrm{c}}
		\approx \Pi_{K}^{\veps}[u_\delta]\, u_{\delta}^{\mathrm{c}}
		=\sum_{k=1}^{K} \beta_{k}\varphi_{k} ,
\end{align*}
with $K=26$.
In Figure \ref{fig:NumEx.Switzerland.DecompositionCantons} we show the approximations for the cantons of Bern, Grisons, and St.\ Gallen in $\Phi_{26}^{\veps,\del}=\Span\{\vphi_k\}_{k=1}^K$.
These reconstructions approximate very well the exact cantons in Figure~\ref{fig:NumEx.Switzerland}.

\begin{figure}[t]
\centering
\begin{subfigure}[c]{0.32\textwidth}
\centering
\includegraphics[width=0.9\linewidth]{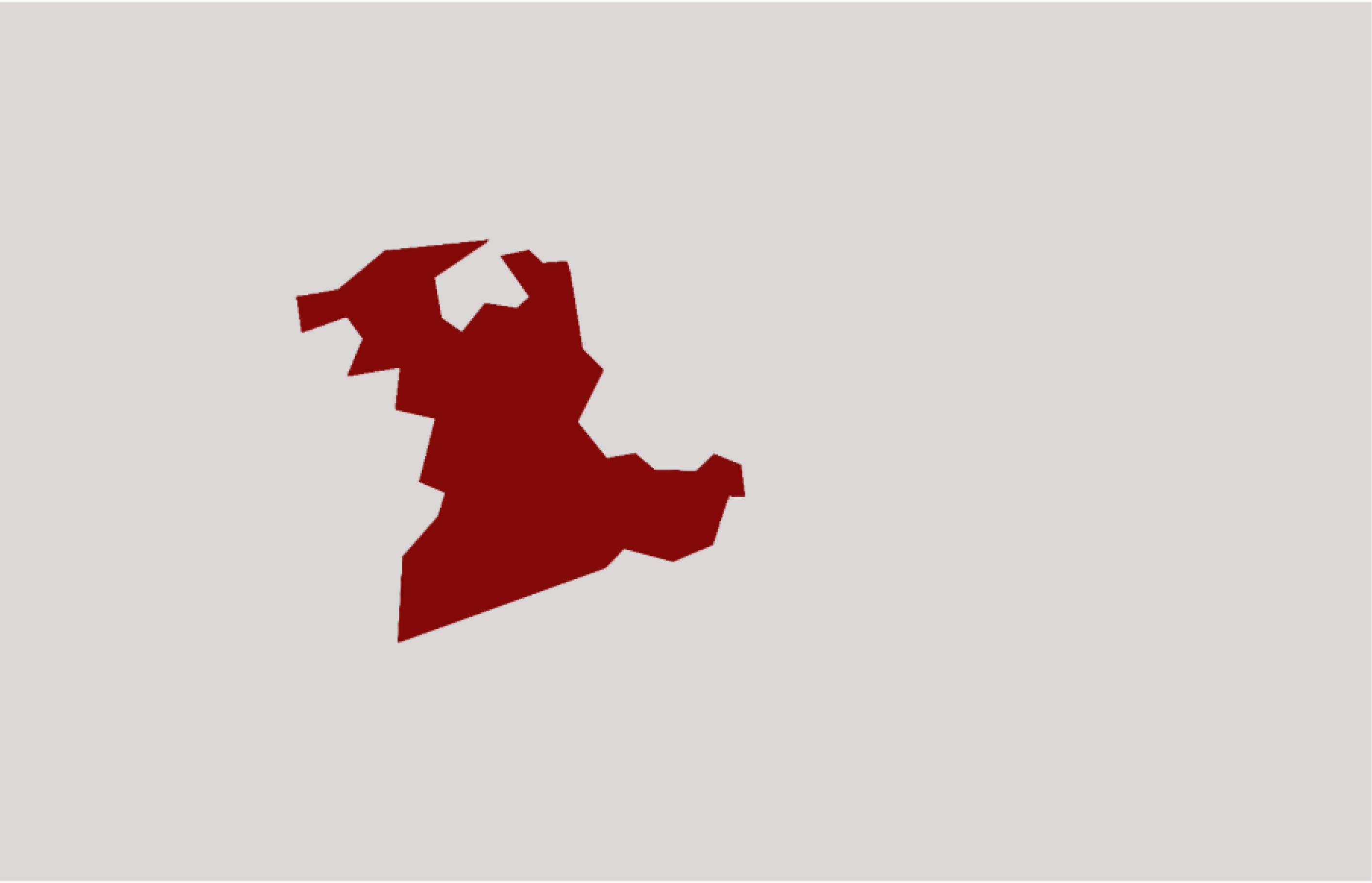}
\caption{Canton of Bern}
\end{subfigure}\hfill
\begin{subfigure}[c]{0.32\textwidth}
\centering
\includegraphics[width=0.9\linewidth]{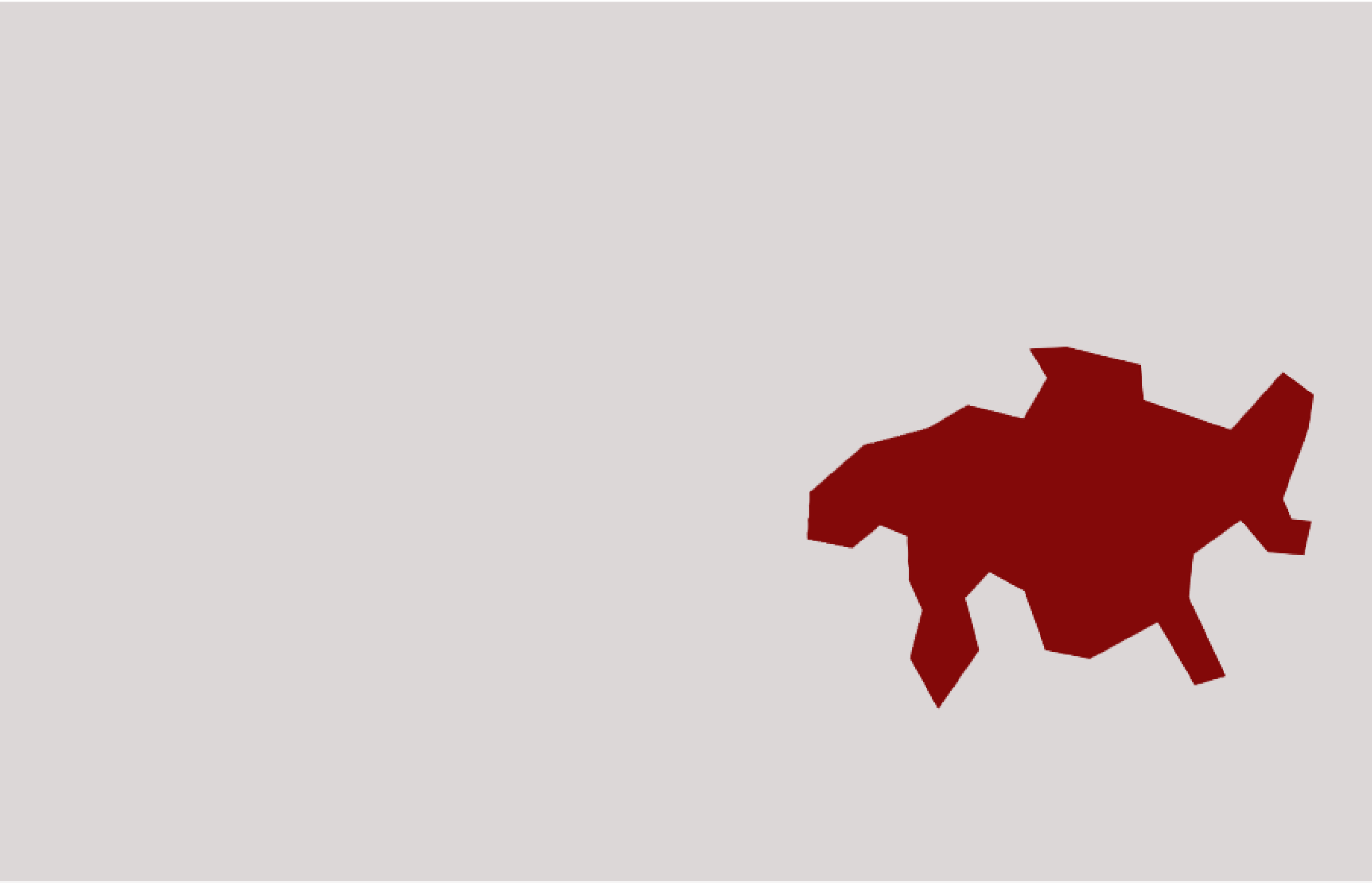}
\caption{Canton of Grisons}
\end{subfigure}\hfill
\begin{subfigure}[c]{0.32\textwidth}
\centering
\includegraphics[width=0.9\linewidth]{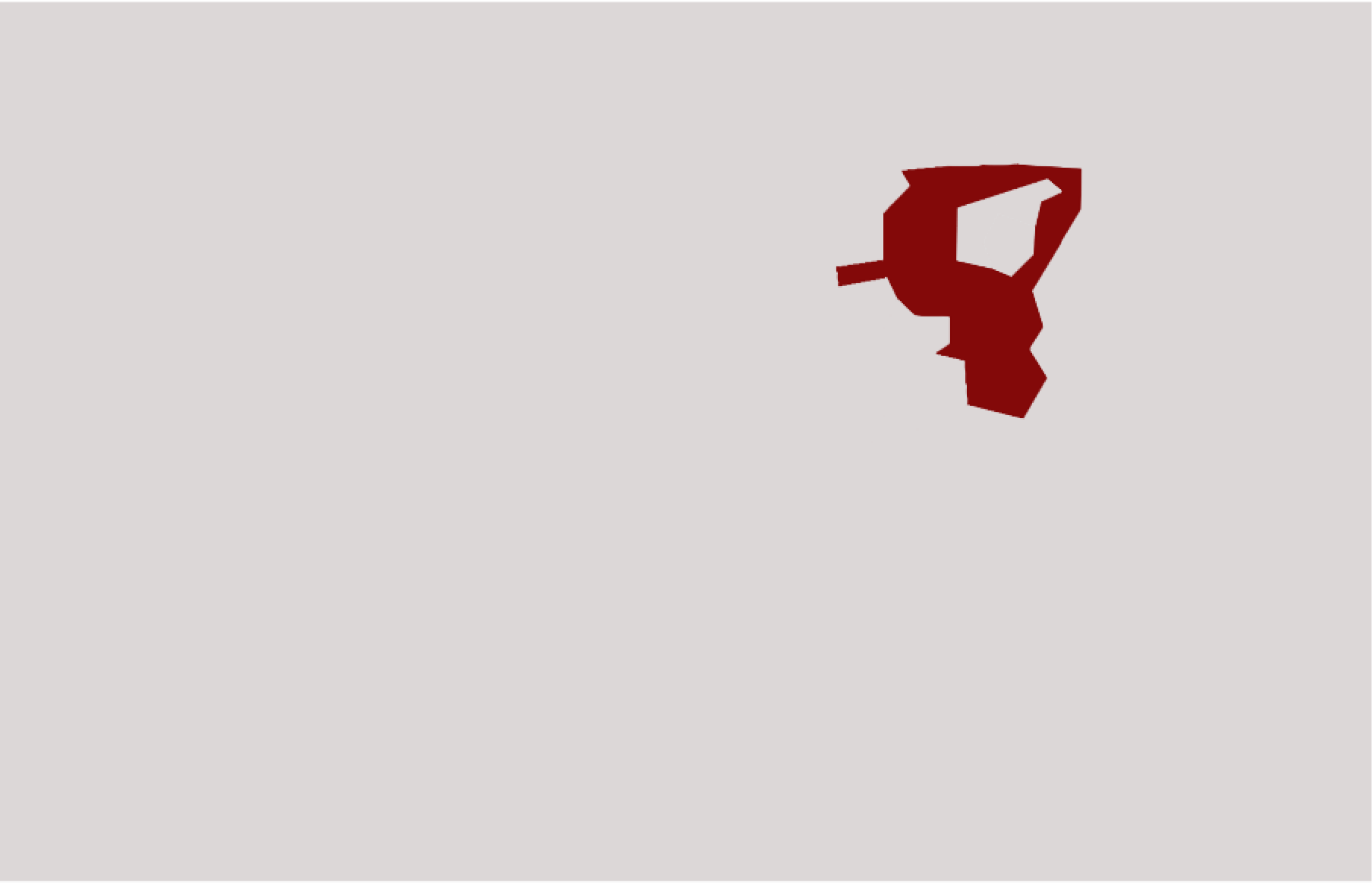}
\caption{Canton of St.\ Gallen}
\end{subfigure}
\caption{Map of Switzerland. Three cantons approximated in the truncated AS basis $\{\varphi_{k}\}_{k=1}^{K}$ with $K=26$.}
\label{fig:NumEx.Switzerland.DecompositionCantons}
\end{figure}

\begin{figure}[t]
\centering
\begin{subfigure}[c]{0.8\textwidth}
\centering
\includegraphics[width=\linewidth]{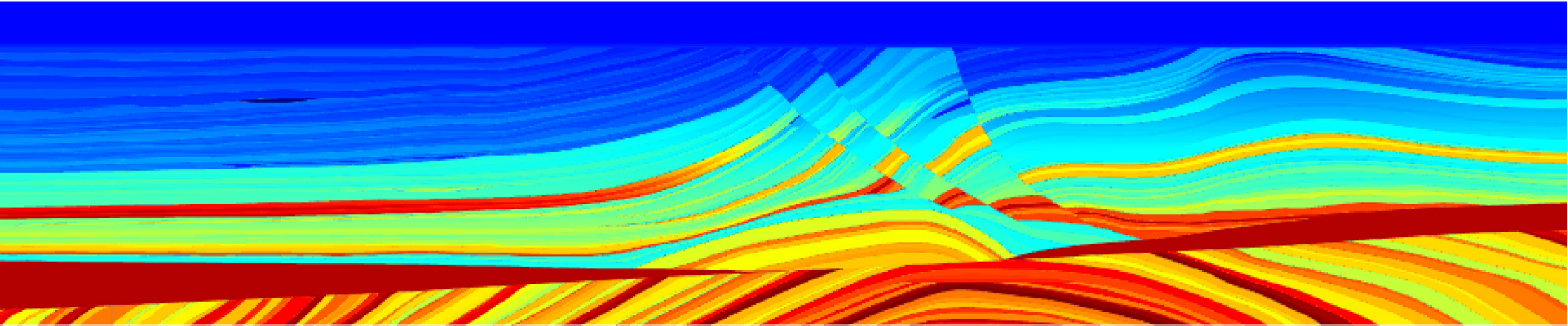}
\caption{The Marmousi model}
\end{subfigure}\\
\begin{subfigure}[c]{0.8\textwidth}
\centering
\includegraphics[width=\linewidth]{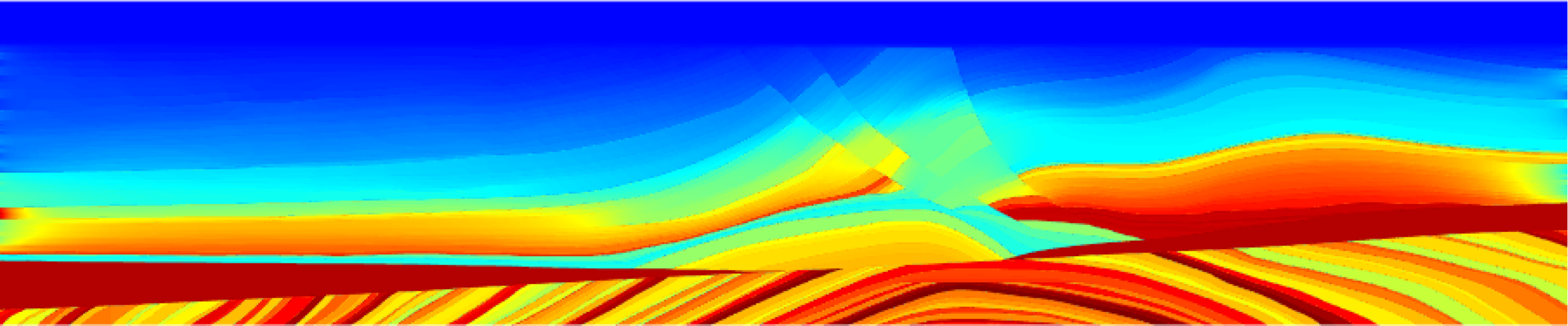}
\caption{$\varphi_{0}$ with a relative $L^{2}$ error of $12.8 \%$}
\end{subfigure}\\
\begin{subfigure}[c]{0.8\textwidth}
\centering
\includegraphics[width=\linewidth]{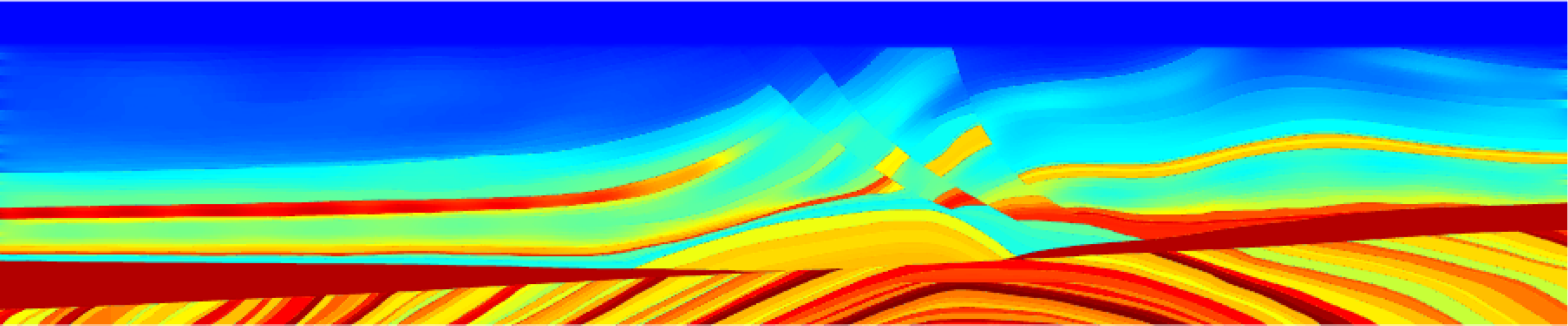}
\caption{$Q_{100}^{\veps}[u_\delta](u_{\delta})$ with a relative $L^{2}$ error of $3.8 \%$}
\end{subfigure}
\caption{The original Marmousi model with its background $\varphi_{0}$ and AS decomposition with $100$ eigenfunctions.}
\label{fig:NumEx.Marmousi2}
\end{figure}

\subsection{The Marmousi model}

As a last example we consider the subsurface model of the P-wave velocity of the AGL elastic Marmousi model shown in Figure \ref{fig:NumEx.Marmousi2}, see \cite{martin2006marmousi2,MarmousiWebsite}.
The data of the model is given as nodal values on a discrete rectangular mesh representing a $17\, \mathrm{km} \times 3.5\, \mathrm{km}$ area.
We interpolate the data in $\mathcal{V}^{\delta}$ with $\delta = 2.5 \,\mathrm{m}$ to obtain $u_{\delta}$.
Next, we compute the background $\varphi_{0} \in \mathcal{V}^{\delta}$ as well as the first $100$ eigenfunctions of the operator $L_{\varepsilon}[u_{\delta}]$.

Remarkably, the background $\varphi_{0}$ already yields a good approximation of the model with a relative error of 
\begin{align*}
	\frac{\|u_{\delta}-Q_{0}^{\varepsilon}[u_{\delta}](u_{\delta})\|_{L^2(\Om)}}{\|u_{\delta}\|_{L^2(\Om)}} = \frac{\|u_{\delta}-\varphi_{0}\|_{L^2(\Om)}}{\|u_{\delta}\|_{L^2(\Om)}} \approx 12.8 \%,
\end{align*}
probably because many of the internal layers in the model reach the boundary and thus can be recovered by $\varphi_{0}$.
In contrast, the eigenfunctions $\varphi_{k}$ ($k\ge1$) account for variations of the medium in the interior of the domain.
Here, the additional contribution of the first $K=100$ eigenfunctions to the approximation further reduces the relative error to
$\|u_{\delta}-Q_{K}^{\varepsilon}[u_{\delta}](u_{\delta})\|_{L^2(\Om)}/\|u_{\delta}\|_{L^2(\Om)} \approx 3.8\%$.

\subsection{Inverse Problem}

Here we devise an iterative inversion algorithm based on AS decompositions to solve a standard linear deconvolution inverse problem which occurs in optical imaging \cite{bertero2021inverse}.
Hence, we consider the Fredholm integral equation of the first kind
\begin{equation}\label{eq:inverse.problem}
	Fu =y
\end{equation}
where $F: L^2(\Omega) \to L^2(\Omega)$ is the convolution operator
\begin{equation}
	Fu(x) = \int_\Omega g(x - x^\prime) u(x^\prime) \; dx^\prime ,
\end{equation}
with $\Omega = (0,1)^2$ and $g$ the Gaussian kernel
\begin{equation}
	g(x) = \frac{1}{2 \pi \gamma^2} \, e^{-\frac{|x|^2}{2 \gamma^2}}, \qquad \gamma = \frac{1}{32}.
\end{equation}
Given the noisy observation $y^\eta$ of $y^\dagger=Fu^\dagger$ where $\|y^\dagger - y^\eta\|_{L^2(\Omega)} \leq \eta$, we wish to reconstruct the true medium/image $u^\dagger$.
In doing so, we assume the FE interpolant $u^\dagger_h$ of $u^\dagger$ is known on the boundary $\p\Om$.

First, we formulate the problem as the minimization of
\begin{equation}
	\cJ(u) = \frac{1}{2}\| Fu - y^\eta \|_{L^2(\Omega)}^2
\end{equation}
in some appropriate space.
Then, we proceed iteratively as follows:
In the $m$-th iteration, given the previous estimate $u^{(m-1)}$ of $u^\dagger$, we compute $\varphi_k^{(m)}$ $(k = 0, \ldots, K)$ by solving 
\begin{align}\label{eq:asb.iteratively}
	\begin{aligned}
			L_{\varepsilon}[u^{(m-1)}] \varphi_0^{(m)} &= 0 \; &&\text{in}\;\Omega, \qquad \varphi_0^{(m)} = u^\dagger_h \; &&\text{on}\;\partial\Omega, \\
			L_{\varepsilon}[u^{(m-1)}] \varphi_k^{(m)} &= \lambda_k \varphi_k^{(m)} \; &&\text{in}\; \Omega, \qquad \varphi_k^{(m)} = 0 \; &&\text{on}\;\partial\Omega .
	\end{aligned}
\end{align}
Next, we compute the current estimate, $u^{(m)}$, by solving the least squares (LS) problem:
\begin{equation}\label{eq:iter_u^(m)}
	u^{(m)} = \arg\min\Bigcurb{\cJ(u):\ u \in \varphi_0^{(m)} + \Phi^{(m)}_K} ,
	\qquad
	\Phi_K^{(m)} = \Span\Bigcurb{\vphi_k^{(m)}}_{k=1}^K.
\end{equation}
Since the dimension $K$ of this LS problem is small, we may solve it directly.
The iteration stops when the discrepancy principle,
\begin{align}\label{eq:discrepancy.principle}
	\| F u^{(m)} - y^\eta \|_{L^2(\Omega)} \leq \tau \eta ,
\end{align}
is satisfied for some fixed $\tau \geq 1$; then $u^\mathrm{ASI}$ denotes the estimate $u^{(m)}$ at the final iteration.

In practice, we solve the problem numerically with the FE method.
As in the previous numerical examples, we use standard $\mathcal{P}^1$-FE on a uniform triangular mesh with mesh size $h = \delta = 0.00625$ 
to discretize the deconvolution problem \eqref{eq:inverse.problem} and the AS problems \eqref{eq:asb.iteratively}, \eqref{eq:iter_u^(m)}.
The discretization of \eqref{eq:inverse.problem} yields a linear system of equations
\begin{equation}\label{eq:inverse.problem.discretized}
	F_h \vec u_h = \vec y_h ,
\end{equation}
which is ill-conditioned as the smallest singular value of $F_h$ is $\sigma_\mathrm{min} \approx 10^{-17}$.
For the test below we let the exact medium/image $u^\dagger$ be given by Figure \ref{subfig:NumEx.ComplexShape.Exact}, the noise $\eta \approx 4\%$ and set $u^{(0)} = y^\eta$ and $K = 100$.
Thus the dimension $K = 100$ of the LS problem in \eqref{eq:iter_u^(m)} is indeed small compared to the dimension $N\approx26'000$ of the FE space and we can solve it directly.

For comparison, we also solve the deconvolution problem with two other standard approaches.
In the first, we solve \eqref{eq:inverse.problem.discretized} directly using the LU-decomposition to obtain the solution $u^\mathrm{LU}$.
In the second, we apply the truncated singular value decomposition (TSVD), i.e., we regularize \eqref{eq:inverse.problem.discretized} by replacing all singular values of $F_h$ smaller than $\sqrt{\eta}$ by zeros; see \cite[Chapter 8]{bertero2021inverse} or \cite[Chapter 1]{vogel2002inverse} for more details; that solution is denoted by $u^\mathrm{TSVD}$.

Table \ref{tab:data.ip} provides the relative $L^2$ error 
\begin{align}\label{eq:parameters.ip}
	e_\mathrm{r} = \frac{\|u - u^\dagger_h\|_{L^2(\Omega)}}{\|u^\dagger_h\|_{L^2(\Omega)}}
	\qquad
	\text{and the ratio}
	\qquad \tau = \frac{1}{\eta} \left\| F u - y^\eta \right\|_{L^2(\Omega)}
\end{align}
for the discrepancy principle \eqref{eq:discrepancy.principle}, for the three reconstructions $u^\mathrm{ASI}$, $u^\mathrm{TSVD}$, and $u^\mathrm{LU}$ shown in Figure \ref{subfig:IP.solutions}.
As expected, using the LU-decomposition for solving the inverse problem produces the solution with the largest relative $L^2$ error, despite a rather small misfit.
The TSVD solution $u^\mathrm{TSVD}$ yields an acceptable reconstruction with a relative error less than $20 \;\%$ and $\tau \approx 1$.
Still, as shown in Figure \ref{subfig:IP.tsvd}, the discontinuities are not well represented.
In contrast, the ASI solution in Figure \ref{subfig:IP.asi} has the smallest relative $L^2$ error while discontinuities in the medium are better detected.
Clearly, there are many available image reconstruction techniques more sophisticated than TSVD \cite{hansen2010discrete,vogel2002inverse,bertero2021inverse}, which is only used here for the purpose of illustration.
\begin{table}[h]
	\begin{center}
		\begin{tabular}{c||c|c|c|}
			~& ASI & TSVD & LU \\\hline
			$e_\mathrm{r}$ & $15.1  \; \%$ & $18.9  \; \%$ & $3.3 \cdot 10^{14}  \; \%$ \\
			$\tau$ & $1.06$ & $1.07$ & $0.001$
		\end{tabular}
	\end{center}
	\caption{Inverse problem. The relative error $e_\mathrm{r}$ and the relative misfit $\tau$ given by \eqref{eq:parameters.ip} for the three reconstructions $u^\mathrm{ASI}$, $u^\mathrm{TSVD}$ and $u^\mathrm{LU}$ for the inverse problem \eqref{eq:inverse.problem.discretized} with $4\;\%$ added noise.}
	\label{tab:data.ip}
\end{table}
\begin{figure}[t]
	\centering
	\begin{subfigure}[c]{0.32\textwidth}
		\centering
		\includegraphics[height=0.84\linewidth]{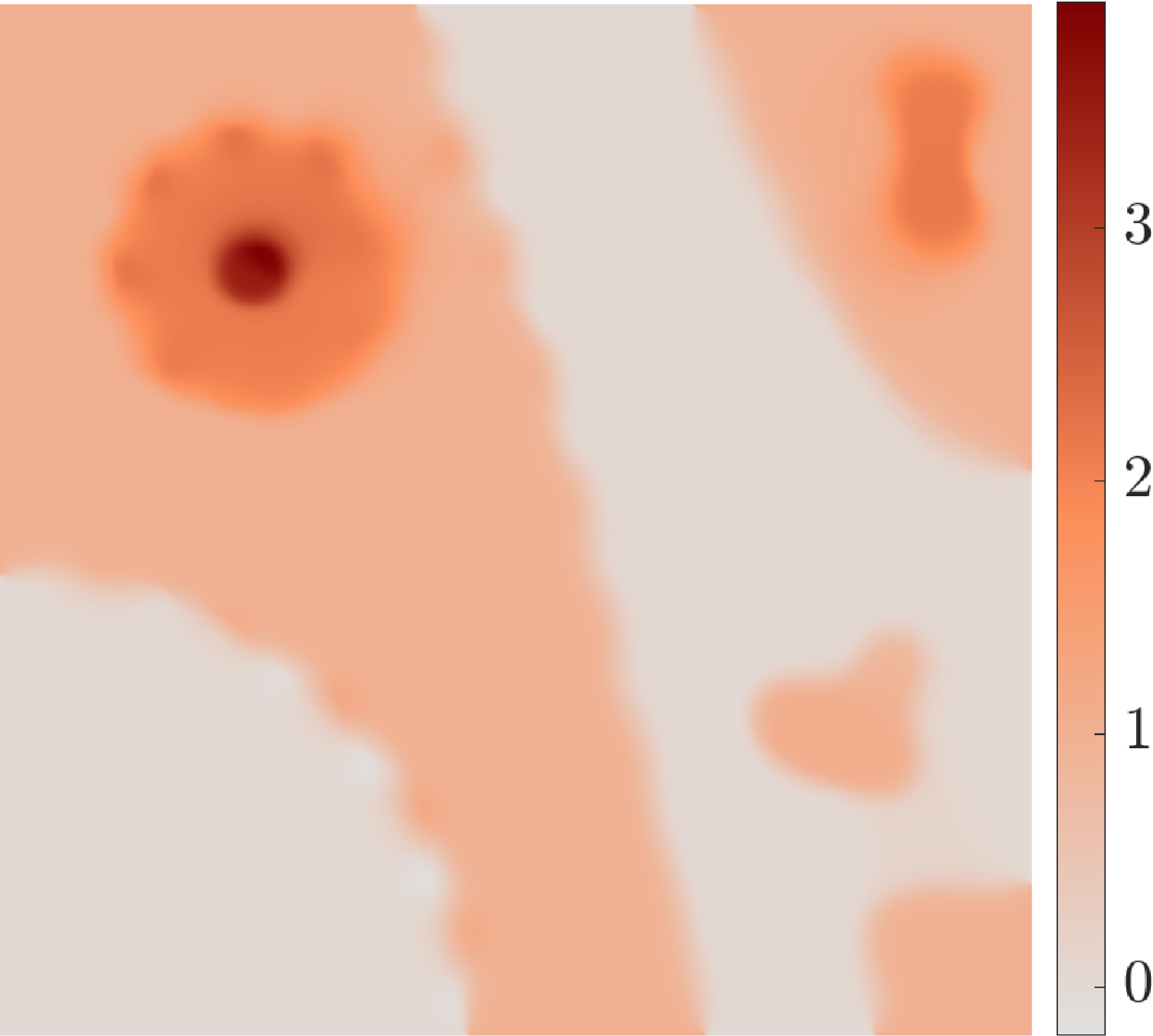}
		\caption{$u^\mathrm{ASI}$}
		\label{subfig:IP.asi}
	\end{subfigure}
	\hfill
	\begin{subfigure}[c]{0.32\textwidth}
		\centering
		\includegraphics[height=0.84\linewidth]{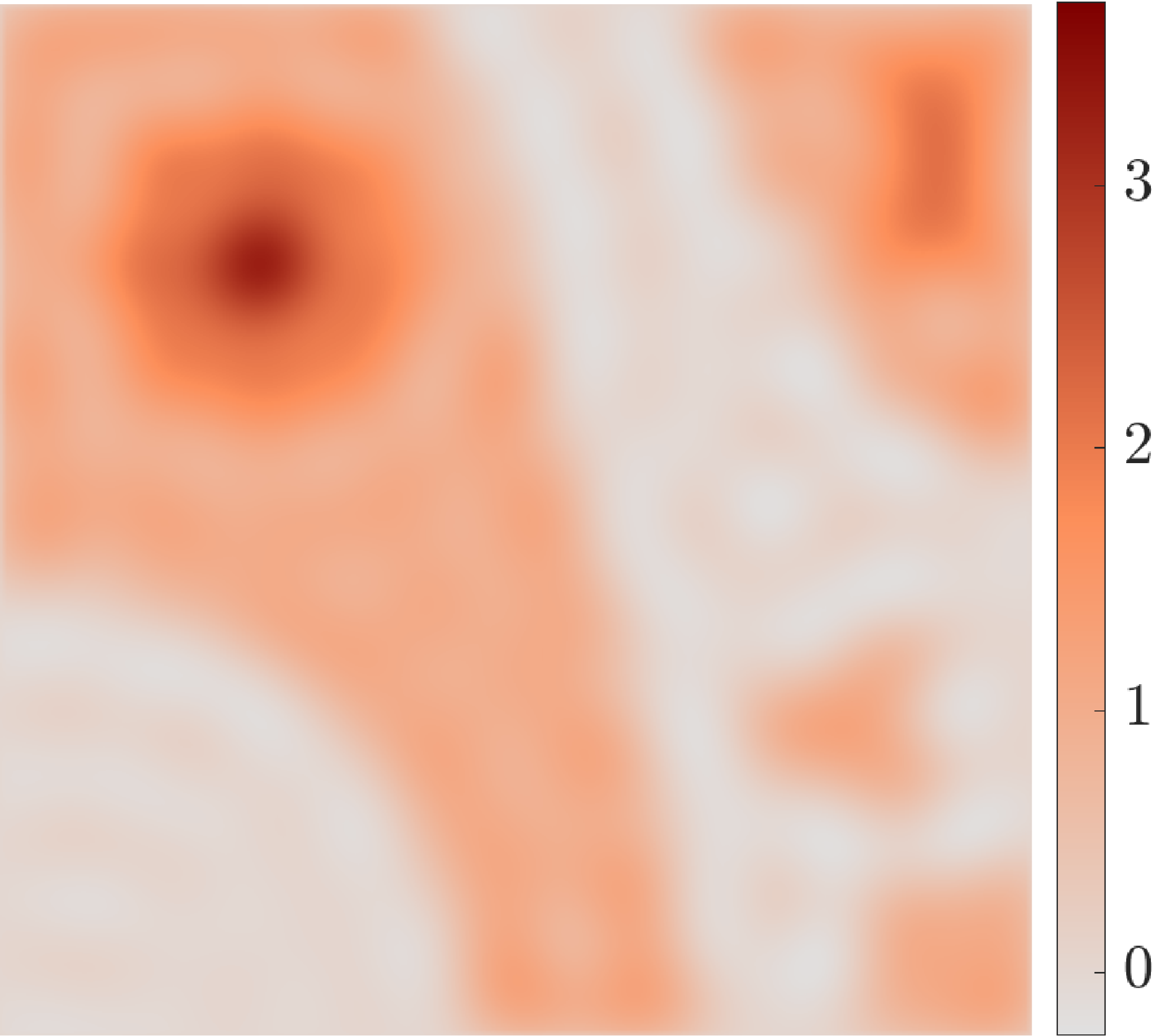}
		\caption{$u^\mathrm{TSVD}$}
		\label{subfig:IP.tsvd}
	\end{subfigure}
	\hfill
	\begin{subfigure}[c]{0.32\textwidth}
		\centering
		\includegraphics[height=0.84\linewidth]{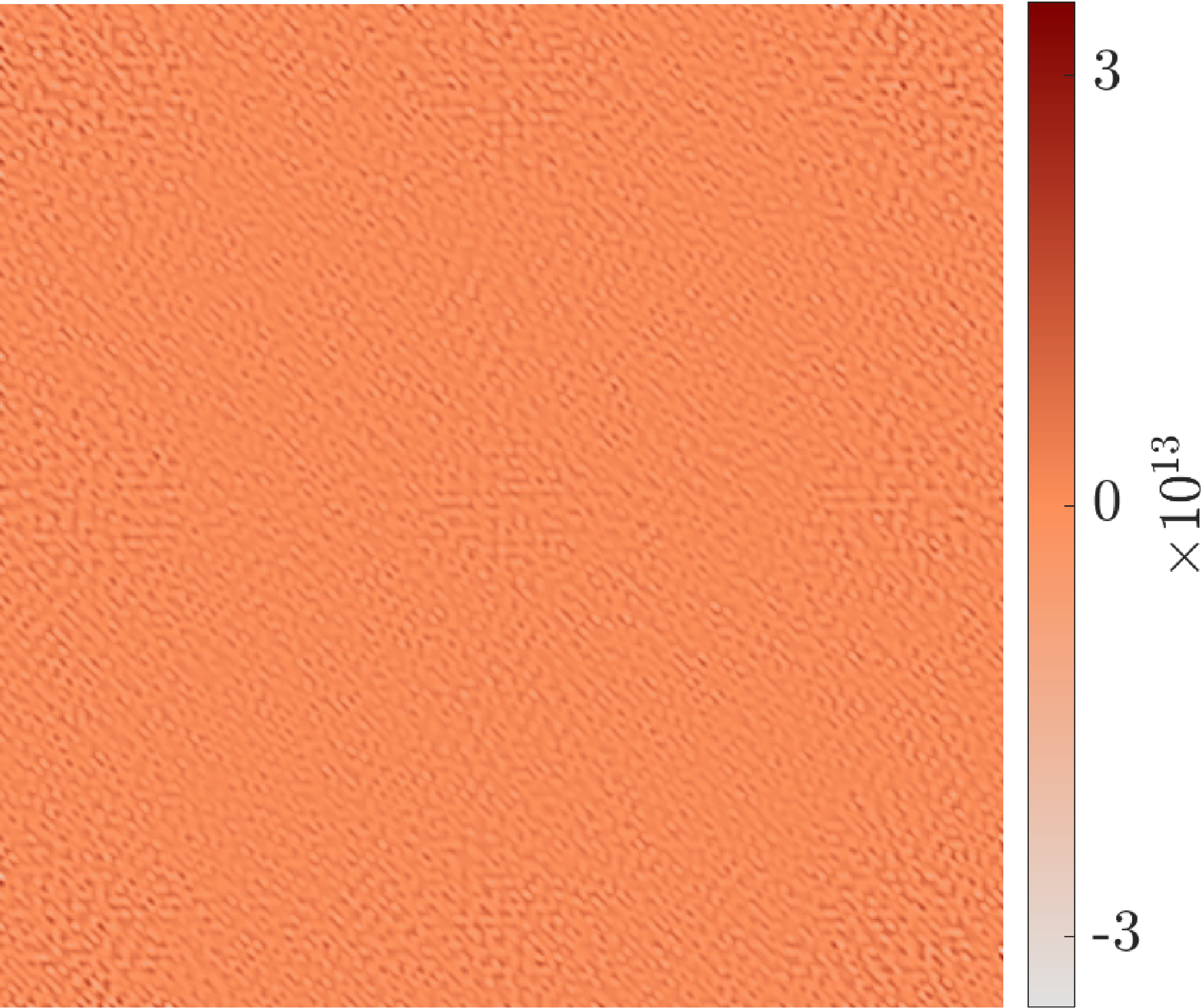}
		\caption{$u^\mathrm{LU}$}
		\label{subfig:IP.lu}
	\end{subfigure}
\caption{Inverse problem. Solutions obtained by the three different methods to solve the inverse problem \eqref{eq:inverse.problem.discretized} with $4\;\%$ added noise.}
\label{subfig:IP.solutions}
\end{figure}

%
%

\section*{Declarations}

\paragraph{Acknowledgments.}

We thank Giovanni Alberti and Gianluca Crippa for their useful comments and suggestions regarding Appendix \ref{sec:level_sets}.

\paragraph{Funding.}
The authors declare that no funds, grants, or other support were received during the preparation of this manuscript.

\paragraph{Data Availability}
The numerical results used to confirm the theory and illlustrate its usefulness were generated with 
Matlab codes not intended for public distribution. However, the corresponding author would certainly
make them available upon reasonable request.

\appendix

\section{Level sets of distance functions}\label{sec:level_sets}

In the following, for $p_1, p_2\in\R^d$, $\dist(p_1,p_2)$ denotes the Euclidean distance
\[
	\dist(p_1,p_2)=|p_1-p_2|
\]
between $p_1$ and $p_2$.
Here we prove the following theorem:
\begin{theorem}\label{thm:dist_set}
If $A\sbt\R^d$ is a $\Lam$-Lipschitz domain with bounded boundary, and $\del>0$ sufficiently small, then $A_\del$ given by
\begin{equation}
	A_\del =\curb{x\in A :\ \dist(x,\p A) > \del }
\end{equation}
is also $\Lam$-Lipschitz.
\end{theorem}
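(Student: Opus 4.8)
The plan is to verify the Lipschitz-domain property directly from its local definition: near each boundary point, $A_\del$ should coincide (after a rigid motion) with the epigraph of a $\Lam$-Lipschitz function. Since $A$ is a $\Lam$-Lipschitz domain with bounded boundary, I would first fix a finite atlas: finitely many open sets $U_1,\dots,U_N$ covering $\p A$ together with orthogonal coordinate systems in which $A\cap U_i=\{(y',y_d):y_d>\gamma_i(y')\}$ for a $\Lam$-Lipschitz function $\gamma_i$. The key geometric claim is that for $\del$ sufficiently small, every point of $\p A_\del$ lies in some $U_i$ and, in the coordinates of that chart, $\p A_\del$ is again a graph $y_d=\gamma_i^\del(y')$ of a $\Lam$-Lipschitz function, with $A_\del$ lying above it. Granting this, patching the charts together shows $A_\del$ is $\Lam$-Lipschitz, with the same constant (this monotonicity-of-Lipschitz-constant-under-refinement remark is exactly what is invoked after the theorem in the main text).

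The heart of the argument is the pointwise/local analysis of the distance function $f(x)=\dist(x,\p A)$ inside a single chart. First I would record the elementary fact that $\dist(\cdot,\p A)$ is globally $1$-Lipschitz, and that in a $\Lam$-Lipschitz epigraph the quantity $\dist(x,\p A)$ is comparable to the vertical distance $y_d-\gamma(y')$: precisely, $\tfrac{1}{\sqrt{1+\Lam^2}}(y_d-\gamma(y'))\le \dist(x,\p A)\le y_d-\gamma(y')$ for points sufficiently close to the graph, because any competitor foot point on $\p A$ can be compared with the vertical projection. This comparability guarantees that for small $\del$ the level set $\{f=\del\}$ stays inside the union of charts and stays a bounded vertical distance (of order $\del$) above $\p A$. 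Then, within a chart, I would show that for each fixed $y'$ the function $t\mapsto f(y',\gamma(y')+t)$ is strictly increasing for small $t>0$ — intuitively, moving vertically upward strictly increases the distance to the boundary — so the equation $f(y',y_d)=\del$ has a unique solution $y_d=\gamma^\del(y')$; the implicit set $\{f>\del\}\cap U_i$ is then exactly $\{y_d>\gamma^\del(y')\}$. Monotonicity in $t$ should follow from a one-variable calculus estimate using that $f$ is $1$-Lipschitz together with the lower bound $f\ge c_\Lam(y_d-\gamma(y'))$ above; alternatively one can argue via foot points: the nearest boundary point to $(y',\gamma(y')+t)$ cannot be "above" the moving point, so raising the point moves it strictly away.

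The Lipschitz bound $|\gamma^\del(y_1')-\gamma^\del(y_2')|\le\Lam|y_1'-y_2'|$ is then obtained from the interplay of the two one-Lipschitz properties: $f$ is $1$-Lipschitz in all $d$ variables, and along the level set $f\equiv\del$, so the "horizontal" variation of $\gamma^\del$ is controlled by how steeply $f$ climbs vertically, which in turn is controlled by the lower bound $\partial_t f\ge c_\Lam>0$ (in an appropriate one-sided/Clarke sense since $f$ is merely Lipschitz). Writing this out carefully with difference quotients rather than derivatives, one gets $|\gamma^\del(y_1')-\gamma^\del(y_2')|\le (1/c_\Lam)|y_1'-y_2'|$, and the point is that $1/c_\Lam=\sqrt{1+\Lam^2}$ is not quite $\Lam$ — so the final step is to observe that by shrinking the scale of the charts one may rechoose the coordinate systems so that $\p A$ is a graph over a slightly tilted hyperplane, trading the factor $\sqrt{1+\Lam^2}$ back down to $\Lam$; this is precisely the "when reducing the scale, the optimal constant cannot increase" remark, and it is the reason the theorem can assert the \emph{same} constant $\Lam$.

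I expect the main obstacle to be the nonsmoothness of the distance function: $f=\dist(\cdot,\p A)$ is only Lipschitz, not $C^1$, so neither the implicit function theorem nor a naive gradient computation applies, and the monotonicity "$\partial_t f>0$" must be established by a genuinely geometric (foot-point) argument or via Clarke subdifferentials, uniformly in $y'$ and in the chart. A secondary technical point is making the "$\del$ sufficiently small" quantitative and uniform across the finite atlas, so that the level set $\{f=\del\}$ never leaves the region where the graph comparison is valid and never interacts with a different sheet of $\p A$ (here boundedness of $\p A$ and finiteness of the atlas are what save us).
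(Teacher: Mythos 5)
Your overall architecture (finite atlas of Lipschitz charts, reduce to showing that the $\del$-level set of the distance function inside one chart is again a Lipschitz graph with $A_\del$ above it, then patch) matches the paper's, and your monotonicity claim for $t\mapsto \dist\bigl((y',\gamma(y')+t),\p A\bigr)$ can indeed be made rigorous by a foot-point argument (the upward vertical translation maps the epigraph into itself, so the distance is nondecreasing, and the two-sided comparability with the vertical gap gives a quantitative growth rate). The genuine gap is in the Lipschitz constant. Your estimate chain --- $\dist(\cdot,\p A)$ is $1$-Lipschitz horizontally, and grows vertically at rate at least $c_\Lam=1/\sqrt{1+\Lam^2}$ --- can only ever certify that the level-set graph $\gamma^\del$ is $\sqrt{1+\Lam^2}$-Lipschitz, and the proposed repair does not work: the remark in Section 2 about reducing the scale only says that restricting a chart to a smaller neighborhood cannot \emph{increase} the optimal constant; it does not allow you to trade a certified constant $\sqrt{1+\Lam^2}$ back down to $\Lam$ by tilting the reference hyperplane. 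A concrete obstruction: for $\gamma(s)=\Lam|s|$ in the plane, the level set $\{\dist(\cdot,\p A)=\del\}$ consists of two lines of slope $\pm\Lam$ joined by a circular arc of radius $\del$, so it is exactly $\Lam$-Lipschitz near the vertex and no tilted coordinate system there does better than $\Lam$; the loss of the factor $\sqrt{1+\Lam^2}$ must therefore be eliminated in the estimate itself, not by a change of chart. (If all you needed were \emph{some} $\del$-independent constant, your weaker bound would do, but the theorem asserts the same $\Lam$, and that is what the rest of the paper quotes.)

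The paper obtains the sharp constant by a purely geometric cone-exclusion argument instead of differentiating the distance function: with $\cC_p$ the open double cone of slope $\Lam$ at a point $p$ of the level set $G(\del)$, it shows $\cC_p\cap G(\del)=\emptyset$ (Lemma \ref{lem:G_cone}) --- the lower half-cone because it is the translate, by the length-$\del$ vector from a foot point $\hat p\in\what F$ to $p$, of the cone $\cC_{\hat p}^-$ lying below the graph, so all its points are within distance less than $\del$ of $\what F$; the upper half-cone because Proposition \ref{prop:d_sphere} places the whole graph $\what F$ in the region $\wt\cC_p$ at distance at least $\del$ from $\cC_p^+$, so points of $\cC_p^+$ are at distance greater than $\del$ from $\what F$. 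Since cone exclusion with slope $\Lam$ is equivalent to $\Lam$-Lipschitz regularity of the graph, this yields Theorem \ref{thm:dist_graph} with the same constant, and the finite-atlas patching then coincides with yours. If you want to salvage your route, you would have to replace the crude pairing of ``$1$-Lipschitz horizontally'' with ``growth $\ge c_\Lam$ vertically'' by a directional statement along the cone directions (in a Clarke or difference-quotient sense), which is essentially the cone argument in analytic disguise.
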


We say that a domain $A\sbt\R^d$ with bounded boundary $\p A$ is $\Lam$-Lipschitz, if near its boundary it locally coincides with the epigraph of a $\Lam$-Lipschitz function \cite{10.2307/2161378}.
As a preliminary result we first show in Theorem \ref{thm:dist_graph} of Section \ref{sec:Lip_graph} a similar result for the epigraph of a Lipschitz function.

\subsection{Distance functions for Lipschitz graphs}\label{sec:Lip_graph}

Let $\hat\cB\sbt\R^{d-1}$ be a ball of radius $R$, $f:\hat\cB\to\R$ $\Lam$-Lipschitz, $\what F$ the graph of $f$ in $\hat\cB$, $\cB\sbt\hat\cB$ a ball of radius $r<R$ concentric with $\hat \cB$, and
\begin{equation}\label{eq:G_del_set}
	G(\del) =\curb{ p=(x, y) :\ x\in\cB,\ y>f(x), \ \dist(p,\what F) =\del } .
\end{equation}
The setup is illustrated in the left frame of Figure \ref{fig:f_B_prop:d_sphere}.
Here we show that $G(\del)$ is the graph of a $\Lam$-Lipschitz function $g:\cB\to\R$.
\begin{figure}[t]
\centering
\includegraphics[height=4cm]{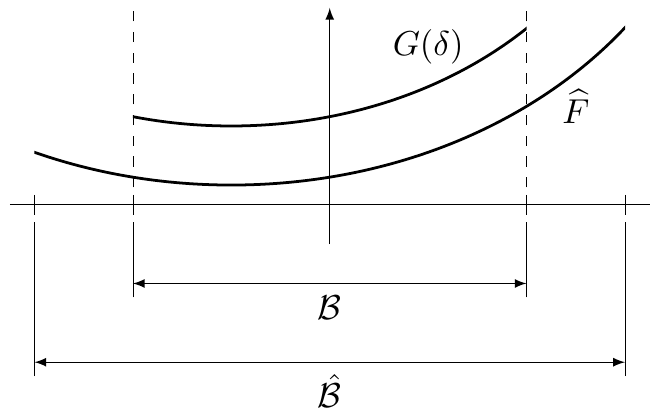}
\hfill
\includegraphics[height=5cm]{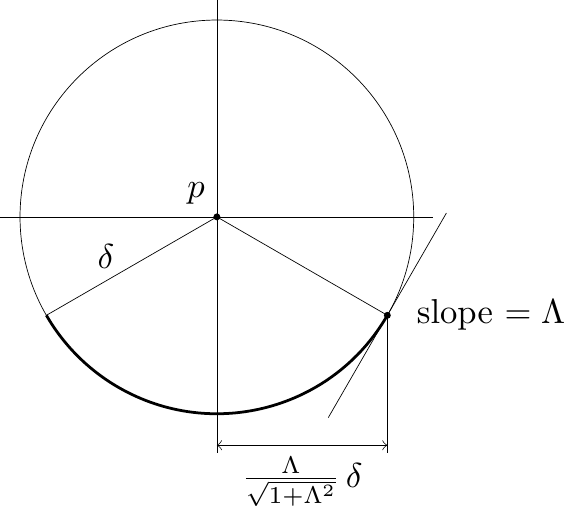}
\caption{Left: the graph $\what F$ of $f$ in $\hat\cB$ and $G(\del)$;
right: illustration of the setup of Proposition \ref{prop:d_sphere} in the plane.}
\label{fig:f_B_prop:d_sphere}
\end{figure}

For $p\in\R^d$, we let $\cC_p$ denote the open (two-sided) infinite cone,
\[
	\cC_p = p+\cC_0 ,
	\qquad
	\cC_0 = \bigcurb{(x,y)\in\R^{d-1}\times\R :\ |y| > \Lam |x|} .
\]
We shall use that a function $g:\cB\to\R$ is $\Lam$-Lipschitz if and only if for every point $p$ in its graph, $\operatorname{graph}(g)$, we have $\cC_p\cap\operatorname{graph}(g)=\emptyset$.

First we show that for every $x\in\cB$ and $y>f(x)$ sufficiently large, the distance of $(x,y)$ to $\what F$ is greater than $\del$.

\begin{proposition}\label{prop:d2graph_1}
If $x\in\cB$ and $y>f(x)+\Lam_0\del$, with $\Lam_0=\sqrt{1+\Lam^2}$, then
\begin{equation}
	\dist((x,y),\what F) > \del ;
\end{equation}
especially $(x,y)\notin G(\del)$.
\end{proposition}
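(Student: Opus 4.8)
The plan is to bound from below the distance from $(x,y)$ to an \emph{arbitrary} point $(x',f(x'))$ of $\what F$, and show this lower bound exceeds $\del$ whenever $y>f(x)+\Lam_0\del$. Write the squared distance as
\begin{equation}
	\dist((x,y),(x',f(x')))^2 = |x-x'|^2 + (y-f(x'))^2 .
\end{equation}
The two natural extreme cases are: $x'$ close to $x$, where the vertical term dominates and is controlled by $y-f(x)\ge\Lam_0\del$; and $x'$ far from $x$, where the horizontal term alone already exceeds $\del^2$. The Lipschitz bound on $f$ is what interpolates between them: $f(x')\le f(x)+\Lam|x-x'|$, so $y-f(x')\ge (y-f(x))-\Lam|x-x'|\ge \Lam_0\del-\Lam|x-x'|$.

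First I would treat the regime $|x-x'|\le \del/\Lam_0$. There, $\Lam|x-x'|\le (\Lam/\Lam_0)\del<\Lam_0\del$, so $y-f(x')\ge \Lam_0\del-\Lam|x-x'|>0$, and hence
\begin{equation}
	\dist((x,y),(x',f(x')))^2 \ge |x-x'|^2 + \bigrb{\Lam_0\del-\Lam|x-x'|}^2 .
\end{equation}
Setting $t=|x-x'|\in[0,\del/\Lam_0]$, I would minimize $\varphi(t)=t^2+(\Lam_0\del-\Lam t)^2$ over $t\ge 0$: its unconstrained minimizer is $t_\ast=\Lam\Lam_0\del/(1+\Lam^2)=\Lam\del/\Lam_0$, which indeed lies in the allowed interval, and $\varphi(t_\ast)=\Lam_0^2\del^2/(1+\Lam^2)=\del^2$. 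Since $\varphi$ is a convex parabola, $\varphi(t)\ge\del^2$ on the whole interval, with equality only at $t_\ast$; but equality would force $y-f(x')=\del$ with $x'\ne x$ along a specific direction, and a short argument (or simply enlarging the estimate slightly, using the \emph{strict} inequality $y>f(x)+\Lam_0\del$) gives the strict bound $\dist((x,y),(x',f(x')))>\del$. In the complementary regime $|x-x'|>\del/\Lam_0$, I would simply note $\dist((x,y),(x',f(x')))^2\ge|x-x'|^2>\del^2/\Lam_0^2$; since $\Lam_0=\sqrt{1+\Lam^2}\ge 1$ this only gives $>\del/\Lam_0$, which is not enough, so here I would instead combine both terms: keep $|x-x'|^2$ and also use $(y-f(x'))^2\ge 0$ is wasteful — better to observe that for $|x-x'|\ge \del$ we are done immediately, and for $\del/\Lam_0<|x-x'|<\del$ we again have $y-f(x')\ge\Lam_0\del-\Lam|x-x'|>\Lam_0\del-\Lam\del=(\Lam_0-\Lam)\del>0$, so the same parabola estimate $\varphi(|x-x'|)\ge\del^2$ continues to apply as long as $t<\Lam_0\del/\Lam$, which covers $t<\del$ since $\Lam_0>\Lam$. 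Taking the infimum over all $x'\in\hat\cB$ and using that $f$ is defined on all of $\hat\cB\supset\cB$ then yields $\dist((x,y),\what F)>\del$, whence $(x,y)\notin G(\del)$ by the defining equation \eqref{eq:G_del_set}.

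The only delicate point is the borderline equality case in the parabola minimization: the bare inequality $\varphi(t)\ge\del^2$ is not strict at $t=t_\ast$, so I must exploit that the hypothesis is the strict inequality $y>f(x)+\Lam_0\del$. Concretely, pick $\del'$ with $\del<\del'<(y-f(x))/\Lam_0$; the argument above with $\del$ replaced by $\del'$ gives $\dist((x,y),\what F)\ge\del'>\del$. This is the main (and essentially only) obstacle; everything else is the elementary quadratic optimization sketched above.
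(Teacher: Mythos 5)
Your proof is correct and follows essentially the same route as the paper's: both use the Lipschitz bound to reduce the squared distance to a single-variable quadratic whose minimum value is exactly $\del^2$ (the paper minimizes over the scaled vertical difference $\tau=|f(x)-f(\hat x)|/\Lam$ rather than the horizontal offset $t=|x-x'|$, which lets it avoid your case split and obtain strictness directly from $h>\Lam_0\del$ instead of via the $\del'$ trick). The only slip is your claim that $t_\ast=\Lam\del/\Lam_0$ lies in $[0,\del/\Lam_0]$, which holds only when $\Lam\le 1$; it is harmless, since the parabola's global minimum over all $t\ge 0$ is already $\del^2$.
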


\begin{proof}
Fix $x\in\cB$ and $h>h_0=\Lam_0\del$.
We show that $p=(x,f(x)+h)$ satisfies $\dist(p,\what F)>\del$.
If $\Lam=0$, then $f$ is constant and the conclusion is clear.
Suppose $\Lam>0$, and let $\hat x\in\hat\cB$, $\hat p=(\hat x,f(\hat x))$, and $\tau=|f(x) - f(\hat x)|/\Lam$.
Then,
\begin{equation}
\begin{aligned}
	\dist(p,\hat p)^2 & = | x-\hat x|^2 + (f(x)+h - f(\hat x))^2 \\
		& \ge \frac{1+\Lam^2}{\Lam^2}|f(x) - f(\hat x)|^2 - 2h|f(x) - f(\hat x)| +h^2 \\
		& =\rb{1+\Lam^2}\tau^2 -2h\Lam \tau+h^2 =:\psi(\tau) .
\end{aligned}
\end{equation}
Since the minimum of $\psi$ is achieved in
\begin{equation}
	\tau_* =\frac{h\Lam}{1+\Lam^2} ,
\end{equation}
we have
\begin{equation}
	\dist(p,\hat p)^2 \ge \psi(\tau_*) = h^2\rb{1-\frac{\Lam^2}{1+\Lam^2}}=\frac{h^2}{1+\Lam^2}
		> \frac{(1+\Lam^2)\del^2}{1+\Lam^2} = \del^2
\end{equation}
which yields the conclusion.
\end{proof}

As a result we have that for every $x\in\cB$, there exists $y>f(x)$ such that $(x,y)\in G(\del)$, and, in particular, we obtain an estimate of $y-f(x)$.

\begin{proposition}\label{prop:d2graph_ex}
For each $x\in\cB$, there exists $t\in[\del,\Lam_0\del]$, with $\Lam_0=\sqrt{1+\Lam^2}$, such that
\[
	(x,f(x)+t)\in G(\del) .
\]
\end{proposition}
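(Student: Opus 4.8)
The plan is to fix $x\in\cB$ and reduce the claim to a one-dimensional intermediate value argument. Consider the function
\[
	\phi(t) = \dist\bigrb{(x,\,f(x)+t),\ \what F} , \qquad t\ge 0 ;
\]
since the distance to a fixed nonempty set is a $1$-Lipschitz function of the point, $\phi$ is continuous. I want to find $t\in[\del,\Lam_0\del]$ with $\phi(t)=\del$: for such a $t$ we have $t\ge\del>0$, so $f(x)+t>f(x)$, and hence $(x,f(x)+t)$ satisfies all the conditions in \eqref{eq:G_del_set} defining $G(\del)$.

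First I would bracket the value $\del$. Because $x\in\cB\sbt\hat\cB$, the point $(x,f(x))$ lies on $\what F$, and the Euclidean distance from $(x,f(x)+\del)$ to $(x,f(x))$ is exactly $\del$; therefore $\phi(\del)\le\del$. At the other end, Proposition \ref{prop:d2graph_1} gives $\phi(t)>\del$ for every $t>\Lam_0\del$, so letting $t\to(\Lam_0\del)^+$ and using continuity yields $\phi(\Lam_0\del)\ge\del$. Thus $\phi(\del)\le\del\le\phi(\Lam_0\del)$, and the intermediate value theorem produces a $t\in[\del,\Lam_0\del]$ with $\phi(t)=\del$, as desired.

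I do not expect a genuine obstacle here; the two points deserving a word of care are (i) the continuity of $\phi$, which holds even if $\hat\cB$ is open and $\what F$ fails to be closed, since $\dist(\cdot,\what F)=\dist(\cdot,\ol{\what F})$ is $1$-Lipschitz in any case, and (ii) the passage from the strict inequality of Proposition \ref{prop:d2graph_1} to the non-strict bound $\phi(\Lam_0\del)\ge\del$, which is just the limit above. If one wishes to avoid even that limit, one can instead apply the intermediate value theorem on $[\del,\Lam_0\del+\sigma]$ for a small $\sigma>0$ and note that the resulting root cannot exceed $\Lam_0\del$, again by Proposition \ref{prop:d2graph_1}.
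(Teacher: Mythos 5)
Your proof is correct and follows essentially the same route as the paper: both apply the intermediate value theorem to $t\mapsto\dist\bigl((x,f(x)+t),\what F\bigr)$, using $\dist\bigl((x,f(x)+\del),\what F\bigr)\le\del$ at one end and Proposition \ref{prop:d2graph_1} at the other. Your treatment of the endpoint (passing from the strict inequality for $t>\Lam_0\del$ to $\phi(\Lam_0\del)\ge\del$, or equivalently working on $[\del,\Lam_0\del+\sigma]$) is just a slightly more explicit version of the paper's remark that the argument holds for every $h>\Lam_0\del$.
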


\begin{proof}
Let $\rho(t)=\dist((x,f(x)+t),\what F)$, $p=(x,f(x))$, and $h_0=\Lam_0\del$.
Since
\[
	\dist(p,(x,f(x)+\del))=\del ,
\]
we have $\rho(\del)\le\del$.
In addition, by Proposition \ref{prop:d2graph_1}, $\rho(h)>\del$, for $h>h_0$.
Since $\rho$ is continuous, there exists $t\in[\del,h)$ such that
\begin{equation}
	\dist((x,f(x)+t),\what F) =\rho(t)=\del
\end{equation}
Because the above is true of every $h>h_0$, we have the conclusion.
\end{proof}

The following proposition puts restrictions on $f$ in a neighborhood of a point $x\in\cB$, provided $p=(x,y)\in G(\del)$.
The idea of the proof is illustrated in the right frame of Figure~\ref{fig:f_B_prop:d_sphere}.
%
%
%
\begin{proposition}\label{prop:d_sphere}
Let $p=(x,y)\in G(\del)$, and $\hat x\in\hat\cB$.
\begin{enumerate}
\item
If $|\hat x-x| \le \del$, then
\begin{equation}\label{eq:f_x_del}
	f(\hat x) \le y-\sqrt{\del^2-|\hat x -x|^2}\, .
\end{equation}


\item
If $|\hat x-x| > \del\Lam/\Lam_0$, where $\Lam_0=\sqrt{\Lam^2+1}$, then
\begin{equation}\label{eq:f_far}
	f(\hat x) \le \Lam |\hat x-x| + y-\Lam_0\del .
\end{equation}
\end{enumerate}
\end{proposition}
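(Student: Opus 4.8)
The plan is to exploit the defining property of $G(\del)$: since $p=(x,y)\in G(\del)$, we have $\dist(p,\what F)=\del$, which means the open ball $B(p,\del)$ does not meet $\what F$, while $\ol{B(p,\del)}$ touches it. The two assertions are then essentially geometric consequences of the fact that $\what F$ avoids this ball, combined with the Lipschitz bound on $f$.

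For assertion 1, I would argue directly from $B(p,\del)\cap\what F=\emptyset$. Fix $\hat x\in\hat\cB$ with $|\hat x-x|\le\del$ and consider the vertical segment above $\hat x$. The point $(\hat x, \hat y)$ lies in $B(p,\del)$ precisely when $|\hat x-x|^2+(\hat y-y)^2<\del^2$, i.e. when $\hat y\in\bigl(y-\sqrt{\del^2-|\hat x-x|^2},\, y+\sqrt{\del^2-|\hat x-x|^2}\bigr)$. Since $\what F$ contains the point $(\hat x, f(\hat x))$ and cannot lie inside $B(p,\del)$, we must have $f(\hat x)\notin$ that open interval. It remains to rule out the upper branch $f(\hat x)\ge y+\sqrt{\del^2-|\hat x-x|^2}$; here I would use that $y$ was chosen with $y>f(x)$ and (via Proposition \ref{prop:d2graph_ex} or a continuity/connectedness argument along the vertical line through $x$, together with the Lipschitz bound controlling how much $f$ can rise over a horizontal distance $|\hat x -x|\le \del$) conclude $f(\hat x)$ must lie on or below the lower branch, giving \eqref{eq:f_x_del}.

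For assertion 2, the bound \eqref{eq:f_far} is just the Lipschitz estimate anchored at a suitable reference height. By Proposition \ref{prop:d2graph_ex}, for the given $x$ there is some $t\in[\del,\Lam_0\del]$ with $(x,f(x)+t)\in G(\del)$; comparing with $p=(x,y)\in G(\del)$ and using that $G(\del)$ will turn out to be a graph (or, more elementarily, that any two points of $G(\del)$ over the same $x$ with $y>f(x)$ must coincide, which follows from $\rho$ being eventually increasing past $\del$), I get $y=f(x)+t$ with $t\le\Lam_0\del$, hence $f(x)\ge y-\Lam_0\del$. Then Lipschitz continuity gives $f(\hat x)\le f(x)+\Lam|\hat x-x|\le \Lam|\hat x-x|+y-\Lam_0\del$, which is exactly \eqref{eq:f_far} — and in fact this estimate holds for all $\hat x$, the hypothesis $|\hat x-x|>\del\Lam/\Lam_0$ being the regime in which \eqref{eq:f_far} is the sharper of the two bounds (it is precisely where the bound from assertion 1 ceases to apply or to be useful).

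The main obstacle I anticipate is the bookkeeping in assertion 1: carefully justifying that $f(\hat x)$ sits on the \emph{lower} branch rather than above the ball. This needs the interaction between the Lipschitz constant $\Lam$ and the radius $\del$ — over a horizontal displacement of at most $\del$, $f$ can rise by at most $\Lam\del$, and one must check this is consistent only with the lower-branch alternative given the location of $p$ relative to the graph (recall $y>f(x)$ and, from Proposition \ref{prop:d2graph_1}, $y\le f(x)+\Lam_0\del$). Everything else is a routine application of the preceding propositions and the triangle/Lipschitz inequalities.
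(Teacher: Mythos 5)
Your treatment of assertion 2 contains a genuine error of direction, and it misses the key idea. From Proposition \ref{prop:d2graph_1} (or your uniqueness argument) you obtain $y\le f(x)+\Lam_0\del$, i.e.\ $f(x)\ge y-\Lam_0\del$; but the chain $f(\hat x)\le f(x)+\Lam|\hat x-x|\le \Lam|\hat x-x|+y-\Lam_0\del$ requires the \emph{reverse} bound $f(x)\le y-\Lam_0\del$, which is false in general. What $\dist(p,\what F)=\del$ and $y>f(x)$ actually give is only $f(x)\le y-\del$, so your argument proves at best $f(\hat x)\le \Lam|\hat x-x|+y-\del$, strictly weaker than \eqref{eq:f_far} whenever $\Lam>0$. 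The slip also shows in your closing remark that \eqref{eq:f_far} ``holds for all $\hat x$'': it does not. For $f\equiv 0$ regarded as $\Lam$-Lipschitz with $\Lam>0$, every point of $G(\del)$ has $y=f(x)+\del$, and at $\hat x=x$ the inequality \eqref{eq:f_far} would read $0\le(1-\Lam_0)\del<0$. The whole content of assertion 2 is that it improves on the Lipschitz estimate anchored at $x$, and that is exactly what the hypothesis $|\hat x-x|>\del\Lam/\Lam_0$ buys: the paper reduces to the plane through $p$, $(x,f(x))$ and $(\hat x,f(\hat x))$, applies assertion 1 at the tangency abscissa $x_1=x+\del\Lam/\Lam_0$ (note $|x_1-x|<\del$) to get $f(x_1)\le y-\del/\Lam_0$, and then uses the Lipschitz bound between $x_1$ and $\hat x$, which yields $f(\hat x)\le y-\del/\Lam_0+\Lam(\hat x-x_1)=\Lam(\hat x-x)+y-\Lam_0\del$. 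That anchoring at $x_1$ is the missing idea. (Also, invoking ``$G(\del)$ will turn out to be a graph'' would be circular, since Lemma \ref{lem:G_cone} and Theorem \ref{thm:dist_graph} rest on this proposition; the uniqueness is in any case unnecessary, because Proposition \ref{prop:d2graph_1} already gives $y\le f(x)+\Lam_0\del$ --- which is, however, the wrong-direction fact for your purpose.)

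For assertion 1, the mechanism you propose for excluding the upper branch is insufficient. The Lipschitz bound gives only $f(\hat x)\le f(x)+\Lam|\hat x-x|\le y-\del+\Lam|\hat x-x|$, and for $\Lam>1$ and $|\hat x-x|$ close to $\del$ this exceeds $y+\sqrt{\del^2-|\hat x-x|^2}$, so it does not prevent the graph from lying above the ball; Proposition \ref{prop:d2graph_ex} (existence of a point of $G(\del)$ over $x$) does not help, and the relevant continuity argument runs along the horizontal segment from $x$ to $\hat x$, not the vertical line through $x$. Concretely: if \eqref{eq:f_x_del} fails, then either $(\hat x,f(\hat x))$ lies in the open ball of radius $\del$ about $p$, contradicting $\dist(p,\what F)=\del$, or else $f(\hat x)>y$; in the latter case, since $f(x)\le y-\del<y$ and the segment from $x$ to $\hat x$ lies in $\hat\cB$, the intermediate value theorem produces a point $x_s$ strictly between $x$ and $\hat x$ with $f(x_s)=y$, and then $(x_s,y)\in\what F$ is at distance $|x_s-x|<|\hat x-x|\le\del$ from $p$, again a contradiction. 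This is what lies behind the paper's one-line justification (continuity of $f$, $f(x)<y$, and $\dist(p,\what F)=\del$), and it is the step your sketch does not secure.
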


\begin{proof}
1.\ \
Assertion 1 is true because $f$ is continuous, $f(x)<y$, and $\dist(p,\what F)=\del$.


2.\ \
We show \eqref{eq:f_far} by contradiction.
Suppose, that $\hat x$ does not satisfy \eqref{eq:f_far}.
By considering the plane containing the points $p=(x,y)$, $(x,f(x))$ and $(\hat x, f(\hat x))$ (note that they are indeed not collinear), we reduce the problem to the 2-dimensional case, where we may assume without loss of generality that $\hat x > x$.
For $x_1 = x+\del\Lam/\Lam_0$ we have $|x_1-x|=x_1-x =\del\Lam/\Lam_0<\del$ and therefore by \eqref{eq:f_x_del} and $\Lam_0^2=\Lam^2+1$,
\begin{equation}
	f(x_1) \le y-\sqrt{\del^2-(x-x_1)^2} = y -\frac{\del}{\Lam_0} .
\end{equation}
Thus, using $\Lam_0^2=\Lam^2+1$ and $x_1=x+\del\Lam/\Lam_0$ we obtain
\begin{equation}
\begin{aligned}
	f(\hat x)-f(x_1) & > \bigrb{\Lam(\hat x-x)+y-\Lam_0\del}-\rb{y-\frac{\del}{\Lam_0}} \\
		& = \Lam \rb{\hat x-x} -\del\, \frac{\Lam_0^2-1}{\Lam_0}
			= \Lam\rb{\hat x-x_1} ,
\end{aligned}
\end{equation}
which contradicts $f$ being $\Lam$-Lipschitz.
\end{proof}

\begin{figure}[t]
\centering
\includegraphics[height=4.0cm]{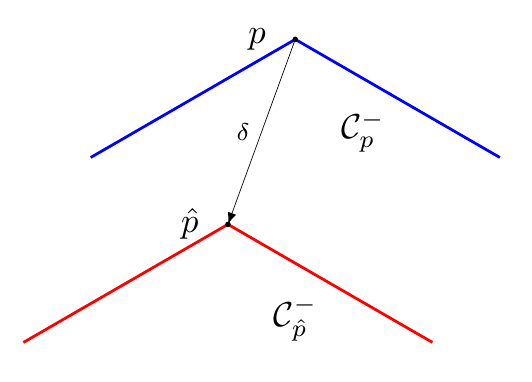}
\hfil
\includegraphics[height=4.0cm]{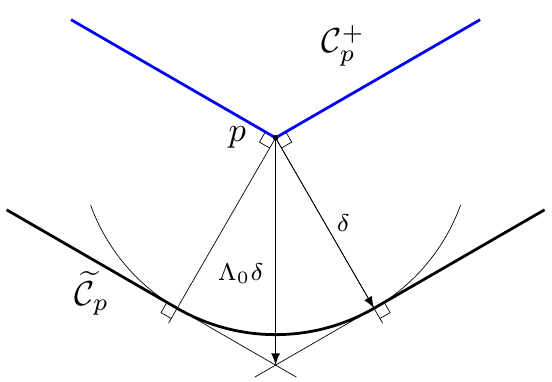}
\caption{Illustrations for the proof of Lemma \ref{lem:G_cone}; here $\cC_p^+$ and $\cC_p^-$ denote the upper and lower halves of the cone $\cC_p$, respectively.}
\label{fig:G_cone}
\end{figure}



As a result of Proposition \ref{prop:d_sphere} we have that if $p=(x,y)\in G(\del)$, then the graph $\what F$ of $f$ in $\hat\cB$ is a subset of
\begin{equation}\label{eq:smooth_cone}
\begin{aligned}
	\wt\cC_p
		& = \curb{(\hat x,\hat y)\in\hat\cB\times\R :\ |\hat x-x| \le \del \tfrac{\Lam}{\Lam_0},
				\ \hat y \le y -\sqrt{\del^2-|\hat x- x|^2}} \\
			&\quad \bigcup \curb{(\hat x,\hat y)\in\hat\cB\times\R :
				\ |\hat x-x| > \del \tfrac{\Lam}{\Lam_0}, \ \hat y \le \Lam|\hat x - x| +y-\Lam_0 \del} \\
		&=\curb{r\in\hat\cB\times\R :\ \dist\!\bigrb{r,\cC_p^+} \ge \del} ,
\end{aligned}
\end{equation}
illustrated in the right frame of Figure \ref{fig:G_cone}, where $\cC_p^+$ denotes the upper half of the cone $\cC_p$.
The second equality in \eqref{eq:smooth_cone} can be verified by using the radial symmetry of the sets about the vertical line $\hat x = x$ and then reducing the problem to the 2-dimensional case, similarly to the proof of Assertion 2 of Proposition \ref{prop:d_sphere}.
We use this observation to get the following.

\begin{lemma}\label{lem:G_cone}
If $p\in G(\del)$, then $\cC_p\cap G(\del)=\emptyset$.
\end{lemma}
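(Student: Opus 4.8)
The plan is to derive a contradiction from the assumption that $G(\del)$ contains two points $p$ and $q$ with $q\in\cC_p$, using the inclusion established in \eqref{eq:smooth_cone}: for every $r\in G(\del)$ the whole graph $\what F$ lies in $\wt\cC_r$, equivalently at distance at least $\del$ from the upper half-cone $\cC_r^+$. The only genuine subtlety is that I must squeeze a \emph{strict} inequality $\dist(q,\what F)>\del$ out of this, and that will come from the openness of $\cC_p^+$; everything else is bookkeeping.

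So, first I would suppose for contradiction that $p\in G(\del)$ and $q\in\cC_p\cap G(\del)$. Writing $\cC_p=\cC_p^+\cup\cC_p^-$ as the disjoint union of its open upper and lower halves, the central observation is that, because $\cC_0=-\cC_0$, one has $q\in\cC_p^-$ if and only if $p\in\cC_q^+$. Since $p$ and $q$ both lie in $G(\del)$ and play symmetric roles, I may therefore assume without loss of generality that $q\in\cC_p^+$.

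Next I would exploit that $\cC_p^+$ is open: there is $\tau>0$ with the open ball $B_\tau(q)\sbt\cC_p^+$. By \eqref{eq:smooth_cone} applied to the point $p\in G(\del)$, every $\hat p\in\what F$ satisfies $\dist(\hat p,\cC_p^+)\ge\del$; in particular $\hat p\notin B_\tau(q)$, so $|\hat p-q|>\tau$. Translating $q$ a distance $\tau$ along the segment toward $\hat p$ produces a point $q'\in B_\tau(q)\sbt\cC_p^+$ with $|\hat p-q'|=|\hat p-q|-\tau$, and hence
\[
	|\hat p-q| \;=\; |\hat p-q'|+\tau \;\ge\; \dist(\hat p,\cC_p^+)+\tau \;\ge\; \del+\tau .
\]
As this bound is uniform over $\hat p\in\what F$, it gives $\dist(q,\what F)\ge\del+\tau>\del$, contradicting $q\in G(\del)$; this proves $\cC_p\cap G(\del)=\emptyset$ whenever $p\in G(\del)$.

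I expect no serious obstacle here, since the hard geometric content is already packaged in Proposition \ref{prop:d_sphere} and the resulting inclusion \eqref{eq:smooth_cone}. The two points that require a little care are the symmetry reduction between $\cC_p^-$ and $\cC_q^+$ (so that a single argument covers both halves of the cone) and the passage from $\dist(\hat p,\cC_p^+)\ge\del$ to the strict estimate $\dist(q,\what F)>\del$, for which the openness of the half-cone is essential. Together with Propositions \ref{prop:d2graph_1} and \ref{prop:d2graph_ex}, this lemma then yields that $G(\del)$ is the graph over $\cB$ of a $\Lam$-Lipschitz function.
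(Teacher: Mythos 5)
Your proof is correct, and it takes a genuinely different route from the paper's for half of the argument. The paper splits into two cases and handles them with different mechanisms: for $r\in\cC_p^+$ it invokes \eqref{eq:smooth_cone} (exactly as you do), while for $r\in\cC_p^-$ it runs a separate translation argument, picking a nearest point $\hat p\in\what F$ with $\dist(p,\hat p)=\del$, noting $\cC_{\hat p}^-$ lies below $\what F$, writing $\cC_p^-=p-\hat p+\cC_{\hat p}^-$, and concluding $\dist(r,\what F)<\del$ for $r$ interior to $\cC_p^-$. Your observation that $\cC_0=-\cC_0$ gives $q\in\cC_p^-\iff p\in\cC_q^+$, so that swapping the roles of $p$ and $q$ (both in $G(\del)$) reduces the lower-half case to the upper-half one, eliminates the translation argument entirely and yields a single unified proof. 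This is cleaner and arguably better exposes what is really used, namely only the inclusion $\what F\sbt\wt\cC_p$. You also spell out the strict inequality $\dist(q,\what F)>\del$ via openness of $\cC_p^+$, which the paper merely asserts is ``clear''.

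One small technical nit: after choosing $\tau$ with $B_\tau(q)\sbt\cC_p^+$, translating $q$ by \emph{exactly} $\tau$ toward $\hat p$ lands on $\p B_\tau(q)$, which need not lie in the open ball. This is harmless — translate by any $\tau'<\tau$ to get $q'\in B_\tau(q)$ with $|\hat p-q'|=|\hat p-q|-\tau'$, so $|\hat p-q|\ge\del+\tau'$ and hence $\dist(q,\what F)\ge\del+\tau'>\del$; alternatively, note directly that $\del\le\dist(\hat p,\cC_p^+)\le\inf_{s\in B_\tau(q)}|\hat p-s|=|\hat p-q|-\tau$. Either fix preserves the argument unchanged.
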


\begin{proof}
We show separately the two propositions $\cC_p^\pm\cap G(\del)=\emptyset$, for the upper and lower parts $\cC_p^\pm$ of the cone $\cC_p$.

1.\ \
Consider the lower part $\cC_p^-$ of the cone $\cC_p$.
Since $p=(x,y)\in G(\del)$, there exists $\hat p=(\hat x,f(\hat x))\in\what F$ such that $\dist(p,\hat p)=\del$ and $y>f(\hat x)$ (by Proposition \ref{prop:d_sphere}).
Since $f$ is $\Lam$-Lipschitz and $\what F$ is the graph of $f$ in $\hat\cB$, we have $\cC_{\hat p}^-\cap \what F=\emptyset$, and in particular $\cC_{\hat p}^-$ lies below $\what F$.
However, the lower part $\cC_{p}^-$ of $\cC_{p}$ is given by $\cC_{p}^-=p-\hat p+\cC_{\hat p}^-$.
Since the length of $p-\hat p$ is $\del$, we have that every point $r$ in the interior of $\cC_{p}^-$ is at a distance of $\del$ from a point in the interior of $\cC_{\hat p}^-$, which yields $\dist(r,\what F)<\del$ and thus $r\notin G(\del)$.
Since $r\in\cC_p^-$ is arbitrary, we get $\cC_p^-\cap G(\del)=\emptyset$.

2.\ \
Now consider the upper part $\cC_p^+$ of the cone $\cC_p$, and let $r\in\cC_p^+$.
In this case, illustrated in the right frame of Figure~\ref{fig:G_cone}, it is clear that $\dist(r,\what F)>\del$, since $r\in\cC_p^+$ and $\what F\subset\wt\cC_p$ given by \eqref{eq:smooth_cone}.
\end{proof}

\begin{theorem}\label{thm:dist_graph}
For $\del>0$, the set $G(\del)$ is the graph of a $\Lam$-Lipschitz function $g:\cB\to\R$.
\end{theorem}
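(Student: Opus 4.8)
The plan is to split the statement into two parts: first that $G(\del)$ is the graph of a well-defined function $g:\cB\to\R$, and then that this $g$ is $\Lam$-Lipschitz. Both parts follow directly from the preceding propositions together with Lemma~\ref{lem:G_cone}, so the argument will be short. For the first part I would fix $x\in\cB$. Existence of a point of $G(\del)$ above $x$ is precisely Proposition~\ref{prop:d2graph_ex}, which even localizes it to height $f(x)+t$ with $t\in[\del,\Lam_0\del]$. For uniqueness, suppose $(x,y_1),(x,y_2)\in G(\del)$ with $y_1<y_2$; then $(x,y_2)-(x,y_1)=(0,y_2-y_1)$ with $y_2-y_1>0=\Lam\cdot 0$, so $(x,y_2)\in\cC_{(x,y_1)}$ by the definition of $\cC_0$. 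Since $(x,y_1)\in G(\del)$, Lemma~\ref{lem:G_cone} gives $\cC_{(x,y_1)}\cap G(\del)=\emptyset$, contradicting $(x,y_2)\in G(\del)$. Hence there is a unique height $g(x)$ with $(x,g(x))\in G(\del)$, and $G(\del)=\operatorname{graph}(g)$.

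For the second part I would invoke the characterization recalled just before Proposition~\ref{prop:d2graph_1}: a function $g:\cB\to\R$ is $\Lam$-Lipschitz if and only if $\cC_p\cap\operatorname{graph}(g)=\emptyset$ for every $p\in\operatorname{graph}(g)$. Since $\operatorname{graph}(g)=G(\del)$, Lemma~\ref{lem:G_cone} says exactly that this holds: $\cC_p\cap G(\del)=\emptyset$ for every $p\in G(\del)$. Therefore $g$ is $\Lam$-Lipschitz (and in particular continuous, so no separate regularity argument is needed), which completes the proof.

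I do not expect a genuine obstacle here: all the geometric content — the two-sided exclusion of the cone, obtained via Proposition~\ref{prop:d_sphere} and the ``smooth cone'' set $\wt\cC_p$ — has already been absorbed into Lemma~\ref{lem:G_cone}, and the only things left to observe are the trivial fact that the vertical direction lies in $\cC_0$ (used for uniqueness) and the direct application of the cone characterization of Lipschitz graphs. The one point worth a remark is that $\del>0$ small ensures the points furnished by Proposition~\ref{prop:d2graph_ex} have $x\in\cB$ and stay in the region where $\what F$ is the relevant graph, but this is already built into the cited statements.
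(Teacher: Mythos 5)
Your proof is correct and follows essentially the same approach as the paper: existence via Proposition~\ref{prop:d2graph_ex}, uniqueness and the Lipschitz bound both via Lemma~\ref{lem:G_cone} together with the cone characterization of Lipschitz graphs. The only difference is that you spell out the uniqueness step (the vertical direction lies in $\cC_0$), which the paper leaves implicit.
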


\begin{proof}
By Proposition \ref{prop:d2graph_ex} and Lemma \ref{lem:G_cone}, for each $x\in\cB$, there exists a unique $y$ such that $p=(x,y)\in G(\del)$.
This defines a function $g:\cB\to\R$ such that $G(\del)$ is its graph.
Moreover, by Lemma \ref{lem:G_cone}, for each $p\in G(\del)$, $G(\del)\cap\cC_p=\emptyset$, which yields that $g$ is $\Lam$-Lipschitz.
\end{proof}

\subsection{Distance functions for Lipschitz domains}


For $r>0$, let $B(r)$ denote the open ball in $\R^{d-1}$ of radius $r$ centered at the origin.

\begin{proof}[Proof of Theorem \ref{thm:dist_set}]
Since $A$ is $\Lam$-Lipschitz and $\p A$ is bounded, there is a finite set of pairs $(V_n,f_n)$, with $n=1,\ldots,N$, of bounded open right cylinders $V_n$ and functions $f_n$ of $d-1$ variables satisfying the following:
\begin{enumerate}
\item
$\{V_n\}_n$ is a finite open cover of $\p A$,

\item
the bases of $V_n$ are at a positive distance from $\p A$,

\item
$f_n$ is $\Lam$-Lipschitz, and $f_n(0)=0$,

\item
for each $n$, there exists a Cartesian coordinate system $(\xi,\eta)$, with $\xi\in\R^{d-1}$ and $\eta\in\R$, for which
\begin{equation}
	V_n = B(r_n)\times (-b_n,b_n) ,
\end{equation}
for some $r_n,b_n>0$, and
\begin{equation}
	A\cap \hat V_n =\curb{(\xi,\eta) :\ \xi\in B(2r_n),\ f_n(\xi)<\eta<b_n } ,
	\qquad
	\hat V_n=B(2r_n)\times (-b_n,b_n) .
\end{equation}
\end{enumerate}

Choose $\del>0$ such that
\begin{equation}
	\p A_\del \sbt \bigcup_{n=1}^N V_n
\end{equation}
and for all $n$, with respect to the $n$-th coordinate system $(\xi,\eta)$, the part of the boundary of $A_\del$ lying in $V_n$ coincides with the set $G_n(\del)=G(\del)$ given by \eqref{eq:G_del_set} with $f=f_n$, $\cB=B(r_n)$, $\hat\cB=B(2r_n)$.
By Theorem \ref{thm:dist_graph}, $G_n(\del)$ is the graph of a $\Lam$-Lipschitz function $g_n:B(r_n)\to\R$.
Thus, the boundary $\p A_\del$ of $A_\del$ is covered by a finite collection of open sets $V_n$, such that for each $n$ there exists a coordinate system $(\xi,\eta)$ in which $\p A_\del\cap V_n$ coincides with the graph of the $\Lam$-Lipschitz function $g_n$ and $A_\del\cap V_n$ lies above $g_n$, which yields the conclusion.
\end{proof}

\section{Estimates in thin sets}\label{sec:thin_domains}

We show the following theorem.

\begin{theorem}\label{thm:gen_est_U}
If $A\sbt\Om$ is a $\Lam$-Lipschitz domain, then there exists a constant $C>0$, such that for every sufficiently small $\del>0$ and every $v\in H^1(\Om)$,
\begin{equation}
	\|v\|_{L^2(U_\del)}^2
		\le C\rb{\del^{2}\|\nabla v\|_{L^2(U_\del)}^2
			+\del\|v\|_{H^1(A_\del)}^2} ,
\end{equation}
where
\begin{equation}
	U_\del = \curb{x\in A:\ \dist(x,\p A)<\del } ,
	\qquad
	A_\del = A\setminus \ol{U_\del} .
\end{equation}
\end{theorem}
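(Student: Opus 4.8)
The plan is to reduce the global estimate to a local one on each cylinder $V_n$ of a fixed finite covering of $\partial A$, using that $A$ is $\Lambda$-Lipschitz. On each such cylinder, in the adapted coordinates $(\xi,\eta)$ with $\xi\in B(r_n)$, the boundary piece $\partial A\cap V_n$ is the graph of a $\Lambda$-Lipschitz function $f_n$, and — by Theorem~\ref{thm:dist_graph} — the inner level set $\partial A_\delta\cap V_n$ is the graph of another $\Lambda$-Lipschitz function $g_n$ with $f_n\le g_n\le f_n+\Lambda_0\delta$, $\Lambda_0=\sqrt{1+\Lambda^2}$ (this is exactly Proposition~\ref{prop:d2graph_ex} together with Theorem~\ref{thm:dist_graph}). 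Thus, up to a bounded bilipschitz flattening of the $\xi$-variable, the relevant slab $U_\delta\cap V_n$ is comparable to $\{(\xi,\eta):\ \xi\in B(r_n),\ g_n(\xi)-c\delta<\eta<g_n(\xi)\}$, i.e.\ a graph layer of height $\mathcal{O}(\delta)$ lying immediately below the Lipschitz surface $\eta=g_n(\xi)$, while $A_\delta\cap V_n$ contains a comparable layer $\{g_n(\xi)-2c\delta<\eta<g_n(\xi)-c\delta\}$ just below it.

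The core of the argument is the following one-dimensional trace-type inequality, applied on vertical segments. For $v\in H^1$ of the strip and a.e.\ $\xi$, write, for $\eta$ in the thin layer and $\eta'$ in the comparison layer below,
\begin{equation}
	v(\xi,\eta) = v(\xi,\eta') + \int_{\eta'}^{\eta} \partial_\eta v(\xi,s)\, ds ,
\end{equation}
so that $|v(\xi,\eta)|^2 \le 2|v(\xi,\eta')|^2 + 2\,(\eta-\eta')\int |\partial_\eta v(\xi,s)|^2\,ds \le 2|v(\xi,\eta')|^2 + C\delta\int_{\text{column}}|\nabla v|^2$. Averaging $\eta'$ over the comparison layer of thickness $\sim\delta$ and integrating $\eta$ over the thin layer of thickness $\sim\delta$ gives, for each fixed $\xi$,
\begin{equation}
	\int_{\text{thin column}} |v(\xi,\eta)|^2\, d\eta
		\ \le\ C \int_{\text{comparison column}} |v(\xi,\eta')|^2\, d\eta'
			+ C\delta^2 \int_{\text{full column}} |\nabla v(\xi,s)|^2\, ds .
\end{equation}
Now integrate in $\xi$ over $B(r_n)$: the first term is bounded by $C\|v\|_{L^2(A_\delta\cap \hat V_n)}^2\le C\|v\|_{H^1(A_\delta)}^2$ — but to obtain the sharper factor $\delta\|v\|_{H^1(A_\delta)}^2$ claimed in the statement, one instead fixes a slab of \emph{fixed} width (independent of $\delta$) below the surface, applies a standard trace inequality on it to control $\|v\|_{L^2(\text{thin slab of width }\delta)}^2 \le C\delta\,\|v\|_{H^1(\text{fixed slab})}^2 \le C\delta\|v\|_{H^1(A_\delta)}^2$, and combines this with the $\delta^2\|\nabla v\|_{L^2(U_\delta)}^2$ term coming from the Newton–Leibniz remainder over $U_\delta$ itself. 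Summing over the finitely many cylinders $n=1,\dots,N$ and absorbing the (harmless) interior part of $A$ finishes the estimate; the constant depends only on $\Lambda$, $N$ and the geometry of the covering, all independent of $\delta$.

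The main obstacle is bookkeeping the geometry: one must check that for $\delta$ small the thin set $U_\delta$ is genuinely contained in the union of the $V_n$ (so the interior of $A$ contributes nothing), that the level surface $\partial A_\delta\cap V_n$ really is the graph $g_n$ — which is where Theorems~\ref{thm:dist_graph} and (for the domain case) \ref{thm:dist_set} are invoked — and that the bilipschitz flattening used to straighten $g_n$ distorts both $L^2$ norms and gradients by $\delta$-independent constants. Once the coordinates are set up, the analytic content is just the elementary one-dimensional estimate above plus Fubini; I would write that part quickly and spend the care on the covering and the claim that $f_n\le g_n\le f_n+\Lambda_0\delta$ uniformly in $n$.
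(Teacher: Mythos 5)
Your proposal is sound and, structurally, it is the same proof as the paper's: a finite covering of $\p A$ by Lipschitz-graph cylinders, a local estimate on each cylinder whose right-hand side has exactly the two-term form $\del^2\|\nabla v\|_{L^2(U_\del)}^2+\del\,\|v\|_{H^1(S)}^2$ with $S\sbt A_\del$ a slab of width bounded below independently of $\del$, and summation over the cover. Where you differ is in how the local estimate is obtained: the paper derives it from the Poincaré/Robin-type inequality on graph cylinders (Lemma \ref{lem:Poi_f-cyl}), proved via the flattening operator of \cite{10.2307/2161378} and an eigenvalue bound on the straightened cylinder, and then bounds the trace term by $\|v\|_{H^1(\wt V)}$ with $\wt V\sbt A_\del$; you instead integrate along vertical segments (Newton--Leibniz plus Fubini), which is more elementary and avoids the flattening machinery. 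You also invoke Theorem \ref{thm:dist_graph} to represent $\p A_\del\cap V_n$ as the graph of $g_n$, whereas the paper's proof gets by with Proposition \ref{prop:d2graph_1} alone, working with the layer of height $\sqrt{1+\Lam^2}\,\del$ above the graph of $f_n$, which contains $U_\del\cap V_n$ -- a small economy, but both work. Two points of care when writing it up: the displayed claim $\|v\|_{L^2(\text{thin slab of width }\del)}^2\le C\del\,\|v\|_{H^1(\text{fixed slab})}^2$ is false if read on its own with the fixed slab inside $A_\del$ (the two sets are disjoint, so $v$ could be supported in $U_\del$); it becomes correct only in the combined form you indicate, i.e.\ after splitting the vertical path at $\p A_\del$, so that the portion crossing $U_\del$ has length at most $\sqrt{1+\Lam^2}\,\del$ -- this split is precisely what produces the essential factor $\del^2$, rather than merely $\del$, in front of $\|\nabla v\|_{L^2(U_\del)}^2$ -- while the transported value is controlled by averaging over, or taking the trace on, the fixed-width band inside $A_\del$ (with a constant uniform in $\del$ because the graphs are uniformly $\Lam$-Lipschitz). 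Also, your comparison layer should lie on the $A_\del$ side of $\p A_\del$ (in your orientation, above the graph of $g_n$), not ``below'' it. With these adjustments the argument closes, with constants depending only on $\Lam$ and the fixed covering, exactly as in the paper.
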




We begin by citing some results of \cite{10.2307/2161378} regarding the flattening of Lipschitz graphs.
Let $V$ be a bounded domain such that $V\sbt \cB\times \R$, with $\cB\sbt\R^{d-1}$ an open ball, and let $f:\cB\to\R$ $\Lam$-Lipschitz.
We define $Y:V \longrightarrow Y(V)$ by
\begin{equation}
	Y(x) = (\hat x,x_d-f(\hat x))
	\qquad
	x=(\hat x,x_d)\in\cB\times \R .
\end{equation}
Note that the graph of $f$ is mapped by $Y$ to the flat surface $\cB\times\{0\}$.
It is easy to verify that
\begin{equation}
	\bigg|\det \frac{\p Y}{\p x} \bigg|=1 ,
\end{equation}
and that $Y$ is invertible and
\begin{equation}
	Y^{-1}(\hat y,y_d)=(\hat y,y_d+f(\hat y)) .
\end{equation}
We define
\begin{equation}
	T:H^1(V) \longrightarrow H^1(Y(V))
	\qquad
	Tu(y)=u\bigrb{Y^{-1}(y)} .
\end{equation}
The operator $T$ is well defined \cite{10.2307/2161378}, i.e., for every $u\in H^1(V)$, $Tu\in H^1(Y(V))$.
For any summable $g:V\to\R$, by the area formula we have
\begin{equation}
	\int_V g(x)d x =\int_{Y(V)} g(Y^{-1}(y)) dy .
\end{equation}
Therefore,
\begin{equation}\label{eq:T_L2_isometry}
	\|Tu\|_{L^2(Y(V))} = \|u\|_{L^2(V)} .
\end{equation}
We also have \cite{10.2307/2161378}
\begin{equation}\label{eq:T_H1_seminorm}
	\|\nabla(Tu)\|_{L^2(Y(V))} \le C \|\nabla u\|_{L^2(V)} ,
\end{equation}
where $C$ is independent of $u$ and therefore $T$ is continuous from $H^1(V)$ to $H^1(Y(V))$.
If
\[
	\G =\curb{(\hat x,f(\hat{x})) :\ \hat{x}\in\cB }\sbt\p V
\]
then there exists $C>0$ such that for every $u\in H^1(V)$
\begin{equation}\label{eq:trace_T}
	\|Tu\|_{L^2(Y(\G))} \le \|u\|_{L^2(\G)} \le C \|Tu\|_{L^2(Y(\G))} .
\end{equation}

Next we derive Poincaré-type inequalities for functions in cylinders bounded by Lipschitz graphs.
Specifically, we are interested in the behavior of the constants of the inequalities with respect to the height of the cylinder.

\begin{lemma}\label{lem:Poi_f-cyl}
Let $f:\cB\to\R$ be $\Lam$-Lipschitz and for $h>0$ let
\[
	\cC_h=\curb{(\hat x,x_d) :\ \hat x\in \cB,\ |x_d -f(\hat x)| < h } ,
	\qquad
	\G_h = \curb{(\hat x,f(\hat x)+h) :\ \hat x\in \cB }.
\]
There exists a constant $C>0$, such that for every $h>0$, and $v\in H^1(\cC_h)$,
\begin{equation}\label{eq:robin_est}
	C \|v\|_{L^2(\cC_h)}^2 \le h^2\|\nabla v\|_{L^2(\cC_h)}^2+h\, \|v\|_{L^2(\G_h)}^2 .
\end{equation}
\end{lemma}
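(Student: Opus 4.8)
The plan is to flatten the cylinder with the change of variables $Y$ introduced above, prove the estimate on the resulting flat cylinder by the fundamental theorem of calculus, and then transfer the inequality back to $\cC_h$ using the three properties \eqref{eq:T_L2_isometry}, \eqref{eq:T_H1_seminorm}, \eqref{eq:trace_T} of the operator $T$. The key point that makes the whole argument work is that $Y$ (and hence $T$) is defined on all of $\cB\times\R$ independently of $h$: consequently every constant produced by the transfer step depends only on $\Lam$ (and on $\cB$), never on $h$, so the entire $h$-dependence of \eqref{eq:robin_est} is generated in the flat-cylinder estimate, where I can write it down explicitly.

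First I would record the flat version. Put $Q_h = \cB\times(-h,h)$, so that $Y$ maps $\cC_h$ onto $Q_h$ and the graph $\G_h$ (the graph over $\cB$ of the $\Lam$-Lipschitz function $f+h$) onto $\cB\times\{h\}$. The claim there is that for every $w\in H^1(Q_h)$,
\[
	\|w\|_{L^2(Q_h)}^2 \ \le\ 8h^2\|\nabla w\|_{L^2(Q_h)}^2 \ +\ 4h\,\|w\|_{L^2(\cB\times\{h\})}^2 .
\]
By density of $C^\infty(\overline{Q_h})$ in $H^1(Q_h)$ and continuity of the trace on $\partial Q_h$, it suffices to prove this for smooth $w$. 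For such $w$, $\hat y\in\cB$ and $y_d\in(-h,h)$, writing $w(\hat y,y_d)=w(\hat y,h)-\int_{y_d}^{h}\partial_d w(\hat y,s)\,ds$, estimating the integral over $(-h,h)$ by Cauchy--Schwarz, and using $(a+b)^2\le 2a^2+2b^2$ gives $|w(\hat y,y_d)|^2\le 2|w(\hat y,h)|^2+4h\int_{-h}^{h}|\partial_d w(\hat y,s)|^2\,ds$; integrating first in $y_d$ over the interval of length $2h$ and then in $\hat y$ over $\cB$, and bounding $|\partial_d w|\le|\nabla w|$, yields the stated inequality with the explicit, $h$-independent constants $4$ and $8$.

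Next I would transfer. Given $v\in H^1(\cC_h)$, set $w=Tv\in H^1(Q_h)$. Then \eqref{eq:T_L2_isometry} gives $\|v\|_{L^2(\cC_h)}=\|w\|_{L^2(Q_h)}$; \eqref{eq:T_H1_seminorm} gives $\|\nabla w\|_{L^2(Q_h)}\le C\|\nabla v\|_{L^2(\cC_h)}$; and \eqref{eq:trace_T} — applied with $f$ replaced by $f+h$, which is legitimate since $f+h$ is $\Lam$-Lipschitz with the same gradient as $f$, so the area-formula computation underlying \eqref{eq:trace_T} is unchanged and $Y$ still maps the graph $\G_h$ bi-Lipschitzly onto $\cB\times\{h\}$ — gives $\|w\|_{L^2(\cB\times\{h\})}\le\|v\|_{L^2(\G_h)}$. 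Substituting these three relations into the flat estimate of the previous paragraph yields
\[
	\|v\|_{L^2(\cC_h)}^2 \ \le\ 8C^2 h^2\,\|\nabla v\|_{L^2(\cC_h)}^2 \ +\ 4h\,\|v\|_{L^2(\G_h)}^2 ,
\]
which is \eqref{eq:robin_est} with the constant $1/\max\{8C^2,4\}$, where $C$ is the constant from \eqref{eq:T_H1_seminorm}.

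The argument is essentially routine; the one point I would be careful about is precisely the uniformity of the constant in \eqref{eq:T_H1_seminorm} (and the factor in \eqref{eq:trace_T}) over the family $\{\cC_h\}_{h>0}$. As noted in \cite{10.2307/2161378}, this constant depends only on the Lipschitz constant of $f$ and not on the particular bounded subdomain of $\cB\times\R$, so it is the same for all $h$; this is what allows the $h$-powers in \eqref{eq:robin_est} to come out sharp. The secondary bookkeeping point — that \eqref{eq:trace_T} applies to the shifted top graph $\G_h$ rather than to the graph of $f$ — is handled exactly as indicated above, by the invariance of $\nabla f$ under a vertical shift.
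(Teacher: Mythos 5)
Your proposal is correct and follows essentially the same route as the paper: flatten $\cC_h$ onto the right cylinder $\cB\times(-h,h)$ with the map $Y$ and the operator $T$, prove the estimate there, and transfer it back using \eqref{eq:T_L2_isometry}, \eqref{eq:T_H1_seminorm} and \eqref{eq:trace_T}, with all constants depending only on $\Lam$ and hence uniform in $h$. The only difference is minor: you prove the flat-cylinder inequality elementarily via the fundamental theorem of calculus and Cauchy--Schwarz with explicit constants, whereas the paper invokes the standard bound for the smallest eigenvalue of a Robin--Neumann problem on $\cB\times(-h,h)$; you also make explicit (correctly) that \eqref{eq:trace_T} must be applied to the shifted graph of $f+h$, a point the paper leaves implicit.
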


\begin{proof}
Fix $h>0$ and let $\cC=\cC_h$ and $\G=\G_h$.
%
The estimate for $f\equiv 0$ follows easily from standard estimates for the smallest eigenvalue $\lam$ of the problem
\begin{align}
	& -\Del u = \lam u \qquad \text{in}\quad \cC \\
	& \p_n u=-h^{-1}u \qquad \text{on}\quad \G \\
	& \p_n u=0 \qquad \text{on}\quad \p\cC\setminus\G .
\end{align}
Suppose $f$ is $\Lam$-Lipschitz.
Then $Y(\cC)=\cB\times (-h,h)$, and $Y(\G)=\cB\times \{h\}$.
Since $Y(\cC)$ is a standard right cylinder, we get
\begin{equation}
	\| T v\|_{L^2(Y(\cC))}^2 \le C\rb{h^2\|\nabla (Tv)\|_{L^2(Y(\cC))}^2+h\|T v\|_{L^2(Y(\G))}^2} .
\end{equation}
Due to \eqref{eq:T_L2_isometry}, \eqref{eq:T_H1_seminorm} and \eqref{eq:trace_T} we get the conclusion.
\end{proof}

We now can prove Theorem \ref{thm:gen_est_U}

\begin{proof}[Proof of Theorem \ref{thm:gen_est_U}]
Let $v\in H^1(\Om)$.
Fix $x\in\p A$.
Since $A$ is bounded and $\Lam$-Lipschitz, there exists a cylinder $\cC$ and a $\Lam$-Lipschitz function $f$ of $d-1$ variables such that $f(0)=0$, the bases of $\cC$ are at a positive distance from $\p A$, and there exists a Cartesian coordinate system $(\xi,\eta)$, with $\xi\in\R^{d-1}$ and $\eta\in\R$, in which
\begin{equation}
	\cC = B(r)\times (-b,b) ,
\end{equation}
for $r,b>0$, $B(r)\in\R^{d-1}$ the ball of radius $r$ centered at zero and
\begin{equation}
	A\cap\cC = \curb{ (\xi,\eta) :\ \xi\in B(r),\ f(\xi) <\eta <b } .
\end{equation}

For $\ka>0$, let $V_\ka$ denote
\begin{equation}
	V_\ka =\curb{(\xi,\eta) :\ \xi\in B(r),\ 0< \eta -f(\xi) < \ka } .
\end{equation}
Choose $\del_0>0$ such that $\ka_0=2\del_0\sqrt{1+\Lam^2}< b$, and set $\ka=\Lam\del$, for $\del<\del_0$.
Then, by Proposition \ref{prop:d2graph_1} we have
\begin{equation}
	U_\del\cap V_\ka = U_\del\cap \cC
\end{equation}
and
\begin{equation}
	\wt{V} =\curb{(\xi,\eta) :\ \xi\in B(r),\ \ka <\eta -f(\xi) < \ka_0 } \sbt A_\del .
\end{equation}
Lemma \ref{lem:Poi_f-cyl} yields
\begin{equation}
	C \|v\|_{L^2(V_\ka)}^2 \le \ka^2 \|\nabla v\|_{L^2(V_\ka)}^2 +\ka \|v\|_{L^2(\G)}^2
\end{equation}
where
\[
	\G=\curb{(\xi,f(\xi)+\ka) :\ \xi\in B(r) } .
\]
Since $\ka$ is bounded at a positive distance below $\ka_0$, we have
\begin{equation}
	\|v\|_{L^2(\G)}^2 \le C\|v\|_{H^1(\wt{V})}^2 .
\end{equation}
Combining the above we obtain
\begin{equation}\label{eq:es_U_1}
	C_1 \|v\|_{L^2(V_{\ka})}^2
		\le \ka^{2}\|\nabla v\|_{L^2(V_{\ka})}^2 +\ka\|v\|_{H^1(\wt{V})}^2
\end{equation}

Since $V_{\ka}\sbt \cC\cap A$, we have
\begin{equation}
	\|\nabla v\|_{L^2(V_{\ka})}^2 \le 
			\|\nabla v\|_{L^2(\cC\cap A)}^2
		= \|\nabla v\|_{L^2(\cC\cap U_{\del})}^2
			+\|\nabla v\|_{L^2(\cC\cap A_\del)}^2
\end{equation}
Substituting this into \eqref{eq:es_U_1} and using $\wt{V}\sbt \cC\cap A_\del$ yields
\begin{equation}
	C\|v\|_{L^2(V_{\ka})}^2
		\le \del^{2}\|\nabla v\|_{L^2(U_\del)}^2
			+\del(1+\del)\|v\|_{H^1(\cC\cap A_\del)}^2 .
\end{equation}
Since $\p A$ is compact, we can cover it by a finite number of neighborhoods $\cC$, independent of $\del$ and thus obtain
\begin{equation}
	C\|v\|_{L^2(U_\del)}^2
		\le \del^{2}\|\nabla v\|_{L^2(U_\del)}^2
			+\del(1+\del)\|v\|_{H^1(A_\del)}^2
\end{equation}
which completes the proof
\end{proof}

\bibliographystyle{unsrt}		
\bibliography{ASI_bib, main-adaptive-eigenspace, NumericalExamples}


\end{document}